\newcommand{\R}{\mathbb{R}}
\newcommand{\Rd}{\mathbb{R}^{n_1 \times \ldots \times n_d}}
\newcommand{\bigo}{\mathcal{O}}
\newcommand{\N}{\mathcal{N}}
\let\P\undefined
\newcommand{\P}{\mathcal{P}}
\newcommand{\calS}{\mathcal{S}}
\newcommand{\M}{\mathcal{M}}
\newcommand{\hf}{{\hat f}}
\newcommand{\rf}{{\rm f}}
\newcommand{\rmq}{{\rm q}}
\newcommand{\rp}{{\rm p}}
\newcommand{\tf}{{\tilde \phi}}
\newcommand{\tF}{{\tilde \Phi}}
\newcommand{\tS}{{\tilde S}}
\newcommand{\barf}{\bar f}
\newcommand{\bary}{\bar Y}
\newcommand{\bare}{\bar \epsilon}
\newcommand{\rw}{{\rm w}}
\newcommand{\ttimes}[1]{{\times}_{#1}}
\newcommand{\rem}{{\rm rem}}
\newcommand{\TV}{{\rm TV}}
\newcommand{\tv}{{\tilde v}}
\DeclarePairedDelimiterX{\norm}[1]{\lVert}{\rVert}{#1}
\DeclarePairedDelimiterX{\abs}[1]{\lvert}{\rvert}{#1}
\DeclarePairedDelimiterX{\0norm}[1]{\lVert}{\rVert_{0}}{#1}
\DeclarePairedDelimiterX{\1norm}[1]{\lVert}{\rVert_{1}}{#1}
\DeclarePairedDelimiterX{\2norm}[1]{\lVert}{\rVert_{2}}{#1}
\DeclarePairedDelimiterX{\nnorm}[1]{\lVert}{\rVert_{n}}{#1}
\DeclarePairedDelimiterX{\2nnorm}[1]{\lVert}{\rVert_{n}^2}{#1}
\DeclareMathOperator*{\argmin}{arg\,min}
\newtheorem{definition}{Definition}[section]
\newtheorem{lemma}[definition]{Lemma}
\newtheorem{theorem}[definition]{Theorem}
\theoremstyle{definition}
\newtheorem*{remark}{Remark}
\begin{document}
\Sconcordance{concordance:VitaliTV.tex:VitaliTV.Rnw:%
1 90 1 1 0 27 1}
\Sconcordance{concordance:VitaliTV.tex:./VitaliTV-Section1.Rnw:ofs 119:%
1 97 1}
\Sconcordance{concordance:VitaliTV.tex:./VitaliTV-Section2.Rnw:ofs 217:%
1 101 1}
\Sconcordance{concordance:VitaliTV.tex:./VitaliTV-Section3.Rnw:ofs 319:%
1 93 1}
\Sconcordance{concordance:VitaliTV.tex:./VitaliTV-Section4.Rnw:ofs 413:%
1 73 1}
\Sconcordance{concordance:VitaliTV.tex:./VitaliTV-Section5.Rnw:ofs 487:%
1 574 1}
\Sconcordance{concordance:VitaliTV.tex:./VitaliTV-Section6.Rnw:ofs 1062:%
1 194 1}
\Sconcordance{concordance:VitaliTV.tex:./VitaliTV-Section7.Rnw:ofs 1257:%
1 105 1}
\Sconcordance{concordance:VitaliTV.tex:./VitaliTV-Section8.Rnw:ofs 1363:%
1 62 1}
\Sconcordance{concordance:VitaliTV.tex:./VitaliTV-Section9.Rnw:ofs 1426:%
1 5 1}
\Sconcordance{concordance:VitaliTV.tex:VitaliTV.Rnw:ofs 1432:%
128 36 1}
\Sconcordance{concordance:VitaliTV.tex:./VitaliTV-Section10.Rnw:ofs 1469:%
1 329 1}
\Sconcordance{concordance:VitaliTV.tex:VitaliTV.Rnw:ofs 1799:%
166 1 1}

\title{\bf Tensor denoising with trend filtering}

\author{Francesco Ortelli and Sara van de Geer\\
Seminar f\"{u}r Statistik, ETH Z\"{u}rich\\
R\"{a}mistrasse 101, CH-8092 Z\"{u}rich\\
$\{$fortelli,geer$\}$@ethz.ch}
\date{\today}

\maketitle

\begin{abstract}
We extend the notion of trend filtering to tensors by considering the $k^{\rm th}$-order Vitali variation -- a discretized version of the integral of the absolute value of the $k^{\rm th}$-order total derivative. We prove adaptive $\ell^0$-rates and not-so-slow $\ell^1$-rates for tensor denoising with trend filtering.

For $k=\{1,2,3,4\}$ we prove that the $d$-dimensional margin of a $d$-dimensional tensor can be estimated at the $\ell^0$-rate $n^{-1}$, up to logarithmic terms, if the underlying tensor is a product of $(k-1)^{\rm th}$-order polynomials on a constant number of hyperrectangles. For general $k$ we prove the $\ell^1$-rate of estimation $n^{- \frac{H(d)+2k-1}{2H(d)+2k-1}}$, up to logarithmic terms, where $H(d)$ is the $d^{\rm th}$ harmonic number.

Thanks to an ANOVA-type of decomposition we can apply these results to the lower dimensional margins of the tensor to prove bounds for denoising the whole tensor. Our tools  are interpolating tensors to bound the effective sparsity for $\ell^0$-rates, mesh grids for $\ell^1$-rates and, in the background, the projection arguments by \citet{dala17}.

\end{abstract}
Keywords: tensor denoising, total variation, Vitali variation, trend filtering, oracle inequalities

\tableofcontents
\section{Introduction}\label{vtv.s.0}

Let $f^0 \in \Rd$ be a $d$-dimensional tensor  with $n=n_1 \cdot \ldots \cdot n_d$ entries. We want to prove error bounds for tensor denoising, which is the task of recovering $f^0$ from its noisy version $Y=f^0+ \epsilon$, where $\epsilon$ has i.i.d. Gaussian entries with mean 0 and variance $\sigma^2$.

We show that we can estimate the underlying tensor $f^0$ in an adaptive manner with a regularized  least-squares signal approximator. As regularizer we propose the Vitali variation of the $(k-1)^{\rm th}$-order total differences of the candidate estimator for $k \ge 1$. We call this regularizer the ``$k^{\rm th}$-order Vitali total variation''. We use the abbreviation TV for ``total variation''. This approach extends the idea of ``trend filtering'' \citep{kim09-1,tibs14} to tensors. 

We expose the notion of TV regularization, review the literature on adaptive results for TV regularization, explain the concept of adaptation for structured problems, introduce an ANOVA-type of decomposition of a tensor, outline our contributions and finally present the organization of the paper.

\subsection{TV regularization}

A regularized (least-squares) signal approximator is an estimator ${\hat f}$ defined as
$$ \hf := \argmin_{\rf \in \Rd} \left\{ \norm{Y-\rf}^2_2/n + 2 \lambda\ {\rm pen}(\rf)  \right\},$$
where $\norm{\cdot}^2_2$ denotes the sum of the squared entries of its argument, $\lambda>0$ is a tuning parameter and ${\rm pen}(\rf)$ is a regularization penalty.

When ${\rm pen}(\rf)= \norm{D \rf}_1$ for a linear operator $D$ and for $\norm{\cdot}_1$ denoting the sum of the absolute values of the entries of its argument, the regularized signal approximator is called ``$\ell^1$-analysis estimator'' or simply ``analysis estimator'' \citep{elad07}. If the linear operator $D$ is a difference operator, then ${\rm pen}(\rf)= \1norm{D \rf}$ is usually called TV of $\rf$ and the estimator $\hf$ is called TV regularized estimator. Different choices of the difference operator $D$ are possible, resulting in different notions of TV.

For a continuous image defined on $(x_1, \ldots, x_d) \in [0,1]^d$, one can choose $D$ as a discretized version of either the total $k^{\rm th}$-order derivative operator $\prod_{i=1}^d \partial^k/(\partial x_i)^k$ or of the sum of $k^{\rm th}$-order partial derivative operators $\sum_{i=1}^d \partial^k/(\partial x_i)^k$.

\subsection{Literature review: adaptive results for TV regularization}

For $d=1$ partial and total derivatives coincide. With $D$ being the first order difference matrix, the TV regularized estimator is also known under the name ``fused Lasso'' \citep{tibs05,frie07}. Adaptivity of the fused Lasso has been proved by \citet{dala17, lin17b,gunt20}.

The ``edge Lasso'' extends the fused lasso to graphs and is studied by \citet{shar12,hutt16}. \citet{orte18,orte19-2} prove adaptivity of the edge Lasso on tree graphs and cycle graphs, respectively. 

The idea of the fused Lasso can also be extended to the penalization of higher-order differences. This extension is called ``trend filtering'' \citep{kim09-1, tibs14, tibs20}. Adaptivity of trend filtering is established in \citet{gunt20,vand19}. \citet{wang16} consider trend filtering on graphs,  \citet{sadh17b} in higher-dimensional situations and \citet{sadh19} for additive models.

Here, we consider the case of $D$ being a discretization of $\prod_{i=1}^d \partial^k/(\partial x_i)^k$. We call the corresponding notion of TV ``$k^{\rm th}$-order Vitali TV''. In the literature, signal approximators regularized with the Vitali TV are studied by \citet{mamm97-2,orte19-4,fang19}.
\citet{orte19-4} prove adaptivity for $d=2$ and $k=1$.
\citet{fang19} show adaptivity for $d=2$ and $k=1$ using as regularizer the Hardy-Krause variation, which is the sum of the Vitali TV of a matrix and of its margins. In this paper we will prove adaptivity of tensor denoising with $k^{\rm th}$-order Vitali TV regularization for $k=\{1,2,3,4\}$ and general dimension $d\ge 1$.  The results obtained for $k=\{1,2,3,4\}$ and $d=1$ in \citet{vand19} and for $k=1$ and $d=2$ in \citet{orte19-4} will then be retrieved as special cases.

Signal approximators regularized with $D$ being a discretization of the partial derivatives $\sum_{i=1}^d \partial^k/(\partial x_i)^k$ are studied by \citet{hutt16,sadh17b} for general $d$. For $d=2$, \citet{chat19} show the fast rate $n^{-3/4}$ for estimating axis-aligned rectangles.

\subsection{Adaptation for structured problems}

The analysis estimator $ \hf$ can be recast in a constructive formulation as ``synthesis  estimator''. One can find  dictionary tensors $\{\phi_j\in \Rd\}_{j \in [p]}$, such that
$$ \hf= \sum_{j=1}^p {\hat \beta}_j \phi_j , \text{ where } {\hat \beta}:= \argmin_{b \in \R^n} \left\{ \norm{Y-\sum_{i=1}^p b_j \phi_j}^2_2/n + 2 \lambda \sum_{j \not\in U} \abs{b_j}  \right\},$$
and $U \subseteq \{1, \ldots, p\}$ is a set of indices, cf. \citet{elad07}. The Lasso estimator \citep{tibs96,buhl11,vand16} is an instance of synthesis estimator. The dictionary $\{\phi_j\}_{j \in [p]}$  and  the set of unpenalized coefficients $U \subseteq [p]$ depend on $D$. We can see that $D$ imposes structure on the estimator: it determines the dictionary with which the estimator is constructed. For instance, in the case of the $1^{\rm st}$-order Vitali TV, the dictionary $\{\phi_j\}_{j \in [p]}$  consists of tensors being constant on hyperrectangles. Therefore, the estimator $\hf$ is constant on few hyperrectangular pieces.

Our goal is to prove adaptation of the estimator $\hf$ to the underlying signal $f^0$, when $\1norm{D\rf}$ is the $k^{\rm th}$-order Vitali TV.

Adaptation is a consequence of a high-probability upper bound on the mean squared error (MSE) in the form of the oracle inequality
\begin{equation}\label{tensor.adaptivity}
\norm{\hf-f^0}^2_2/n \le \norm{g-f^0}^2_2/n + \rem(D,g,S),
\end{equation}
where $g \in \Rd$ is an  arbitrary tensor, $S$ is an  arbitrary set of indices of $D g$ and $\rem(D,g,S)$ is a remainder term.
A result of the form of \eqref{tensor.adaptivity} establishes the adaptation of the estimator $\hf$, provided that the remainder term $\rem(D, g=f^0,S=S^0)$ converges to zero, where $S^0$ is the set of the indices of the  nonzero coefficients of $D f^0$. The cardinality $s^0:= \abs{S^0}$ of $S^0$ is called the ``sparsity'' of $f^0$ with respect to $D$.

We can optimize the upper bound in \eqref{tensor.adaptivity} over $g$ and $S$. However, the optimizers $g^*$ and $S^*$ will depend on $f^0$ -- which is unobserved. Hence the name ``oracle'' for the pair $(g^*(f^0), S^*(f^0))$ and the name ``oracle inequality'' for results as \eqref{tensor.adaptivity}.

Such a result is considered to be adaptive, since different underlying true tensors $f^0$ will possibly give place to different oracles $(g^*(f^0), S^*(f^0))$ and to different values for the upper bound.

Results as \eqref{tensor.adaptivity} are only useful if it can be proved that $\rem(D,f^0,S^0)$ converges to zero. Typically 
$$ \rem(D,f^0,S^0)=\bigo \left( \lambda^2\Gamma^2_D(S^0) \right),$$
where $\Gamma^2_D(S^0)$ is called ``effective sparsity'' and depends both on $D$ and $S^0$.
Proving adaptivity therefore translates into proving a bound for the effective sparsity: a task which depends on the structure imposed by $D$. To bound the effective sparsity for tensor denoising with trend filtering we use an interpolating tensor, in analogy to the interpolating vector and the interpolating matrix by \citet{vand19,orte19-4}.

Adaptive results as \eqref{tensor.adaptivity} are a consequence of a careful choice of $\lambda$.
The general theory for the Lasso \citep{buhl11,vand16} suggests the choice $\lambda \asymp \lambda_0 \asymp \sqrt{\log(n)/n}$, where $\lambda_0$ is called the ``universal choice''. The universal choice ensures that all the noise is overruled. However, \citet{dala17} show that also the smaller choice $\lambda \asymp {\tilde \gamma} \lambda_0$ is possible, where ${\tilde \gamma} >0$ is a scaling factor  which accounts for the correlation in the dictionary $\{\phi_j\}_{j \in [p]}$ induced by $D$ and $S^0$. The projection arguments by \citet{dala17} in the background of our results allow us to choose the tuning parameter of smaller order than the universal choice $\lambda_0$.

Projection arguments have been discussed in the literature. We do not report them here but refer instead to Theorem 3 in \citet{dala17}, Lemma B2 and Lemma C2 in \citet{orte19-2}, Lemma 13 in \citet{orte19-4} and to \citet{vand21}.

\subsection{ANOVA decomposition}

In the continuous case, the ``nullspace'' of the ${k^{\rm th}}$-order derivative operator along one coordinate is made of constant, linear, ..., $(k-1)^{\rm th}$-order monomial functions. The nullspace of the total derivative operator in $d$-dimensions is made of $d$-dimensional functions which are  linear, ..., $(k-1)^{\rm th}$-order monomial along at least one coordinate. In the discrete case when $n_1 \asymp \ldots \asymp n_d$ the linear space spanned by such tensors is  $n^{1-1/d}$-dimensional.

We will decompose a tensor $f\in \Rd$ into a sum of mutually orthogonal tensors. Each of these mutually orthogonal tensors will be  constant or linear or ... or $(k-1)^{\rm th}$-order monomial along a set of $l$ coordinates, for $l \in [0:d]$. This construction will be carried out for all possible sets of coordinates in $[d]$. Tensors being constant or linear or ... or $(k-1)^{\rm th}$-order monomial along $d-l$ coordinates will be called $l$-dimensional margins.

We will adaptively estimate $l$-dimensional margins with $l$-dimensional Vitali TV regularized estimators, for $l \in [d]$. The $0$-dimensional margins will be estimated by ordinary least squares at a rate $n^{-1}$. By estimating all the margins adaptively we will be able to prove adaptivity of the denoising of the whole tensor via Vitali TV regularization.

\subsection{Contributions}

Previously, we have derived tools like interpolating vectors and matching derivatives to prove adaptivity for trend filtering ($d=1$ and $k=\{1,2,3,4\}$, see \citet{vand19}). In \citet{orte19-4} we have come up with tools to extend our results for adaptation of the fused Lasso ($d=1$ and $k=1$) to the two-dimensional case of image denoising ($d=2$ and $k=1$). Here, we show in the first place how to combine and extend the tools from image denoising and one-dimensional trend filtering to handle trend filtering for $k=\{1,2,3,4\}$ and for general dimension $d$. Establishing adaptivity requires a so-called ``bound on the antiprojections''. We prove a formula giving the bounds on the antiprojections for general $k$ and $d$. We then propose an ANOVA decomposition to ensure that all the margins of a $d$-dimensional tensor can be estimated adaptively.

Lastly, we prove slow rates for tensor denoising with trend filtering. We extend the idea of mesh grid by \citet{orte19-4} to general $d$ and general $k$. We then prove a bound on the antiprojections with the help of the mesh grid holding for all $d$ and all $k$.

The integration of the arguments by \citet{vand19} with the ones by \citet{orte19-4}, the general bounds on the antiprojections and the ANOVA decomposition allow us to present general risk bounds for tensor denoising with trend filtering.

\subsection{Organization of the paper}

In Section \ref{vtv.s.1} we expose the required notation, the model and define  the trend filtering estimator for the $d$-dimensional margin.

In Section \ref{vtv.s.2} we list our contributions and give a  preview of the results: adaptive $\ell^0$-rates and not-so-slow $\ell^1$-rates.

In Section \ref{vtv.ss.2.3} we derive the synthesis form of the trend filtering estimator for the $d$-dimensional margin.

Proving the main result on adaptivity for tensor denoising with  trend filtering is the topic of Section \ref{vtv.s.3}.

In Section \ref{vtv.s.4} we apply a general result on not-so-slow $\ell^1$-rates for analysis estimators to tensor denoising with trend filtering.

In Section \ref{vtv.s.5} we show the ANOVA decomposition of a tensor and define the estimators for lower-dimensional margins.

In Section \ref{vtv.s.6} we apply the results on adaptivity and on not-so-slow $\ell^1$-rates to the estimators for the lower-dimensional margins  defined in Section \ref{vtv.s.5}. This will establish adaptivity and not-so-slow rates for the estimation of the whole tensor.

Section \ref{TV1S8} concludes the paper.

\section{Model, notation and estimator}\label{vtv.s.1}

We consider the model
$$ Y=f^0+ \epsilon,$$
where $Y, f^0, \epsilon\in \Rd$ are $d$-dimensional tensors and $\epsilon$ has i.i.d. $\N(0, \sigma^2)$ entries with known variance $\sigma \in (0, \infty)$.  For the case of unknown variance we refer to \citet{orte19-2}, who show how to estimate $f^0$ and $\sigma$ at the same time. 

The goal is to estimate $f^0$ given its noisy observations $Y$. We consider a signal approximator regularized with the Vitali TV. 

\subsection{Signals supported on $d$-dimensional tensors}
For two integers $i\le j$ we define $[i:j]:= \{i, \ldots, j \}$. Moreover, if $i=1$ we write $[j]:=[1:j]$.

Let $f\in \mathbb{R}^{n_1\times \ldots \times n_d}$ be a $d$-dimensional tensor with $n:= n_1 \ldots n_d$ entries. For indices $(j_1, \ldots, j_d)\in [n_1]\times \ldots \times [n_d]$ we refer to the corresponding entry of $f$ by $f_{j_1, \ldots, j_d}$ using indices or by $f(j_1, \ldots, j_d)$ using arguments. 


For $(j_1',\ldots, j_d'), (j_1'',\ldots, j_d'')\in [n_1]\times \ldots \times [n_d] $ we use the notation
$$ \sum_{j_1',\ldots, j_d'}^{j_1'',\ldots, j_d''} f_{j_1, \ldots, j_d}:= \sum_{j_d=j_d'}^{j_d''} \cdots \sum_{j_1=j_1'}^{j_1''} f_{j_1, \ldots, j_d}.$$
Similarly we write 
$$\{f_{j_1, \ldots, j_d}\}_{j_1', \ldots, j_d'}^{j_1'', \ldots, j_d''}:= \{f_{j_1, \ldots, j_d}\}_{(j_1, \ldots, j_d)=(j_1',\ldots,  j_d')}^{(j_1'', \ldots, j_d'')}.$$

By $\| f \|_2 := ( \sum_{1, \ldots, 1}^{n_1, \ldots, n_d}  f^2_{j_1, \ldots, j_d}  )^{1/2}$ we denote the Frobenius norm of $f$.
Moreover we define $ \norm{f}_1:=  \sum_{1, \ldots, 1}^{n_1, \ldots, n_d} \abs{f_{j_1, \ldots, j_d}}$ as  the sum of the absolute values of the entries of $f$. 

\subsubsection{Tensors with product structure}
We now let $f \in \Rd$ be a $d$-dimensional tensor with $n:= n_1 \cdot \ldots \cdot n_d$ entries. Define the set of indices $I$ of the entries of $f$ as $ I:= [n_1] \times \ldots \times [n_d]$.

We say that $f$ has product structure if there are vectors $\{f_j\}_{j \in [d]}$ such that
$$ f(j_1, \ldots, j_d)= f_1(j_1) \cdot \ldots \cdot f_d(j_d), \forall (j_1, \ldots, j_d) \in I.$$
We then write $ f= f_1 \times \ldots \times f_d$.

Let $f$ and $g$ be tensors with product structure. We consider the entry-wise multiplication $(f \odot g)_{j_1, \ldots, j_d}= f_{j_1, \ldots, j_d} g_{j_1, \ldots, j_d}, (j_1, \ldots, j_d ) \in I$.

It holds that $ (f \odot g)_{j_1, \ldots, j_d}= \prod_{l=1}^d f_l(j_l)g_l(j_l), \forall (j_1, \ldots, j_d ) \in I$.

\subsubsection{Orthogonality between tensors}
The operation $ \sum_{1, \ldots, 1}^{n_1, \ldots, n_d} (f \odot g)_{j_1, \ldots, j_d}$
is the equivalent of the scalar product for tensors.

We say that the tensors $f$ and $g$ are orthogonal if $ \sum_{1, \ldots, 1}^{n_1, \ldots, n_d} (f \odot g)_{j_1, \ldots, j_d}=0$.
If $f$ and $g$ have product structure and $f_l$ and $g_l$ are orthogonal to each other for at least one coordinate $l\in [d]$, then $f$ and $g$ are orthogonal too.

\subsubsection{Linear subspaces and orthogonal projections}

Let $\mathcal{W}$ be a linear subspace of $\R^{n_1\times \ldots \times n_d}$ and let ${\mathcal W}^{\perp}$ be its orthogonal complement. By ${\rm I}: \R^{n_1\times \ldots \times n_d} \mapsto \R^{n_1\times \ldots \times n_d}$ we denote the identity operator, i.e., ${\rm I}f=f$.  By ${\rm P}_{\mathcal W}$ we denote the orthogonal projection operator onto ${\mathcal W}$ and by ${\rm A}_{\mathcal W}:={\rm I}- {\rm P}_{\mathcal W}= {\rm P}_{{\mathcal W}^{\perp}}$ the corresponding orthogonal antiprojection operator. For a tensor $f\in \R^{n_1\times \ldots \times n_d}$ we write $f_{\mathcal W}:= {\rm P}_{\mathcal W}$ and $f_{{\mathcal W}^{\perp}}:= f-f_{\mathcal W}$.

For a linear operator $\Delta$, let $\N(\Delta)$ denote its nullspace.

\subsection{Estimator}

Let $k$ be an integer in $\{1, \ldots, \min_{i \in [d]} n_i-1\}$.

Let $D^k_i$ be the  $k^{\rm th}$-order  difference operator along the $i^{\rm th}$ coordinate, defined as
$$ (D_i^k f)(j_1,\ldots,  j_i, \ldots, j_{d}):= n_i^{k-1}\sum_{l=0}^{k} (-1)^l \binom{k}{l} f(j_1,\ldots, j_i-l,  \ldots, j_{d}),$$
for $ (j_1,\ldots, j_{i-1}, j_i, j_{i+1}, \ldots, j_{d}) \in [n_1] \times \ldots \times [n_{i-1}] \times [k+1: n_i] \times [n_{i+1}] \times \ldots \times [n_d]$.

\begin{definition}[Total $k^{\rm th}$-order difference operator]\label{vtv.d.2.1}
The total $k^{\rm th}$-order difference operator $D^k$ is defined as
$$D^k:= \prod_{i =1}^d D^k_i.$$
\end{definition}

The total $k^{\rm th}$-order difference operator $D^k$ can be seen as a discretized version of $\prod_{i=1}^d \partial^k/(\partial x_i)^k $. It is important to note that the definition of $D^k$ implicitly includes a factor $n^{k-1}$ that stems from the discretization.

The Vitali TV of a tensor $ f \in \Rd$ is defined as the sum of the absolute values of its total $k^{\rm th}$-order differences.

\begin{definition}[$k^{th}$-order Vitali TV]\label{vtv.d.2.2}
The $k^{th}$-order Vitali TV $\TV_k(f)$ of a $d$-dimensional tensor $f\in \Rd$ is defined as
$$ \TV_k(f):= \1norm{D^kf}.$$
\end{definition}

The $k^{th}$-order Vitali TV has the canonical scaling  $\TV_k(f)= \bigo(1)$ due to the normalization by the factor $n^{k-1}$ in the definition of $D^k$. We refer to \citet{sadh16} for more about canonical scalings.

We define the nullspace $\N_k$ of $D^k$ as $ \N_k:= \{f \in \Rd: D^kf=0\}$
and its orthogonal complement as $\N^{\perp}_k$.
We call $f_{\N^{\perp}_k}$ the $d$-dimensional margin of a tensor $f\in \Rd$.

\begin{definition}[$k^{\rm th}$-order trend filtering estimator]\label{vtv.d.2.3}
 The $k^{\rm th}$-order Vitali trend filtering estimator $\hf_{\N^{\perp}_k}$ for the $d$-dimensional margin $f^0_{\N^{\perp}_k}$ is defined as
 $$ \hf_{\N^{\perp}_k}:= \argmin_{\rf\in \Rd} \left\{ \norm{(Y-\rf)_{\N^{\perp}_k}}^2_2/n+ 2 \lambda \TV_k(\rf) \right\},$$
 where $\lambda>0$ is a tuning parameter.
\end{definition}

\subsection{Active sets}

Let $S\subseteq [3:n_1-1] \times \ldots \times [3:n_d-1]$ be a subset of the indices of $D^k f$ for some tensor $f \in \R^{n_1 \times \ldots \times  n_d}$. We write $s:= \abs{S}$ and $ S=\{t_1, \ldots, t_s\}$, where $t_m=(t_{1,m}, \ldots, t_{d,m})$. We call $\{t_m\}_{m=1}^s$ the jump locations.


Moreover we define $ a_S :=  \{ a_{j_1, \ldots, j_d}, \ {(j_1, \ldots, j_d)\in S} \}$ and $\ a_{-S}:= \{ a_{j_1, \ldots, j_d}, \ (j_1, \ldots, j_d) \notin S \}$. 
We will use the same notation $a_S $ for the tensor
which shares its entries with $a$ for $(j_1, \ldots, j_d) \in S$ and has all its other
entries equal to zero. 
Similarly, we will also denote by $a_{-S}$ a tensor that shares its entries with $a$ for $(j_1, \ldots, j_d) \not \in S$ and has its other entries equal to zero.

\section{Contributions}\label{vtv.s.2}

We make the following contributions:
\begin{itemize}
\item We extend the idea of trend filtering to $d$-dimensional settings via the Vitali variation and total discrete derivatives.
\item We prove adaptive $\ell^0$-rates for tensor denoising with trend filtering for $k=\{1,2,3,4\}$, see Theorem \ref{vtv.t.2.1}, a simplified version of Theorem \ref{vtv.t.3.1}. The rates for $d=1$ and $k=\{1,2,3,4 \}$ and for $d=2$ and $k=1$ are known. Rates for the other cases are new contributions.
We also expose some sufficient conditions to find adaptive bounds for general $k$. For each given  $k$ one can check by computer whether the conditions hold but the problem of showing that they hold for general $k$ remains open.
\item We prove not-so-slow $\ell^1$-rates for tensor denoising with trend filtering, see Theorem \ref{vtv.t.2.2}. Here too, the rates for $d =2$ and $k \ge 2$ and for $d \ge 3$ are new contributions. It is still an open problem whether these rates correspond for $d\ge 2$ to minimax rates (modulo log terms).
\item We extend the idea of ANOVA decomposition from $1^{\rm st}$-order differences to $k^{\rm th}$-order differences in $d$ dimensions. By means of this ANOVA decomposition we can apply the results for the $d$-dimensional margin to lower dimensional margins. We obtain $\ell^0$- and $\ell^1$-rates for the estimation of the whole tensor by trend filtering.
\item Our results allow to recover previous results for trend filtering and image denoising \citep{vand19,orte19-4} as special cases.
\end{itemize}

\subsection{Preview of the results}

We consider tensors in $\Rd$ such that $n_1= \ldots= n_d$.

Let
$$ \lambda_0 (t) :=\sigma \sqrt { 2 \log (2n) +2 t \over n }, t >0 . $$
We call $\lambda_0(t)$ the ``universal choice'' of the tuning parameter.
The universal choice $ \lambda= \lambda_0(t)$ guarantees that all the noise is overruled. However, our results also allow for a smaller choice than the universal choice, due to the projection arguments by \citet{dala17} in the background.

\begin{theorem}[Adaptivity of Vitali trend filtering, simplified]\label{vtv.t.2.1}
 Fix $k\in\{1,2,3,4\} $. Let $g \in \R^{n^{1/d}\times \ldots \times n^{1/d}}$ be arbitrary. Let $S\subseteq \ttimes{i \in [d]}[k+2:n^{1/d}-1]$ be an arbitrary set of size $s:=\abs{S}$ defining a regular grid of cardinality $s^{1/d}\times\ldots \times s^{1/d}$ parallel to the coordinate axes.
For a large enough constant $C>0$ only dependent on $k$, choose
$$\lambda \ge C \ d^{3/2}\  \frac{\lambda_0(\log(2n))}{ s^{\frac{2k-1}{2d}}} .$$
Then, with probability at least $1-1/n$, it holds that
\begin{eqnarray*}
\norm{(\hf-f^0)_{\N^{\perp}_k}}^2_2/n &\le& \norm{g-f^0_{\N^{\perp}_k}}^2_2/n + 4 \lambda \norm{(D^k g)_{-S}}_1\\
&&+ \bigo\left(\lambda^2 \ \frac{s^{2k}\log(n/s)}{n}   \right).
\end{eqnarray*}
\end{theorem}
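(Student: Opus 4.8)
The plan is to derive this as a corollary of the general oracle inequality for analysis estimators (the one that will appear as Theorem~\ref{vtv.t.3.1}), by verifying the two ingredients that the general machinery requires: a bound on the \emph{effective sparsity} $\Gamma^2_{D^k}(S)$, and a bound on the \emph{antiprojections} of the dictionary elements onto the span of the columns of $(D^k)^\dagger$ indexed by $S$. The remainder term $\rem(D^k, g, S)$ in \eqref{tensor.adaptivity} decomposes, after the $\dala17$-style projection argument, into the ``overshoot'' term $4\lambda \norm{(D^k g)_{-S}}_1$ (coming from the part of $D^k g$ outside the active set) and a term of order $\lambda^2 \Gamma^2_{D^k}(S)$. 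So the whole statement reduces to showing $\Gamma^2_{D^k}(S) = \bigo(s^{2k}\log(n/s)/n)$ for $S$ a regular axis-parallel grid, together with checking that the prescribed lower bound on $\lambda$ is exactly what the noise/antiprojection argument needs, namely $\lambda \gtrsim \tilde\gamma \lambda_0$ with $\tilde\gamma$ the relevant antiprojection scaling factor $\asymp d^{3/2} s^{-(2k-1)/(2d)}$.

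First I would set up the one-dimensional building block. For $d=1$ and order $k$, \citet{vand19} constructs an \emph{interpolating vector}: given $s$ jump locations in $[k+2:n^{1/d}-1]$, one builds a vector $w$ with $\norm{D^k w}_\infty \le 1$, with prescribed signs of $(D^k w)$ on $S$, whose coordinates are bounded, and which is orthogonal to the nullspace $\N_k$; the key quantitative output is that $\norm{w}_2^2/n = \bigo(s^{2k}\log(n/s)/n)$ and that the ``antiprojection'' of the relevant synthesis dictionary element has the scaling $s^{-(2k-1)/2}$. I would then lift this to the tensor case by taking tensor products: because $S$ is a product grid $S_1\times\cdots\times S_d$ with $\abs{S_i}=s^{1/d}$, one forms $w = w_1 \times \cdots \times w_d$ where $w_i$ is the one-dimensional interpolating vector for $S_i$ at order $k$. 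Since $D^k = \prod_i D^k_i$ acts coordinate-wise, $D^k w = (D^k_1 w_1)\times\cdots\times(D^k_d w_d)$, so the sign pattern and $\norm{\cdot}_\infty\le 1$ control on $D^k w$ are inherited multiplicatively, and $\norm{w}_2^2 = \prod_i \norm{w_i}_2^2$. This is exactly the step where the factors $s^{1/d}$ per coordinate combine back to $s$ overall, and where the per-coordinate antiprojection scaling $(s^{1/d})^{-(2k-1)/2}$ multiplies across $d$ coordinates to give $s^{-(2k-1)/2}$ — but note the \emph{bound on antiprojections} for a product dictionary picks up a $d^{3/2}$ (the $d^{1/2}$ from summing $d$ coordinate contributions in an $\ell^2$ sense, times another factor from the interpolation constants), which is the source of the $d^{3/2}$ in the statement; I would track this via the general antiprojection formula the paper advertises proving "for general $k$ and $d$."

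Next I would assemble the effective-sparsity bound. The standard route (as in \citet{vand19,orte19-4}) is: $\Gamma^2_{D^k}(S) \le \norm{w}_2^2 \cdot (\text{const depending on }k,d)$ once $w$ is an interpolating tensor with the right sign pattern and the right boundedness; the $\log(n/s)$ factor enters because the one-dimensional interpolant must interpolate a step of size $\asymp 1$ over a gap of length $\asymp n^{1/d}/s^{1/d}$ using a $k$-th order spline, whose $\ell^2$ energy over that gap carries a logarithmic correction when one optimizes the knot placement — this is inherited unchanged from the $d=1$ analysis. Plugging $\norm{w}_2^2/n = \bigo(s^{2k}\log(n/s)/n)$ into $\rem = \bigo(\lambda^2 \Gamma^2_{D^k}(S))$ yields the third term in the conclusion. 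Finally, I would check the probability statement: the event on which the oracle inequality holds is the event that the noise, projected onto the $S$-indexed subspace of the dictionary, is controlled by $\lambda$; with $\lambda \ge C d^{3/2} \lambda_0(\log(2n)) s^{-(2k-1)/(2d)}$ and $\lambda_0(t)$ as defined, a union bound over the (few) relevant directions gives failure probability $\le 1/n$.

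The main obstacle is the construction and analysis of the tensor interpolant and, with it, the precise antiprojection bound: verifying that the tensor-product interpolant $w_1\times\cdots\times w_d$ really does have $\norm{D^k w}_\infty \le 1$ with the correct signs \emph{and} simultaneously controls the antiprojection with only a $d^{3/2}$ loss requires the delicate one-dimensional estimates of \citet{vand19} (the "matching derivatives" conditions, which are where the restriction $k\in\{1,2,3,4\}$ bites) and a careful bookkeeping of how the per-coordinate quantities multiply. Everything downstream — feeding the bound into the general oracle inequality, the choice of $\lambda$, the high-probability statement — is then routine.
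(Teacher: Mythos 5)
Your overall route---reduce to the fast-rate oracle inequality, bound the effective sparsity by exhibiting an interpolating object, and read the admissible $\lambda$ off an antiprojection bound---is indeed the paper's route (Theorem~\ref{vtv.t.2.3} plus Lemmas~\ref{vtv.l.3.3}--\ref{vtv.l.3.7}), but two of your central steps fail as stated. First, the antiprojections of a product dictionary atom do \emph{not} multiply across coordinates. For a grid active set the projection onto ${\rm span}\{\tf^k_t,\,t\in\tS\}$ factorizes as a tensor product of one-dimensional projections, and
$$\norm{({\rm I}-{\rm P}_{\tS})(\tf^k_{j_1}\times\cdots\times\tf^k_{j_d})}_2^2\;=\;\prod_{i=1}^d\norm{\tf^k_{j_i}}_2^2-\prod_{i=1}^d\norm{{\rm P}_i\tf^k_{j_i}}_2^2\;\le\;\sum_{i=1}^d\norm{{\rm A}_i\tf^k_{j_i}}_2^2\prod_{l\neq i}\norm{\tf^k_{j_l}}_2^2,$$
i.e.\ the squared antiprojection is bounded by a \emph{sum}, not a product, of the one-dimensional ones; this is the content of Lemma~\ref{vtv.l.3.3} and is exactly why ${\tilde\gamma}\asymp d^{3/2}s^{-\frac{2k-1}{2d}}$ (one coordinate's worth of decay, with the $d\cdot\sqrt d$ coming from Lemma~\ref{vtv.l.3.4} and the definition of the noise weights), and not $s^{-\frac{2k-1}{2}}$ as your ``multiplies across $d$ coordinates'' sentence asserts. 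That sum structure also dictates the constraint the interpolating tensor must satisfy, namely $\abs{w_{j_1,\ldots,j_d}}\le 1-\frac{1}{dC}\sum_i v_{i,m}(j_i)$, which is weaker per coordinate than what a product construction would be built to meet.

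Second, the global tensor product $w=w_1\times\cdots\times w_d$ of one-dimensional interpolating vectors is not a valid interpolating tensor. Definition~\ref{vtv.d.2.12} (and Theorem~\ref{vtv.t.2.3} with $q_S=\text{sign}((D^kg)_S)$, $g$ arbitrary, and Lemma~\ref{vtv.l.3.7}, which maximizes over sign configurations) requires $w$ to equal the arbitrary sign $(q_S)_{t_m}$ at each of the $s$ grid points, whereas a product can only realize rank-one sign patterns $q_1(t_{1,m})\cdots q_d(t_{d,m})$. The paper avoids this by constructing $w$ \emph{locally} on each cell $R_m$ of a hyperrectangular tessellation (one jump per cell, so the sign is free cell by cell), and even there it does not use a plain product: Equation~\eqref{vtv.e.3.2} takes $w=\frac1d\sum_{i=1}^d\prod_{l=1}^d w_{l,i,m}$, where only the $i^{\rm th}$ factor uses the noise-weight-constrained profile $\omega$ (with leading behaviour $1-a_0x^{\frac{2k-1}{2}}$) and the remaining factors use the $k^{\rm th}$-order polynomial $\rw$. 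This matters quantitatively: by Lemma~\ref{vtv.l.3.6}, $k^{\rm th}$-order differences of $x^{\frac{2k-1}{2}}$ incur a logarithm while those of $x^k$ do not, so even a repaired, cell-local product of $d$ square-root-type profiles would give $\log^d(n/s)$ in the effective sparsity instead of the single $\log(n/s)$ claimed in the theorem; the averaged mixed $\omega$/$\rw$ construction, licensed by the summed noise weights, is what delivers the single log, and it is for this construction that the monotonicity checks behind the restriction $k\in\{1,2,3,4\}$ are carried out. With these two repairs (summed antiprojection bound, cell-local averaged-product interpolant), the rest of your outline---plugging $\Gamma^2_{D^k}=\bigo(s^{2k}\log(n/s))$ and $\lambda\ge{\tilde\gamma}\lambda_0$ into Theorem~\ref{vtv.t.2.3} with $x\asymp t\asymp\log n$---is the paper's proof; note also that the relevant quantitative output of the interpolant is $n\norm{(D^k)'w}_2^2$ (Lemma~\ref{vtv.l.2.2}), not $\norm{w}_2^2/n$.
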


\begin{proof}
See Subsection \ref{vtv.ss.3.5} for the proof of the more general Theorem \ref{vtv.t.3.1}.
\end{proof}

Some examples of the exponent of $s$ in the rate of Theorem \ref{vtv.t.2.1} for $d=\{1,2,3\}$ and $k=\{1,2,3,4\}$  are exposed in Table \ref{vtv.tab.2.1}.
\begin{table}[h]
\center
\begin{tabular}{|l|l|l|l|l|l|}
\hline
 & $k=1$ & $k=2$ & $k=3$ & $k=4$  \\ \hline
 $d=1$ & 1 & 1 & 1 & 1  \\ \hline
 $d=2$ & 3/2 & 5/2 & 7/2 & 9/2  \\ \hline
 $d=3$ & 5/3 & 3 & 13/3  & 17/3  \\ \hline
  $d$ general &  $2-1/d$ & $4-3/d$ & $6-5/d$  & $8-7/d$  \\ \hline
\end{tabular}
\caption{Some examples of the exponent of $s$ in the rate of Theorem \ref{vtv.t.2.1} for the choice $\lambda \asymp s^{-\frac{2k-1}{2d}} \lambda_0(\log(2n))$.}\label{vtv.tab.2.1}
\end{table}

If in Theorem \ref{vtv.t.2.1} we set $g=f^0_{\N^{\perp}_k}$ and choose the tuning parameter $\lambda \asymp s^{-\frac{2k-1}{2d}} \lambda_0(\log(2n))$ depending on the (typically unknown) true active set $S_0$, 
we obtain the rate
$$ \bigo\left(\frac{s_0^{2k-\frac{2k-1}{d}}\log n \log(n/s)}{n}   \right).$$
If in Theorem \ref{vtv.t.2.1} we set $g=f^0_{\N^{\perp}_k}$ and we choose the tuning parameter $\lambda \asymp \lambda_0(\log(2n))$ in a completely data-driven way not depending on the (typically unknown) true active set $S_0$,  we obtain the rate
$$ \bigo\left(\frac{s_0^{2k}\log n \log(n/s)}{n}   \right).$$

We now fix $k\in [1:\min_{i \in [d]}n_i-1]$. For $d \in \mathbb{N}$ define the  $d^{\rm th}$ harmonic number $H(d)$ as $ H(d):= \sum_{i=1}^d {1}/{i}$.

\begin{theorem}[Not-so-slow $\ell^1$-rates for Vitali trend filtering]\label{vtv.t.2.2}
Let $g \in \R^{n^{1/d}\times \ldots \times n^{1/d}}$ be any tensor such that $\TV_k(g)= \bigo(1)$.
Choose
$$\lambda \asymp n^{- \frac{H(d)+2k-1}{2H(d)+2k-1}} \log^{\frac{H(d)}{2H(d)+2k-1}}(n).$$
Then, with probability at least $1-\Theta(1/n)$, it holds that
\begin{equation*}
\norm{(\hf-f^0)_{\N^{\perp}_k}}^2_2/n \le \norm{g-f^0_{\N^{\perp}_k}}^2_2/n+ \bigo\left( n^{- \frac{H(d)+2k-1}{2H(d)+2k-1}} \log^{\frac{H(d)}{2H(d)+2k-1}}(n) \right).
\end{equation*}
\end{theorem}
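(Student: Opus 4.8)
The plan is to derive Theorem~\ref{vtv.t.2.2} as a consequence of a general ``slow-rate'' oracle inequality for analysis estimators (the one alluded to in Section~\ref{vtv.s.4}), combined with a bound on the antiprojections (the ``bound on the antiprojections'' advertised in the contributions) supplied by the mesh-grid construction. Concretely, the generic slow-rate result for an analysis estimator $\hf$ penalizing $\TV_k$ should read, for a suitable event of probability $1-\Theta(1/n)$,
\begin{equation*}
\norm{(\hf-f^0)_{\N^{\perp}_k}}^2_2/n \le \norm{(g-f^0)_{\N^{\perp}_k}}^2_2/n + 2\lambda\bigl(\TV_k(g)+\TV_k(\hf)\bigr) + (\text{noise term}),
\end{equation*}
and the mechanism of the proof of \citet{dala17}-type arguments then lets one absorb $\TV_k(\hf)$ and control the noise term by $c\,\lambda\,\TV_k(\hf)$ plus $\lambda\cdot(\text{``dual norm of the projected noise''})$. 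The crux is thus to estimate the relevant dual-norm quantity, i.e.\ $\ex\norm{(\text{something like }(D^{k})^{+\top}\epsilon)_{\N^{\perp}_k}}_\infty$ or more precisely the compatibility-type constant that the mesh grid is designed to bound.

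First I would set up the mesh grid: partition $[n^{1/d}]$ along each of the $d$ coordinates into $m$ equal blocks (so $m^d$ cells total) and build the interpolating tensor that is a tensor product of one-dimensional $(k-1)^{\rm th}$-degree spline-type interpolants, exactly extending the $d=2$ mesh grid of \citet{orte19-4} to general $d$ and general $k$. The key computation is to show that for any tensor $w$ one has a bound of the shape
\begin{equation*}
\abs{\langle w_{\N^{\perp}_k}, \epsilon\rangle} \le \bigl(\text{const}\cdot m^{-?}\,\TV_k(w) + \text{const}\cdot m^{d/2}\,\text{(grid-averaged noise)}\bigr),
\end{equation*}
where the exponent of $m$ in the first term is dictated by how a $(k-1)^{\rm th}$-degree interpolation error scales with block length $n^{1/d}/m$, together with the $n^{k-1}$ normalization hidden in $D^k$. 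Heuristically the first (bias-type) term should scale like $m^{-(2k-1)}$ per coordinate handled multiplicatively, and when one tracks the $d$ coordinates the aggregate exponent is what produces the harmonic number $H(d)$ after optimizing. The second term is a maximum/sum of $m^d$ sub-Gaussian variables, contributing a $\sqrt{\log(n)}$ factor with high probability.

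Then I would plug this into the slow-rate inequality: choosing $m$ to balance the bias-type term $\lambda\, m^{-(2k-1)}\TV_k(g)$ against the stochastic term $\lambda\, m^{d/2}\sigma\sqrt{\log n/n}$ (and against the residual $\lambda\TV_k(\hf)$ being absorbed), one is led to $m \asymp$ a power of $n$, and the resulting rate in $\lambda$ is exactly $n^{-\frac{H(d)+2k-1}{2H(d)+2k-1}}\log^{\frac{H(d)}{2H(d)+2k-1}}(n)$ once the $H(d)$ emerges from the telescoping/recursive balancing over the $d$ coordinates (this recursive balancing, where optimizing the split in coordinate $i$ feeds a $1/i$ into the exponent, is the real source of $H(d)=\sum_{i=1}^d 1/i$). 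Finally, with $\lambda$ so chosen and $\TV_k(g)=\bigo(1)$ by hypothesis, every term on the right-hand side is either $\norm{(g-f^0)_{\N^{\perp}_k}}^2_2/n$ or $\bigo(\lambda)$, and since $2\lambda\TV_k(g)=\bigo(\lambda)$ this gives the claimed bound; the probability bookkeeping $1-\Theta(1/n)$ comes from the sub-Gaussian maximal inequality with $t\asymp\log n$.

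The main obstacle I expect is the mesh-grid antiprojection bound itself for general $k$: one must build a $d$-fold tensor-product interpolant whose $k^{\rm th}$-order total differences are controlled (so that the ``interpolation residual'' has small $\TV_k$ and small $\ell^\infty$-type pairing with noise) \emph{and} verify that the interpolant has the right behavior on $\N^{\perp}_k$ rather than just $\N_k$ — i.e.\ that projecting onto $\N^{\perp}_k$ does not destroy the estimates. The interplay between the $n^{k-1}$ normalization, the block length $n^{1/d}/m$, and the number of blocks $m^d$ must be tracked carefully to land on the precise exponent $\frac{H(d)+2k-1}{2H(d)+2k-1}$; getting the recursion that yields $H(d)$ exactly right (rather than, say, $d$ or $\log d$) is where the delicate part lies, and it is presumably handled by an inductive argument on the dimension, peeling off one coordinate at a time and re-optimizing.
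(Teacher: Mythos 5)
Your high-level skeleton matches the paper's: Theorem \ref{vtv.t.2.2} is indeed obtained from the slow-rate oracle inequality (Theorem \ref{vtv.t.2.4}) with $x\asymp t\asymp\log n$, the penalty term absorbed via $\TV_k(g)=\bigo(1)$, and a trade-off between the size of the tuning parameter $\lambda\ge\tilde\gamma\,\lambda_0(t)$ and the parametric term $\tilde s/n$ coming from the active set. The genuine gap is in the one ingredient that actually produces the harmonic number: your active set is a \emph{regular} grid ($m$ equal blocks along each coordinate, $m^d$ cells in total). For such a grid the antiprojection of a dictionary atom is dominated by a \emph{sum} over coordinates of one-dimensional residuals, which gives only $\tilde\gamma\asymp s^{-\frac{2k-1}{2d}}$; plugging this into the trade-off yields the rate $n^{-\frac{d+2k-1}{2d+2k-1}}\log^{\frac{d}{2d+2k-1}}(n)$, i.e.\ the exponent with $d$ in place of $H(d)$ — exactly the comparison made in the paper's remark ``Mesh grids vs.\ regular grids''. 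The harmonic number cannot be recovered afterwards by any ``recursive balancing over coordinates'': the final optimization is over a single scalar ($s$, or your $m$), so $H(d)$ must already sit in the exponent relating $\tilde\gamma$ to $s$, and that is a property of the geometry of the active set, not of the balancing step.

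What the paper uses instead (Definition \ref{vtv.d.4.1} and Lemma \ref{vtv.l.4.1}) is an \emph{anisotropic union of nested product grids}: along each coordinate one fixes nested index sets $Z_i(1)\supseteq\cdots\supseteq Z_i(d)$ with $\abs{Z_i(l)}\asymp\delta^{d/l}$, and the mesh grid is $\bigcup_{(l_1,\ldots,l_d)\in\calS}\ttimes{i\in[d]}Z_i(l_i)$, so that $s\asymp\prod_{i=1}^d\delta^{d/i}=\delta^{dH(d)}$. The point of this structure is that each atom $\phi^k_{j_1,\ldots,j_d}$ decomposes into mutually orthogonal products $\prod_i u_{i,l_i}$, the pieces indexed by $\Sigma$ are spanned by atoms of the enlarged mesh grid, and the leftover pieces have norms that \emph{factorize as a product} of per-coordinate residuals; the worst tuple $(d-1,\ldots,d-1)$ then gives $\norm{{\rm A}_{\tS}\phi^k_{j_1,\ldots,j_d}}_2^2/n=\bigo(s^{-\frac{2k-1}{H(d)}})$, i.e.\ $\tilde\gamma=\bigo(s^{-\frac{2k-1}{2H(d)}})$. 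With this $h=\frac{2k-1}{2H(d)}$ the single trade-off you describe does give the stated exponent. Also note that no interpolating tensor or spline interpolant is needed for the $\ell^1$-rate (that machinery belongs to the $\ell^0$/effective-sparsity result); what is needed, and what your proposal is missing, is precisely this mesh-grid antiprojection bound.
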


\begin{proof}
See Subsection \ref{vtv.pt.2.2}.
\end{proof}

Some examples of the exponent of $n$ in the rate of Theorem \ref{vtv.t.2.2} for $d=\{1,2,3\}$ and $k=\{1,2,3\}$  are exposed in Table \ref{vtv.tab.2.2}.

\begin{table}[h]
\center

\begin{tabular}{|l|l|l|l|l|}
\hline
 & $k=1$ & $k=2$ & $k=3$  & $k$ general \\ \hline
 $d=1$ & $-2/3$ & $-4/5$ & $-6/7$ & $-2k/(2k+1)$ \\ \hline
 $d=2$ & $-5/8$ & $-3/4$ & $-13/16$  & $-(4k+1)/(4k+4)$ \\ \hline
 $d=3$ & $-17/28$ & $-29/40$ & $-41/52$  &  $- (12k+5)/(12k+16)$ \\ \hline
\end{tabular}
\caption{Some examples of the exponent of $n$ in the rate of Theorem \ref{vtv.t.2.2}.}\label{vtv.tab.2.2}
\end{table}

\section{Synthesis form}\label{vtv.ss.2.3}

According to Definition \ref{vtv.d.2.3}, the  trend filtering estimator is an analysis estimator. In this section we want to rewrite it in a constructive form, that is, in synthesis form. We show that the  trend filtering estimator can be constructed as a linear combination of tensors with product structure, where the factors are truncated monomials of order $k-1$. We call the collection of such tensors the ``dictionary''.

We first define the dictionary and then show that it is the right dictionary to construct the trend filtering estimator.

We start with the one-dimensional case. We then obtain the $d$-dimensional dictionary from the one-dimensional dictionary by constructing tensors with product structure.

\subsection{Dictionary for $d=1$}

Let $ \phi^1_j:=\{1_{\{j'\ge j \}}\}_{j'\in [n]}, j \in [n]$.
The vectors $\{\phi^1_j\}_{j \in [n]}$ are linearly independent and piecewise  constant.

For $2 \le k \le n-1$ define recursively
$$ \phi^k_j:= \begin{cases} \phi^j_j, & j \in [k-1],\\
\sum_{l \ge j} \phi^{k-1}_l/n, & j \in [k:n]. \end{cases}$$
We call the collection $\Phi^k=\{\phi^k_j\}_{j \in [n]}$ the ``original'' dictionary.

The dictionary $\Phi^k$ is a collection of $n$ linearly independent discrete (truncated) monomials: the first $k$ are monomials of order $0, 1, \ldots, k-1$, while the last $n-k$ are truncated monomials of order $k-1$.

We now define a partially orthonormalized version of the dictionary $\Phi^k$, $k \in [n-1]$.

\begin{definition}[Partially orthonormalized dictionary in one dimension]\label{vtv.d.2.5}
The (partially orthonormalized) dictionary $\tF^k=\{\tf^k_j\}_{j \in [n]}$ is defined as
\begin{equation*} {\tf}^k_j:=\begin{cases} \sqrt{n} {\rm A}_{\{\phi^l_l,l \in [j-1]\}} \phi^j_j / \norm{{\rm A}_{\{\phi^l_l,l \in [j-1]\}} \phi^j_j}_2  , & j \in [k],\\
{\rm A}_{\{\phi^l_l, l \in [k]\}} \phi^k_j, & j \in [k+1:n].
\end{cases}
\end{equation*}
\end{definition}

For $k \in [n-1]$, ${\tF}^k=\{\tf^k_j\}_{j \in [n]}$ is again a collection of $n$ linearly independent vectors, where $ \tf^k_1, \ldots, \tf^k_k, \{\tf^k_j \}_{j \in[k+1:n]}$ are mutually orthogonal. Moreover $\norm{\tf^k_j}^2_2= n, j \in [k]$.

\begin{lemma}[Relation between dictionary and difference operator]\label{vtv.l.2.1}
Fix $k \in [n-1]$. It holds that
$$ D^k \phi^k_j= D^k \tf^k_j= \begin{cases}  0, & j \in [k],\\
1_{\{j\}}, & j \in [k+1:n]. \end{cases}$$
\end{lemma}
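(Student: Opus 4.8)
The plan is to prove the claim by induction on $k$, treating the two cases $j \in [k]$ and $j \in [k+1:n]$ separately, and exploiting two structural facts: first, that $D^k$ in one dimension annihilates all polynomials of degree $\le k-1$ (so in particular $D^k \phi^l_l = 0$ for $l \in [k]$); second, that the partial orthonormalization in Definition \ref{vtv.d.2.5} only adds to $\phi^k_j$ a linear combination of the $\phi^l_l$, $l \in [k]$, which lie in $\N_k$, so $D^k \tf^k_j = D^k \phi^k_j$ for every $j$. The latter observation immediately reduces the statement about $\tf^k_j$ to the statement about $\phi^k_j$, so the real work is computing $D^k \phi^k_j$.

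First I would handle $j \in [k]$. Here $\phi^k_j = \phi^j_j$ is (a scalar multiple of) a truncated monomial of order $j-1 \le k-1$; more precisely $\phi^j_j(j') $ is, up to the normalizing powers of $n$, a polynomial of degree $j-1$ in $j'$ for $j' \ge j$, and identically the same polynomial extended for $j' < j$ — one checks from the recursion that $\phi^j_j$ is globally a degree-$(j-1)$ polynomial on $[n]$ (the truncation disappears because we iterate the partial-sum operator starting from $\phi^1_j$ only $j-1$ times). Since the one-dimensional $k^{\rm th}$-order difference operator $D^k = D^k_1$ kills every polynomial of degree strictly less than $k$, and $j-1 < k$, we get $D^k \phi^k_j = 0$.

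Next, for $j \in [k+1:n]$, I would argue by induction on $k$ with a slightly strengthened hypothesis also tracking the lower-order operators: namely that $D^{k-1}\phi^{k-1}_l = 1_{\{l\}}$ for $l \in [k:n]$, which is exactly the inductive instance of the lemma. From the recursion $\phi^k_j = \sum_{l \ge j}\phi^{k-1}_l / n$ for $j \in [k:n]$, one sees that the partial-summation operator $\rp : v \mapsto \{\sum_{l\ge \cdot} v_l / n\}$ is, up to the $1/n$ scaling, a one-sided inverse of the first difference $D^1_1$ (which carries its own factor $n^{1-1}=1$, while $D^1$ acting after the next differencing picks up the discretization factors that exactly cancel the $1/n$). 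Concretely, $D^1_1 \phi^k_j = \phi^{k-1}_j$ for $j \in [k+1:n]$ (boundary indices need a separate glance, but $j \ge k+1$ keeps us away from the problematic low indices), and then $D^k \phi^k_j = D^{k-1}_1 (D^1_1 \phi^k_j)\cdot(\text{scaling}) = D^{k-1}\phi^{k-1}_j$, wait — I must be careful: $D^k = D^k_1$ factors as $D^{k-1}_1 D^1_1$ only after correctly bookkeeping the $n^{k-1}$ versus $n^{k-2}\cdot n^0$ normalization, and this bookkeeping is exactly where the $1/n$ in the recursion is designed to cancel. Granting that, the inductive hypothesis gives $D^{k-1}\phi^{k-1}_j = 1_{\{j\}}$ for $j \in [k:n] \supseteq [k+1:n]$, completing the step. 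The base case $k=1$ is the definition $\phi^1_j = 1_{\{j' \ge j\}}$, whose first difference (with its factor $n^0 = 1$) is $1_{\{j\}}$ for $j \ge 2$, consistent with the ``$[k+1:n] = [2:n]$'' range.

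The main obstacle I anticipate is purely the normalization bookkeeping: keeping straight the powers of $n$ hidden in $D^k_i$ (the $n_i^{k-1}$ factor), in $D^k = \prod_i D^k_i$, and in the $1/n$ appearing in the recursive definition of $\phi^k_j$, and verifying that they cancel exactly so that $D^1_1 \phi^k_j = \phi^{k-1}_j$ holds with the stated constant $1$ (and not some power of $n$). A secondary nuisance is the behaviour at the low indices $j' \in [k-1]$ where the piecewise definition of the dictionary switches branches; one must check that restricting attention to output indices in $[k+1:n]$ (as the $D^k$ operator's domain requires) avoids any contribution from these switch points, so that the telescoping/summation identities used are valid. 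Neither is conceptually deep, but both require care to state cleanly.
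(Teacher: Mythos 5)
Your proposal is correct and follows essentially the same route as the paper: induction on $k$ through the recursion $\phi^k_j=\sum_{l\ge j}\phi^{k-1}_l/n$, with the factorization of $D^k$ into a first difference and a $(k-1)^{\rm st}$ difference absorbing the $1/n$, and the reduction from $\tf^k_j$ to $\phi^k_j$ because they differ only by elements of ${\rm span}\{\phi^l_l\}_{l\in[k]}\subseteq\N_k$. The only cosmetic differences are that you dispatch the $j\in[k]$ case by the global degree-$(j-1)$ polynomial argument instead of the induction hypothesis, and you apply $D^1$ to the atom first (via $D^1\phi^k_j=\phi^{k-1}_j/n$) whereas the paper applies $D^{k-1}$ termwise inside the sum and then $D^1$ to the resulting step function.
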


\begin{proof}
See Appendix \ref{vtv.pl.2.1}.
\end{proof}

As a consequence of Lemma \ref{vtv.l.2.1}, $\{\tf^j_j \}_{j \in [k]}$ span $ \N_k$ and $\{\tf^j_j\}_{j \in [k]}$ is an orthogonal basis for $\N_k$. Moreover $\{ \tf^k_j\}_{j \in [k+1:n]}$ span $ \N^{\perp}_k$.

By Lemma \ref{vtv.l.2.1} combined with Lemma 2.2 in \citet{orte19-1} about the Moore-Penrose pseudoinverse we obtain for the pseudoinverse $(D^k)^+$ that $ (D^k)^+= \{\tf^k_j\}_{j \in [k+1:n]}$.

With the dictionary $\tF^k$  and some coefficients $\{\beta_j\}_{j=k+1}^n$ we can write a vector $f_{\N^{\perp}_k}\in \N^{\perp}_k$ as $ f_{\N^{\perp}_k}=(D^k)^+ \beta $.
Then $ \beta= D^k f_{\N^{\perp}_k}$.

For $d=1$ we therefore obtain the following synthesis form of the estimator $\hf_{\N^{\perp}_k}$:
$$  \hf_{\N^{\perp}_k}= \sum_{j=k+1}^n \tf^k_j \hat{\beta}_j,$$
where
$$ \hat{\beta} = \argmin_{b \in \R^{n-k}} \left\{\norm{Y_{\N^{\perp}_k}- \sum_{j=k+1}^n b_j \tf^k_j }^2_2/n + 2 \lambda \1norm{b}  \right\}.$$

\subsection{Dictionary for general $d$}
Hereafter we fix $k \in [1: \min_{l \in [d]} n_l-1]$.

\begin{definition}[Partially orthonormalized dictonary in $d$-dimensions]\label{vtv.d.2.6}
The dictionary $\{\tf^k_{j_1, \ldots, j_d} \in \Rd\}_{1, \ldots, 1}^{n_1, \ldots, n_d}$ is defined as 
$$ \tf^k_{j_1, \ldots, j_d}= \tf^k_{j_1} \times \ldots \times \tf^k_{j_d},\ (j_1, \ldots, j_d) \in \ttimes{i \in [d]}[n_i].$$
\end{definition}

The dictionary $\{\tf^k_{j_1, \ldots, j_d}\}_{1, \ldots, 1}^{n_1, \ldots, n_d}$ is a collection of $d$-dimensional tensors with product structure.
By Lemma \ref{vtv.l.2.1} and the product structure, $\N_k^{\perp}= {\rm span}(\{\tf^k_{j_1, \ldots, j_d}\}_{k+1, \ldots, k+1}^{n_1, \ldots, n_d} )$.

For a tensor of coefficients $\{\beta_{j_1, \ldots, j_d}\}_{k+1, \ldots, k+1}^{n_1, \ldots, n_d}$, write
$$ f_{\N^{\perp}_k}= \sum_{k+1, \ldots, k+1}^{n_1, \ldots, n_d} \beta_{j_1, \ldots, j_d} \tf^k_{j_1, \ldots, j_d}.$$
Because of the product structure of $\tf^k_{j_1, \ldots, j_d}$ it holds that
$$ D^k f_{\N^{\perp}_k}= \sum_{k+1, \ldots, k+1}^{n_1, \ldots, n_d} \beta_{j_1, \ldots, j_d} (1_{\{j_1\}}\times \ldots \times 1_{\{j_d\}})= \beta.$$
From the fact that any candidate estimator has to belong to the space spanned by $Y_{\N^{\perp}_k}$, it follows that
$$ \hf_{\N^{\perp}_k}= \sum_{k+1, \ldots, k+1}^{n_1, \ldots, n_d} {\hat \beta}_{j_1, \ldots, j_d} \tf^k_{j_1, \ldots, j_d},$$
where
$$ \hat{\beta}=\argmin_{b \in \R^{(n_1-k)\times \ldots \times (n_d-k)}} \left\{ \norm{ Y_{\N^{\perp}_k}- \sum_{k+1, \ldots, k+1}^{n_1, \ldots, n_d} b_{j_1, \ldots, j_d} \tf^k_{j_1, \ldots, j_d}}_2^2/n+ 2 \lambda \1norm{b} \right\}.$$
The synthesis form of the estimator $\hf_{\N^{\perp}_k}$ is useful in two ways. Firstly, to determine the structure of the estimator by specifying the dictionary used to construct it. In our case, $\hf_{\N^{\perp}_k}$ is a linear combination of $d$-dimensional products of $(k-1)^{\rm th}$-order polynomials. Secondly, the dictionary  facilitates the approximation of some orthogonal projections in the proof of adaptive $\ell^0$-rates and not-so-slow $\ell^1$-rates.
\section{Adaptivity}\label{vtv.s.3}

In this section we first expose some notation for our main result. After having exposed our main result, Theorem \ref{vtv.t.3.1}, we work out explicit expressions for the bound on the antiprojections ${\tilde v}$, the inverse scaling factor ${\tilde \gamma}$ and the noise weights $v$. Finally, we show a bound on the effective sparsity via a suitable interpolating tensor. In Subsection \ref{vtv.ss.3.5} we put the pieces together to prove Theorem \ref{vtv.t.3.1}.

Fix $ k \in [1:\min_{i \in [d]}n_i-1]$ and an active set $S \subseteq \ttimes{i \in [d]} [k+2:n_i-k]$.

To every jump location in $S$, we associate a hyperrectangle of $k^d$ additional jump locations to obtain the enlarged active set $\tS$, defined as
$$ \tS:= \bigcup_{m=1}^s (\ttimes{i \in [d]} [t_{i,m}:t_{i,m}+k-1]).$$

\begin{definition}[Hyperrectangular tessellation]\label{vtv.d.3.1} 
We call $\{ R_m \}_{m=1}^s $ a hyperrectangular tessellation of $\ttimes{i \in [d]} [k+1: n_i ]$
if it satisfies the following conditions:\\
$\bullet$ each $R_m \subseteq \ttimes{i \in [d]} [k+1: n_i ] $ is a hyperrectangle ($m=1 , \ldots , s $);\\
$\bullet$ $ \cup_{m=1}^s R_m = \ttimes{i \in [d]} [k+1: n_i ] $;\\
$\bullet$ for all $m $ and $m^{\prime}\not= m $, the hyperrectangles $R_{m}$ and $R_{m^{\prime}} $ possibly share boundary points but not interior points;\\
$\bullet$ for all $m$, the points $\ttimes{i \in [d]} [t_{i,m}:t_{i,m}+k-1]$ are interior points of $R_m $.
\end{definition}

For a hyperrectangular tessellation $\{ R_m \}_{m=1}^s$ denote the vertices of the hyperrectangle $R_m$ by $(t_{1,m}^{z_1}, \ldots, t_{d,m}^{z_d}), (z_1, \ldots, z_d) \in \{-,+\}^d$ , for $m\in [s]$.

Moreover we define the distances of the jump locations from the vertices of their respective hyperrectangle and the respective set of indices  as
\begin{align*}
 d_{i,m}^- &:= (t_{i,m} - t_{i,m}^- ) ,&R_{i,m}^- &:= [  t_{i,m}^-: t_{i,m} ],\\
 d_{i,m}^0&:= k ,&R_{i,m}^0 &:=  [t_{i,m}:t_{i,m}+k-1],\\
  d_{i,m}^+ & := (t_{i,m}^+ - t_{i,m}-k+1 ),& R_{i,m}^+ & := [ t_{i,m}+k-1: t_{i,m}^+ ],
\end{align*}
for $i \in [d]$ and $m \in [s]$. 
Each hyperrectangle $R_m$ of the hyperrectangular tessellation $\{R_m\}_{m\in [s]}$ can be partitioned into $3^d$ hyperrectangles. 
Define, for all $(z_1, \ldots, z_d) \in \{-,0,+\}^d$,
$$ R_m^{z_1 \cdots z_d}:= R_{1,m}^{z_1} \times \ldots \times R_{d,m}^{z_d}, \ m \in [s] .$$
For $m \in [s]$, let
$$ d_m^{z_1\cdots z_d}:=d_{1,m}^{z_1}\cdot \ldots \cdot d_{d,m}^{z_d}, \ \{z_1, \ldots, z_d\}\in \{-,+\}^d.$$

We define the maximal  distance from  an (enlarged) jump location to the boundary of the corresponding rectangular region along the coordinate $i \in [d]$ as
$$ d_{i, {\rm max} } (S) := \max_{m \in [1:s] } \max\{ d_{i,m}^-, d_{i,m}^+ \}.$$

\begin{figure}[h]
\center
\begin{tikzpicture}
\draw (0,0) rectangle (10/2,10/2);

\draw (0,5/2) -- (6/2,5/2);
\draw (7.5/2,5/2) -- (10/2,5/2);

\draw (6/2,0) -- (6/2,5/2);
\draw (6/2,6.5/2) -- (6/2,10/2);

\draw (0,6.5/2) -- (6/2,6.5/2);
\draw (7.5/2,6.5/2) -- (10/2,6.5/2);

\draw (7.5/2,0) -- (7.5/2,5/2);
\draw (7.5/2,6.5/2) -- (7.5/2,10/2);

\foreach \x in {6/2,6.5/2,7/2,7.5/2} {
        \foreach \y in {5/2,5.5/2,6/2,6.5/2} {
            \node[circle,inner sep=2pt,fill=black] at (\x,\y){};
        }
    }

\draw[pattern=north west lines] (0,0) rectangle (6/2,5/2);
\draw[pattern=north west lines] (7.5/2,6.5/2) rectangle (10/2,10/2);
\draw[pattern=north east lines] (0,6.5/2) rectangle (6/2,10/2);
\draw[pattern=north east lines] (7.5/2,0) rectangle (10/2,5/2);

\node[fill=white,rounded corners=2pt] at (2.8/2,2.4/2) {$d_m^{+,-}$};
\node[fill=white,rounded corners=2pt] at (2.8/2,7.8/2) {$d_m^{-,-}$};
\node[fill=white,rounded corners=2pt] at (8.6/2,7.8/2) {$d_m^{-,+}$};
\node[fill=white,rounded corners=2pt] at (8.6/2,2.4/2) {$d_m^{+,+}$};


\node[circle,inner sep=2pt,fill=gray,label={[text=gray,shift={(-.95,-.75)}]$(t_{1,m}^+,t_{2,m}^-)$}] at (0,0) {};
\node[circle,inner sep=2pt,fill=gray,label={[text=gray,shift={(1.1,-.2)}]$(t_{1,m}^+,t_{2,m}^+)$}] at (10/2,0) {};
\node[circle,inner sep=2pt,fill=gray,label={[text=gray,shift={(0.9,-.2)}]$(t_{1,m}^-,t_{2,m}^+)$}] at (10/2,10/2) {};
\node[circle,inner sep=2pt,fill=gray,label={[text=gray,shift={(-.9,-.2)}]$(t_{1,m}^-,t_{2,m}^-)$}] at (0,10/2) {};

\node[circle,inner sep=2pt,fill=gray,label={[text=gray,shift={(-1.25,-.2)},fill=white]$(t_{1,m},t_{2,m}^-)$}] at (0,6.5/2) {};
\node[circle,inner sep=2pt,fill=gray,label={[text=gray,shift={(-1.9,-.7)},fill=white]$(t_{1,m}+k-1,t_{2,m}^-)$}] at (0,5/2) {};
\node[circle,inner sep=2pt,fill=gray,label={[text=gray,shift={(-.8,-.85)}]$(t_{1,m}^-,t_{2,m})$}] at (6/2,0) {};
\node[circle,inner sep=2pt,fill=gray,label={[text=gray,shift={(+1.4,-.85)}]$(t_{1,m}^+,t_{2,m}+k-1)$}] at (7.5/2,0) {};

\draw [decorate,decoration={brace,amplitude=10pt},xshift=-4pt,yshift=0pt] (0,0.05) -- (0,5/2-.05) node [midway,xshift=-0.8cm] { $d_{1,m}^+$};
\draw [decorate,decoration={brace,amplitude=10pt},xshift=-4pt,yshift=0pt] (0,6.5/2+.05) -- (0,10/2-.05) node [midway,xshift=-0.8cm] { $d_{1,m}^-$};
\draw [decorate,decoration={brace,amplitude=10pt,mirror},yshift=-4pt,xshift=0pt] (0+.05,-.5) -- (6/2-.05,-.5) node [midway,yshift=-0.8cm] { $d_{2,m}^-$};
\draw [decorate,decoration={brace,amplitude=10pt,mirror},yshift=-4pt,xshift=0pt] (7.5/2+.05,-.5) -- (10/2-.05,-.5) node [midway,yshift=-0.8cm] { $d_{2,m}^+$};
\end{tikzpicture}

\caption{A rectangle of the tessellation $\{R_m\}_{m=1}^s$ for $d=2$ and $k=4$}\label{vtv.f.3.1}

\end{figure}
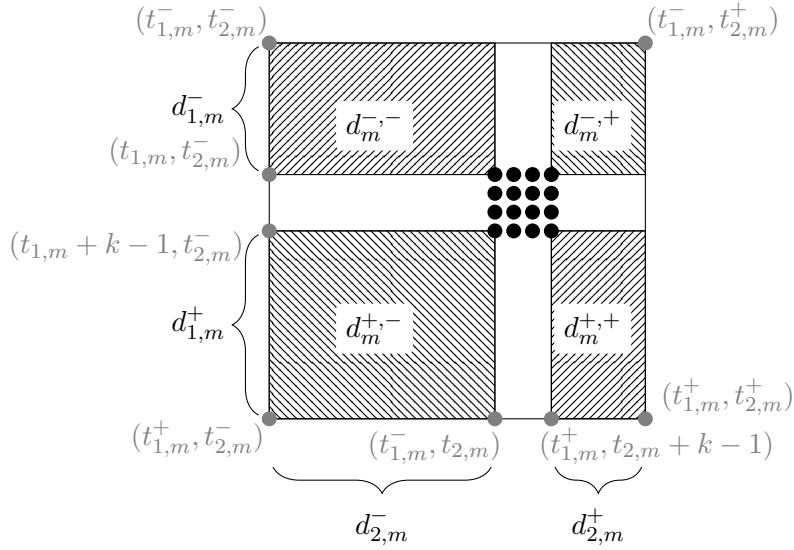

For $d=2$ and $k=4$, a rectangle of the tessellation is depicted in Figure \ref{vtv.f.3.1}.

\subsection{Main result}

We present our main result, that shows that trend filtering leads to an adaptive estimation of the $d$-dimensional margin $f^0_{\N^{\perp}_k}$ of $f^0$.

\begin{theorem}[Adaptivity of trend filtering]\label{vtv.t.3.1}
Fix $k\in\{1,2,3,4\} $ and choose $x,t>0$. Let $g \in \Rd$ be arbitrary. Let $S$ be an arbitrary subset of size $s:=\abs{S}$ of $\ttimes{i \in [d]}[k+1+(k+2)k:n_i-k+1-(k+2)k]$.
For a large enough constant $C>0$ that only dependends on $k$, choose
$$\lambda \ge C \ d \ \sqrt{\sum_{i=1}^d \left(\frac{d_{i,\max}(S)}{n_i} \right)^{2k-1}}\  \lambda_0(t) .$$
Then, with probability at least $1-e^{-x}-e^{-t}$, it holds that
$$\norm{(\hf-f^0)_{\N^{\perp}_k}}^2_2/n \le \norm{g-f^0_{\N^{\perp}_k}}^2_2/n + 4 \lambda \norm{(D^k g)_{-S}}_1+ \frac{2\sigma^2}{n} \left(\sqrt{x}+ \sqrt{ks} \right)^2$$
$$+ \bigo\left(\lambda^2 \left(\sum_{i=1}^d \log(e d_{i,\max}(S))\right) \sum_{m=1}^s \sum_{z \in \{-,+\}^d} \left(\frac{n}{d_m^z} \right)^{2k-1} \right).$$
In particular the constraint on $C$ is
$$ C \ge \frac{k^{\frac{2k-1}{2}}}{a_0} \text{ with } a_0= \begin{cases} 1, & k=1, \\ 8\sqrt{2}/7\approx 1.62, & k=2, \\144\sqrt{3}/76\approx 3.28 , & k=3, \\ 10.10, & k=4, \end{cases}$$
as $\min_{i \in [d]} \min_{m \in [s]}  \min \{d_{i,m}^-, d_{i,m}^+ \} \to \infty$.
\end{theorem}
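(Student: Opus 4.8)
The plan is to follow the standard oracle-inequality machinery for analysis estimators in the style of \citet{dala17} and \citet{vand19,orte19-4}, instantiated for the operator $D^k$ in $d$ dimensions. The starting point is the basic inequality obtained from the definition of $\hf_{\N^\perp_k}$ as a minimizer: comparing the objective at $\hf$ and at an arbitrary competitor $g$, and expanding the squared norms, yields
$$ \norm{(\hf-f^0)_{\N^\perp_k}}_2^2/n \le \norm{(g-f^0)_{\N^\perp_k}}_2^2/n + \tfrac{2}{n}\langle \epsilon_{\N^\perp_k}, (\hf-g)_{\N^\perp_k}\rangle + 2\lambda\big(\TV_k(g)-\TV_k(\hf)\big). $$
The whole game is then to control the inner-product (noise) term. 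Splitting $D^k(\hf-g)$ into its part on the active set $S$ and its complement, one uses the dual-norm bound: on $-S$ the penalty difference $\TV_k(g)-\TV_k(\hf)$ absorbs $\norm{(D^k g)_{-S}}_1 - \norm{(D^k\hf)_{-S}}_1$, while on $S$ one needs $\tfrac{1}{n}\langle \epsilon_{\N^\perp_k}, (D^k)^+ w\rangle$ to be dominated by $\lambda\norm{w}_1$ up to a term quantified by the \emph{effective sparsity} $\Gamma^2_{D^k}(S)$. Here the dictionary $\{\tf^k_{j_1,\dots,j_d}\}$ from Section \ref{vtv.ss.2.3} enters: because $(D^k)^+ = \{\tf^k_{j_1,\dots,j_d}\}_{j_i>k}$, the relevant Gaussian vector is $\tfrac{1}{n}\langle \epsilon, \tf^k_{j_1,\dots,j_d}\rangle$, whose coordinate-wise standard deviations are the \emph{noise weights} $v_{j_1,\dots,j_d} = \norm{\tf^k_{j_1,\dots,j_d}}_2/n$ (up to $\sigma$). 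The projection argument of \citet{dala17} replaces the crude supremum over all $p$ coordinates by a supremum over the much smaller set dictated by $S$, which is where the \emph{inverse scaling factor} ${\tilde\gamma}$ and the \emph{bound on the antiprojections} ${\tilde v}$ come in; this is exactly what allows $\lambda$ to be taken of smaller order than $\lambda_0$.

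The three quantities ${\tilde v}$, ${\tilde\gamma}$, $v$ are the technical heart and must be computed explicitly for $D^k$ in $d$ dimensions. Since $D^k$ and the dictionary have product structure, each of these factorizes over coordinates into its one-dimensional counterpart, which is known from \citet{vand19}: a single $k^{\rm th}$-order difference direction contributes a factor of order $(d_{i,\max}/n_i)^{k-1/2}$ to the antiprojection bound and of order $(n/d_m^z)^{k-1/2}$ to the per-jump effective-sparsity contribution. Multiplying over the $d$ coordinates, summing over jump locations $m\in[s]$ and over orthants $z\in\{-,+\}^d$, and collecting the logarithmic factors $\sum_i \log(e\,d_{i,\max}(S))$ from the maximal-inequality bounds, produces precisely the claimed $\lambda\ge C d \sqrt{\sum_i (d_{i,\max}/n_i)^{2k-1}}\,\lambda_0(t)$ threshold and the $\bigo\big(\lambda^2 (\sum_i\log(e\,d_{i,\max}))\sum_m\sum_z (n/d_m^z)^{2k-1}\big)$ remainder. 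The factor $\sqrt{ks}$ in the probabilistic term comes from bounding the $\ell^2$-norm of the projection of $\epsilon$ onto the $k^d$-dimensional-per-jump space $\tS$ via a $\chi^2$-tail, and the constant $a_0$ is the $k$-dependent constant from the one-dimensional interpolating-polynomial construction that governs how tightly a $(k-1)^{\rm th}$-order polynomial can interpolate the sign pattern on $S$; its explicit values for $k=1,2,3,4$ are inherited from the corresponding computation in \citet{vand19}.

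The bound on the effective sparsity $\Gamma^2_{D^k}(S)$ is obtained by exhibiting an \emph{interpolating tensor}: a tensor $w$ with product structure whose one-dimensional factors are the interpolating polynomials of \citet{vand19} adapted to the grid $S$, chosen so that $D^k w$ matches a prescribed sign pattern on $\tS$ while keeping $\norm{w}_2^2/n$ small. The tessellation $\{R_m\}$ and the partition of each $R_m$ into $3^d$ sub-boxes $R_m^{z_1\cdots z_d}$ is the bookkeeping device that localizes this construction: on each orthant sub-box the interpolating tensor is a product of truncated monomials of degree $k-1$, whose squared $\ell^2$-mass over $R_m^{z_1\cdots z_d}$ scales like $d_m^{z_1\cdots z_d}\,(n/d_m^z)^{2k-1}$ per the one-dimensional estimate, and these add up across $m$ and $z$ to give the remainder term. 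I expect the main obstacle to be this interpolating-tensor step for general $d$: one must verify that the product of one-dimensional interpolants simultaneously realizes the correct sign on \emph{all} $k^d$ enlarged jump points of each hyperrectangle while remaining controlled on the boundary sub-boxes, and that the orthogonality of the dictionary factors is enough to make the cross terms vanish — this is the point where the restriction $k\in\{1,2,3,4\}$ becomes essential, since it is only for these $k$ that the one-dimensional interpolating polynomials with the required derivative-matching properties are known to exist with the constant $a_0$ above. Finally, assembling all pieces into the displayed inequality and optimizing the constant $C$ as the minimal inter-jump distance tends to infinity gives the stated constraint $C\ge k^{(2k-1)/2}/a_0$; the details of this last limit are routine bookkeeping of lower-order corrections. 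The full argument is carried out in Subsection \ref{vtv.ss.3.5}.
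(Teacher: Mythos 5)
Your overall skeleton (basic inequality / oracle bound driven by an effective-sparsity term, bounded via an interpolating construction, with the Dalalyan-type projection argument allowing $\lambda$ below the universal choice) is the same as the paper's, which invokes Theorem \ref{vtv.t.2.3} and handles the correlated errors $\epsilon_{\N^{\perp}_k}$ by idempotence of the projection. The gap is in the step you treat as automatic: the claim that, because $D^k$ and the dictionary have product structure, the antiprojection bound and the effective sparsity ``factorize over coordinates'' and that ``multiplying over the $d$ coordinates'' yields the stated threshold. Only the \emph{projection} onto the span of the active atoms factorizes; the \emph{antiprojection} does not. The paper's Lemma \ref{vtv.l.3.3} shows (via the decomposition $\prod_i(a_i+b_i)-\prod_i b_i\le\sum_i a_i\prod_{l\neq i}c_l$) that $\norm{{\rm A}_{\tS}\tf^k_{j_1,\ldots,j_d}}_2^2/n\le\sum_{i=1}^d\tilde v_{i,m}^2(j_i)$: the per-coordinate contributions \emph{add}, they do not multiply. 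This is precisely why $\tilde\gamma\asymp d\sqrt{\sum_i(d_{i,\max}/n_i)^{2k-1}}$ (a ``sum of side lengths'' scaling) and why the paper's remark on dimensionality states that a volume/product scaling is not attainable. A product of one-dimensional factors $(d_{i,\max}/n_i)^{k-1/2}$ is not $\sqrt{\sum_i(d_{i,\max}/n_i)^{2k-1}}$, so your route does not justify the claimed choice of $\lambda$; as written it is internally inconsistent, and if carried out literally it would assert a much smaller $\tilde\gamma$ than is true.

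The same issue propagates to the interpolating tensor. You misidentify the noise weights as $\norm{\tf^k_{j_1,\ldots,j_d}}_2/n$; in the paper they are normalized antiprojection lengths, $v\ge\tilde v/\tilde\gamma$ with the sum form $v_{j_1,\ldots,j_d}=\frac1d\sum_i v_{i,m}(j_i)/C$ (Definition \ref{vtv.d.9}, Lemma \ref{vtv.l.3.4}). Correspondingly, the valid interpolating tensor is \emph{not} a pure product of one-dimensional interpolants: it is the average of $d$ product tensors, $w=\frac1d\sum_{i=1}^d\prod_{l=1}^d w_{l,i,m}$, where only the $i$-th factor must dominate the noise weight and is therefore built from $\omega$ with leading piece $x^{(2k-1)/2}$ (paying a $\log(ed_{i,\max})$ via Lemma \ref{vtv.l.3.6}), while the factors $l\neq i$ use the degree-$k$ polynomial $\rw$ and incur no logarithm. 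This asymmetric construction is what produces the factor $\sum_i\log(ed_{i,\max}(S))$ (rather than a product of logs) and ties the constraint $C\ge k^{(2k-1)/2}/a_0$ to the one-dimensional $\omega$ only; together with the monotonicity checks of $\omega$ and $\rw$ for $k\in\{1,2,3,4\}$ it is the substantive content of the proof. Your proposal either omits these computations or replaces them with a factorization that is false, so the remainder term and the $\lambda$-threshold in the statement are not actually established by your argument.
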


\begin{proof}
See Subsection \ref{vtv.ss.3.5}.
\end{proof}

By choosing $x \asymp t \asymp \log n$ in Theorem \ref{vtv.t.3.1} and by constraining  the active set $S$ to be a regular grid we retrieve  Theorem \ref{vtv.t.2.1}. In that case, since $S$ is a regular grid, we can choose $\lambda \asymp s^{-\frac{2k-1}{2d}} \lambda_0(\log(2n))$ and the oracle inequality has the rate
$$ \bigo\left( \frac{s^{\frac{2k(d-1)+1}{d}}}{n} \log(n/s) \log n \right) .$$

\begin{remark}[The role of the hyperrectangular tessellation]
Given an active set $S$, the choice of a hyperrectangular tessellation in Theorem \ref{vtv.t.3.1} can be seen as arbitrary.
\end{remark}

\subsection{Some definitions}

We introduce some quantities on which Theorem \ref{vtv.t.3.1} relies: the bound on the antiprojections ${\tilde v}$, the inverse scaling factor ${\tilde \gamma}$, the noise weights $v$, a sign configuration $q$ and the effective sparsity $\Gamma^2_{D^k}$.

Let  $\tS$  be the enlarged active set induced by some active set $S$. Let ${\rm P}_{\tS}$ be the orthogonal projection operator on ${\rm span}(\{\tf^k_{j_1, \ldots, j_d}\}_{(j_1, \ldots, j_d)\in \tS} )$.

\begin{definition}[Bound on the antiprojections]\label{vtv.d.2.7}
A bound on the antiprojections is a tensor $\tilde{v} \in \R^{(n_1-k)\times \ldots \times (n_d-k)}$ such that
$${\tilde v}_{j_1, \ldots, j_d} \ge \norm{({\rm I}-{\rm P}_{\tS}) {\tilde \phi}^k_{j_1, \ldots, j_d}}_2/ \sqrt{n}, \ \forall (j_1, \ldots, j_d) \in \ttimes{i \in [d]} [k+1:n_i].$$
\end{definition}

Let $\tilde v$ be a bound on the antiprojections.

\begin{definition}[Inverse scaling factor]\label{vtv.d.2.8}
The inverse scaling factor $\tilde{\gamma} \in \R$ is defined as
$\tilde{\gamma}:= \norm{\tilde{v}_{-\tS}}_{\infty}$.
\end{definition}

Let $\tilde v$ be a bound on the antiprojections and $\tilde \gamma$ the corresponding inverse scaling factor.

\begin{definition}[Noise weights]\label{vtv.d.9}
The noise weights $v\in \R^{(n_1-k)\times \ldots \times (n_d-k)}$ are defined as
 $v\ge  {\tilde v}/ {\tilde \gamma} \in [0,1]^{(n_1-k)\times \ldots \times (n_d-k)}$.
\end{definition}

We can now introduce the effective sparsity. The effective sparsity depends on a so-called ``sign configuration'', that is, on the sign pattern associated with the jump locations.

\begin{definition}[Sign configuration]\label{vtv.d.10}
Let $q \in [-1,1]^{(n_1-k)\times \ldots \times (n_d-k)}$ be s.t.
$$q_{j_1, \ldots, j_d}\in\begin{cases} \{-1,+1\},&  (j_1, \ldots, j_d)=t_m\in S, \\
\{q_{t_m}\}, & (j_1, \ldots, j_d) \in \ttimes{i \in [d]}[t_{i,m}:t_{i,m}+k-1],\ m\in [s],\\
[-1,1], & (j_1, \ldots, j_d)\notin \tS.\end{cases}$$
We call  $q_S \in \{-1,0,1\}^{(n_1-k)\times \ldots \times (n_d-k)}$ a sign configuration.
\end{definition}

The basic definition of effective sparsity depends on the sign configuration associated with $S$. One can however  remove this dependence by defining the effective sparsity as the maximum over all sign configurations.

\begin{definition}[Effective sparsity]\label{vtv.d.2.11}
Let an active set $S $, a sign configuration $q_S$  and noise weights $v$ be given. The effective sparsity $\Gamma^2_{D^k} ( S, {v}_{-S},q_S )\in \R$ is defined as
$$\Gamma_{D^k} ( S, {v}_{-S},q_S ) :=$$
$$:=\max  \left\{   \sum_{m=1}^s (q_S)_{t_m}  (D^k f)_{t_m}  - \| (1- {v})_{-S}  \odot ( D^k f)_{-S} \|_1  :\| f \|_2^2 / n=1 \right\}.$$
Moreover we write 
$$ \Gamma^2_{D^k} ( S, {v}_{-S}):= \max_{q_S}\ \Gamma^2_{D^k} ( S, {v}_{-S},q_S ).$$
\end{definition}

By the adaptive bound of Theorem 2.2 in \citet{vand19} (see also Theorem 2.1 in \citet{orte19-2} and Theorem 16 in \citet{orte19-4} modified with an enlarged active set), we know that bounding the effective sparsity is a sufficient condition for proving adaptation of $\hf_{\N^{\perp}_k}$.

\subsection{Effective sparsity via interpolating tensors}

To bound the effective sparsity we extend the technique by \citet{vand19} involving interpolating vectors to interpolating tensors, i.e., tensors that interpolates the signs of the jumps.

\begin{definition}[Interpolating tensor]\label{vtv.d.2.12}
Let $q_S \in \{ -1,0,1 \}^{(n_1-k)\times \ldots \times (n_d-k)} $ be a sign configuration and $v \in [0,1]^{(n_1-k)\times \ldots \times (n_d-k)}$ be a tensor of noise weights.
The tensor $w (q_S)\in \R^{(n_1-k)\times \ldots \times (n_d-k)}$ is called  an interpolating tensor for the sign configuration $q_S$ and the weights $v$ if it has the following properties:\\
$\bullet$ $w_{j_1, \ldots, j_d}(q_S) = (q_S)_{t_m} $, $ \forall (j_1, \ldots, j_d) \in \ttimes{i\in [d]} [t_{i,m}: t_{i,m}+k-1] $, $\forall m\in [s] $,\\
$\bullet$ $|w_{j_1,\ldots,j_d}(q_S)  | \le 1- {v}_{j_1,\ldots,j_d}  , \ \forall (j_1,\ldots,j_d)\in (\ttimes{i \in [d]} [ k : n_i]) \setminus \tS $.
\end{definition}

With the help of an interpolating tensor we can bound the effective sparsity, as the following lemma shows (Lemma 2.4 by \citet{vand19} in tensor form).

\begin{lemma}[Bounding the effective sparsity with an interpolating tensor]\label{vtv.l.2.2}
We have
$$\Gamma^2_{D^k}  (S, {v}_{-S},q_S ) \le n \min_{w(q_S)}  \|(D^k)' w(q_S)\|_2^2$$
where the minimum is over all interpolating tensors $w(q_S) $ for the sign configuration $q_S$.  
\end{lemma}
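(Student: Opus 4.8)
The plan is a duality argument: for an arbitrary interpolating tensor $w=w(q_S)$ I bound the linear functional that defines the effective sparsity from above by $f\mapsto\langle w,D^kf\rangle$, then move $D^k$ onto $w$ by transposition and close with Cauchy--Schwarz. Throughout, write $(j)$ for a generic multi-index $(j_1,\ldots,j_d)$ and $\langle a,b\rangle$ for the tensor scalar product (the sum of the entries of $a\odot b$).

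Fix $f\in\Rd$ with $\norm{f}_2^2/n=1$, set $h:=D^kf$, and write $B_m:=\ttimes{i\in[d]}[t_{i,m}:t_{i,m}+k-1]$ for the $k^d$-block around the $m$-th jump, so that $\tS=\bigcup_{m=1}^s B_m$ and the $B_m$ are pairwise disjoint (as in the standing setup around $\tS$); in particular $\tS\setminus S=\bigcup_m (B_m\setminus\{t_m\})$. Fix an interpolating tensor $w=w(q_S)$ and split $\langle w,h\rangle$ according to $\tS$ and its complement. By the first bullet of Definition \ref{vtv.d.2.12}, $w\equiv(q_S)_{t_m}\in\{-1,+1\}$ on $B_m$, so the $B_m$-part contributes
$(q_S)_{t_m}\sum_{(j)\in B_m}h_{(j)}\ge (q_S)_{t_m}h_{t_m}-\sum_{(j)\in B_m\setminus\{t_m\}}\abs{h_{(j)}}$;
summing over $m$ gives $\sum_{(j)\in\tS}w_{(j)}h_{(j)}\ge\sum_{m=1}^s (q_S)_{t_m}h_{t_m}-\sum_{(j)\in\tS\setminus S}\abs{h_{(j)}}$. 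Off $\tS$ the second bullet gives $\abs{w_{(j)}}\le 1-v_{(j)}$, hence $\sum_{(j)\notin\tS}w_{(j)}h_{(j)}\ge-\sum_{(j)\notin\tS}(1-v_{(j)})\abs{h_{(j)}}$. Adding the two bounds and using that the noise weights are $0$ on $\tS$ (a legitimate choice, since the antiprojections $({\rm I}-{\rm P}_{\tS})\tf^k_{(j)}$ vanish on $\tS$), so that $(1-v)_{-S}$ carries weight $1$ on every index of $\tS\setminus S$, the right-hand side is exactly $\sum_{m=1}^s(q_S)_{t_m}h_{t_m}-\norm{(1-v)_{-S}\odot h_{-S}}_1$. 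Therefore
\[
\sum_{m=1}^s (q_S)_{t_m}(D^kf)_{t_m}-\norm{(1-v)_{-S}\odot(D^kf)_{-S}}_1\ \le\ \langle w,D^kf\rangle\ =\ \langle (D^k)'w,f\rangle\ \le\ \norm{(D^k)'w}_2\,\norm{f}_2 .
\]
Since $\norm{f}_2=\sqrt n$, taking the maximum over all admissible $f$ gives $\Gamma_{D^k}(S,v_{-S},q_S)\le\sqrt n\,\norm{(D^k)'w}_2$; as $w$ was an arbitrary interpolating tensor for $q_S$, minimize over $w$ and square to obtain $\Gamma^2_{D^k}(S,v_{-S},q_S)\le n\min_{w(q_S)}\norm{(D^k)'w(q_S)}_2^2$.

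The one genuinely delicate point is the first step: absorbing the ``enlarged'' contributions, where $w$ is pinned to the jump sign on a whole $k^d$-block $B_m$ rather than at the single index $t_m$, into the weighted $\ell^1$ term $\norm{(1-v)_{-S}\odot(D^kf)_{-S}}_1$. This goes through precisely because $\tS$ is the disjoint union of the $B_m$ and the noise weights vanish on $\tS$, so the penalty has exactly the $k^d-1$ unit-weight indices in $B_m\setminus\{t_m\}$ available to dominate the $k^d-1$ extra summands produced when $(q_S)_{t_m}\sum_{(j)\in B_m}h_{(j)}$ is expanded. Everything afterwards — transposing $D^k$ and applying Cauchy--Schwarz — is immediate, and one only has to keep the indexing conventions consistent so that $\langle w,D^kf\rangle$ and $\langle (D^k)'w,f\rangle$ refer to the same bilinear pairing.
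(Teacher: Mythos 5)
Your argument is correct and is essentially the paper's own proof: bound the functional defining $\Gamma_{D^k}(S,v_{-S},q_S)$ by the pairing $\langle w,D^kf\rangle$, transpose $D^k$ onto $w$, and finish with Cauchy--Schwarz, then minimize over interpolating tensors. Your explicit treatment of the block indices $\tS\setminus S$ (using that the noise weights vanish on $\tS$, so the unit weights there absorb the extra terms) is exactly the step the paper performs implicitly when it replaces $(1-v)_{-S}$ by $w(q_S)_{-S}$ in the weighted $\ell^1$ term.
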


\begin{proof}
It holds that
\begin{eqnarray*}
&&\sum_{m=1}^s (q_S)_{t_m} (D^k f)_{t_m} - \norm{(1-v)_{-S} \odot (D^k f)_{-S}}_1\\
& \le & \sum_{m=1}^s (q_S)_{t_m} (D^k f)_{t_m} - \norm{w(q_S)_{-S} \odot (D^k f)_{-S}}_1\\
& \le & \sum_{2, \ldots, 2}^{n_1, \ldots, n_d} w(q_S)_{j_1, \ldots, j_d} (D^k f)_{j_1, \ldots, j_d}\\
& = & \sum_{1, \ldots, 1}^{n_1, \ldots, n_d} ((D^k)' w(q_S))_{j_1, \ldots, j_d}  f_{j_1, \ldots, j_d}\\
& \le & \sqrt{n} \norm{(D^k)' w(q_S)}_2 \norm{f}_2/\sqrt{n}.
\end{eqnarray*}
\end{proof}

\subsection{Requirements on an interpolating tensor}

Theorem \ref{vtv.t.3.1} follows by a bound on the effective sparsity obtained by Lemma \ref{vtv.l.2.2} with the help of an interpolating tensor.
In the definition of an interpolating tensor (cf. Definition \ref{vtv.d.2.12}), there is a constraint posed by the noise weights $v$.

Therefore, we now calculate in Subsection \ref{vtv.ss.3.3} a bound on the antiprojections ${\tilde v}$ to derive an appropriate inverse scaling factor ${\tilde \gamma}$ and noise weights ${v}$. In this way we will make explicit the constraints that the interpolating tensor has to satisfy in the specific case of tensor denoising with trend filtering.

After that, we will show in Subsection \ref{vtv.ss.3.4} an explicit form for the interpolating tensor for $k=\{ 1,2,3,4\}$ and derive the corresponding bound on the effective sparsity.

That bound on the effective sparsity combined with the fact that the interpolating tensor used indeed is an interpolating tensor for trend filtering will allow us to derive Theorem \ref{vtv.t.3.1} from Theorem \ref{vtv.t.2.3}.

\subsection{Antiprojections, inverse scaling factor and noise weights}\label{vtv.ss.3.3}

We start by finding a bound on the antiprojections $\tv$.

Define, for $m \in [s]$ and $i \in [d]$,
$$ {\tilde v}_{i,m}^2(j_i)= \begin{dcases} \left(\frac{t_{i,m}-j_i}{n_i}  \right)^{2k-1}, & j_i \in R_{i,m}^-= [t_{i,m}^-:t_{i,m}],\\
 0, & j_i \in R_{i,m}^0=[t_{i,m}:t_{i,m}+k-1],\\
\left(\frac{j_i-t_{i,m}-k+1}{n_i}  \right)^{2k-1}, & j_i \in R_{i,m}^+=[t_{i,m}+k-1:t_{i,m}^+].\\
\end{dcases}$$

Moreover, for $(j_1, \ldots, j_d) \in R_m$ we define 
$$ {\tilde v}_{j_i, \ldots, j_d}:= \sqrt{ \sum_{i=1}^d {\tilde v}^2_{i,m}(j_i)}.$$

\begin{lemma}[A valid bound on the antiprojections]\label{vtv.l.3.3}
For all $(j_1, \ldots, j_d) \in R_m$ and for all $m\in [s]$ it holds that
$$ \norm{{\rm A}_{\tS} \tf^k_{j_1, \ldots, j_d}}^2_2/n \le {\tilde v}^2_{j_i, \ldots, j_d},$$
i.e., the tensor ${\tilde v}\in \R^{(n_1-k) \times \ldots \times (n_d-k)}$ is a valid bound on the antiprojections.
\end{lemma}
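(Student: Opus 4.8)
The plan is to bound the antiprojection $\norm{{\rm A}_{\tS}\tf^k_{j_1,\ldots,j_d}}_2^2/n$ by exploiting the product structure of $\tf^k_{j_1,\ldots,j_d}=\tf^k_{j_1}\times\cdots\times\tf^k_{j_d}$ together with the fact that the enlarged active set $\tS$ is (locally) a product of one-dimensional index sets. First I would note that it suffices to construct \emph{some} tensor in ${\rm span}(\{\tf^k_{j_1,\ldots,j_d}\}_{(j_1,\ldots,j_d)\in\tS})$ that approximates $\tf^k_{j_1,\ldots,j_d}$ well in Frobenius norm, since the antiprojection is the \emph{minimal} such residual. The natural candidate is obtained coordinatewise: in the $i$-th coordinate, replace the one-dimensional factor $\tf^k_{j_i}$ (for $j_i\in R_{i,m}^{-}$ or $R_{i,m}^{+}$) by its projection onto the span of $\{\tf^k_{l}:l\in R_{i,m}^0\}$ — i.e.\ onto the span of the $k$ truncated monomials whose ``knots'' sit at the enlarged jump location — and keep the factor unchanged when $j_i\in R_{i,m}^0$. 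Taking the product over $i$ of these one-dimensional (anti)projections gives, via a telescoping/hybrid argument over coordinates, a tensor in ${\rm span}(\tf^k_{\tS})$ whose residual against $\tf^k_{j_1,\ldots,j_d}$ is controlled by the sum (over $i$) of the one-dimensional antiprojection norms.

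The key one-dimensional estimate I would establish is: for $j_i\in R_{i,m}^{-}$, the squared relative antiprojection $\norm{{\rm A}_{\{\tf^k_l:l\in R_{i,m}^0\}}\tf^k_{j_i}}_2^2/n_i$ is at most $((t_{i,m}-j_i)/n_i)^{2k-1}$, and symmetrically $((j_i-t_{i,m}-k+1)/n_i)^{2k-1}$ for $j_i\in R_{i,m}^{+}$, while it is exactly $0$ on $R_{i,m}^0$. This is precisely the content of the definition of ${\tilde v}_{i,m}^2(j_i)$. The estimate should follow from Lemma \ref{vtv.l.2.1} (which identifies $D^k\tf^k_j=1_{\{j\}}$ and hence pins down the pseudoinverse structure) together with an explicit Taylor/finite-difference computation: $\tf^k_{j_i}$ is a truncated $(k-1)^{\rm th}$-order monomial with knot at $j_i$, and projecting out the $k$ monomials with knot at $t_{i,m}$ leaves a residual supported near the knot whose $\ell^2$-mass scales like the $(2k-1)$-th power of the distance $|t_{i,m}-j_i|/n_i$ — the exponent $2k-1$ being the familiar scaling of a $(k-1)$-times-integrated indicator. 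I would organize this as a short sublemma, proved by Cauchy–Schwarz on the integrated-indicator representation or by direct computation for $k\in\{1,2,3,4\}$.

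Assembling: for $(j_1,\ldots,j_d)\in R_m$, write the hybrid tensors $\psi_0=\tf^k_{j_1,\ldots,j_d}$ and successively replace coordinate $i$ by its projected factor to get $\psi_d\in{\rm span}(\tf^k_{\tS})$; each step changes the tensor by a term of Frobenius norm (relative to $\sqrt{n}$) at most ${\tilde v}_{i,m}(j_i)$ times the product of the (unit-normalized) remaining factors, so by orthogonality of the error terms across coordinates — here I use that the one-dimensional residual in coordinate $i$ is orthogonal to everything in the span of $R_{i,m}^0$ in that coordinate — the total squared residual is $\sum_{i=1}^d {\tilde v}_{i,m}^2(j_i)={\tilde v}_{j_1,\ldots,j_d}^2$. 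Since ${\rm A}_{\tS}\tf^k_{j_1,\ldots,j_d}$ has the smallest norm among all residuals $\tf^k_{j_1,\ldots,j_d}-h$ with $h\in{\rm span}(\tf^k_{\tS})$, the bound follows. The main obstacle I anticipate is making the cross-coordinate orthogonality of the hybrid error terms fully rigorous — one must check that the one-dimensional antiprojection operators in different coordinates commute appropriately and that the Pythagorean decomposition of the total error into the $d$ coordinatewise contributions is exact (it is, because $\tS$ restricted to $R_m$ is a genuine Cartesian product $\ttimes_i(R_{i,m}^{-}\cup R_{i,m}^0\cup R_{i,m}^{+})$ intersected suitably, and the projections factorize); plus verifying the one-dimensional power-law bound with the stated constants, which for $k=\{1,2,3,4\}$ is a finite explicit computation but needs care with the $n_i^{k-1}$ normalization baked into $D^k$.
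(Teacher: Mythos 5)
Your overall strategy is sound and, at its core, the same as the paper's: everything is reduced to a one-dimensional computation for truncated monomials (a binomial/Taylor expansion of $\phi^k_{j_i}$ around the knot $t_{i,m}$, showing the part supported beyond the knot is spanned by the $k$ atoms of the enlarged block, so the residual is the ``head'' piece of length $\asymp t_{i,m}-j_i$ whose squared norm scales as $((t_{i,m}-j_i)/n_i)^{2k-1}$), and the $d$-dimensional bound is a sum of such coordinatewise contributions. Your assembly step is genuinely different from the paper's and is valid: the paper subtracts only the product of the ``tail'' parts $\prod_i b_i$ and bounds the residual slab by slab, using the crude bound $c_l$ in the other coordinates, whereas you telescope coordinatewise and exploit that consecutive hybrid differences are orthogonal (the factor in coordinate $i$ is an antiprojection in one term and a projection in the other), together with $\norm{\tf^k_{j_l}}_2^2\le n_l$; this gives the same bound $\sum_i {\tilde v}^2_{i,m}(j_i)$ with a cleaner Pythagorean argument. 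Note also that the candidate you build lies in the span of the atoms indexed by $\ttimes_{i}[t_{i,m}:t_{i,m}+k-1]\subseteq \tS$, as required.

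Two points in your write-up need repair. First, $\tf^k_{j_i}$ is \emph{not} a truncated monomial: it is $\phi^k_{j_i}$ minus a global polynomial of degree $\le k-1$. The paper handles this by first reducing antiprojections for $\tF^k$ to antiprojections for $\Phi^k$ (Lemma 28 of \citet{orte19-4}, a one-line contraction argument using $\tf^k_j={\rm A}_{\N_k}\phi^k_j$); you need this reduction, or must carry the polynomial corrections along explicitly. Second, and more seriously, the case $j_i\in R^+_{i,m}$ is not obtained ``symmetrically'' from the $R^-$ case if you work with the left-truncated monomials: projecting $\phi^k_{j_i}$ onto the $k$ raw atoms with knots in $[t_{i,m}:t_{i,m}+k-1]$ only yields a residual of order $((j_i-t_{i,m})/n_i)^{2k-1}$, not $((j_i-t_{i,m}-k+1)/n_i)^{2k-1}$, and for $k\ge 3$ and $j_i$ just beyond the block the former exceeds the latter by a factor growing like $k^{2k-1}$, so the stated inequality would fail with that argument. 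The symmetry holds only modulo the nullspace: the paper's proof uses that the partially orthonormalized dictionary built from $\{1_{\{j\le j'\}}\}$ coincides (up to permutation and sign) with the one built from $\{1_{\{j\ge j'\}}\}$, so the $R^-$ computation can be rerun in the reversed dictionary to get the distance measured from $t_{i,m}+k-1$. In your coordinatewise formulation the fix is the same: since both the target $\tf^k_{j_i}$ and the block span are orthogonal to the polynomials, the relevant antiprojection equals the antiprojection of $\phi^k_{j_i}$ onto the block atoms \emph{together with} the polynomial space, and there the reversal argument gives the claimed exponent and constant. Finally, the lemma is stated for general $k$, so your fallback of a finite check for $k\in\{1,2,3,4\}$ would not suffice; the binomial-expansion route, which works for all $k$, is the one to use.
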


\begin{proof}
See Appendix \ref{vtv.pl.3.3}.
\end{proof}

Define, for $m \in [s]$ and $i \in [d]$,
$$ { v}_{i,m}^2(j_i)= \begin{dcases} \left(\frac{t_{i,m}-j_i}{d_{i,m}^-}  \right)^{2k-1}, & j_i \in R_{i,m}^-=[t_{i,m}^-:t_{i,m}],\\
 0, & j_i \in R_{i,m}^0=[t_{i,m}:t_{i,m}+k-1],\\
\left(\frac{j_i-t_{i,m}-k+1}{d_{i,m}^+}  \right)^{2k-1}, & j_i \in R_{i,m}^+=[t_{i,m}+k-1:t_{i,m}^+].\\
\end{dcases}$$

Moreover, for a constant $C=C(k)\ge 1$ we define  for $(j_1, \ldots, j_d) \in R_m$ and $m=1, \ldots, s$
\begin{equation}\label{vtv.e.3.1}
{ v}_{j_i, \ldots, j_d}:=\frac{1}{d} { \sum_{i=1}^d \frac{{ v}_{i,m}(j_i)}{C}}
\end{equation}
and
$$ {\tilde \gamma}= Cd \sqrt{\sum_{i=1}^d \left(\frac{d_{i,\max}(S)}{n_i} \right)^{2k-1}}.$$

\begin{lemma}[Valid noise weights] \label{vtv.l.3.4} 
For all $m \in [s]$ and for all $(j_1, \ldots, j_d) \in R_m$ it holds that
$${ \tilde v}_{j_i, \ldots, j_d} \le { v}_{j_i, \ldots, j_d} {\tilde \gamma},$$
i.e., the tensor $v \in \R^{(n_1-k) \times \ldots \times (n_d-k)}$ as defined in Equation \eqref{vtv.e.3.1} defines valid noise weights.
\end{lemma}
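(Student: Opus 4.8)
The plan is to verify the inequality ${\tilde v}_{j_1,\ldots,j_d} \le v_{j_1,\ldots,j_d}\,{\tilde\gamma}$ coordinate by coordinate, comparing the square of each side after substituting the explicit formulas. Fix $m\in[s]$ and $(j_1,\ldots,j_d)\in R_m$. By definition ${\tilde v}^2_{j_1,\ldots,j_d}=\sum_{i=1}^d {\tilde v}^2_{i,m}(j_i)$, so it suffices to dominate this sum by $\bigl(v_{j_1,\ldots,j_d}\,{\tilde\gamma}\bigr)^2$. The first step is to relate ${\tilde v}_{i,m}(j_i)$ to $v_{i,m}(j_i)$ one coordinate at a time: on $R_{i,m}^-$ one has ${\tilde v}^2_{i,m}(j_i)=\bigl((t_{i,m}-j_i)/n_i\bigr)^{2k-1}$ while $v^2_{i,m}(j_i)=\bigl((t_{i,m}-j_i)/d_{i,m}^-\bigr)^{2k-1}$, hence ${\tilde v}_{i,m}(j_i) = v_{i,m}(j_i)\,(d_{i,m}^-/n_i)^{(2k-1)/2}\le v_{i,m}(j_i)\,(d_{i,\max}(S)/n_i)^{(2k-1)/2}$, and symmetrically on $R_{i,m}^+$; on $R_{i,m}^0$ both are zero. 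Thus in every case ${\tilde v}_{i,m}(j_i)\le v_{i,m}(j_i)\,\bigl(d_{i,\max}(S)/n_i\bigr)^{(2k-1)/2}$.

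Next I would feed this into the sum of squares and use Cauchy–Schwarz (or simply the fact that a sum of products is bounded by a product of sums) to pull the geometry apart from the weights:
\begin{align*}
{\tilde v}^2_{j_1,\ldots,j_d} &= \sum_{i=1}^d {\tilde v}^2_{i,m}(j_i) \le \sum_{i=1}^d v^2_{i,m}(j_i)\left(\frac{d_{i,\max}(S)}{n_i}\right)^{2k-1}\\
&\le \left(\sum_{i=1}^d v_{i,m}(j_i)\right)^{\!2}\ \sum_{i=1}^d \left(\frac{d_{i,\max}(S)}{n_i}\right)^{2k-1},
\end{align*}
where the last step uses that each $v_{i,m}(j_i)\in[0,1]$ so $v^2_{i,m}(j_i)\le v_{i,m}(j_i)$, together with the elementary bound $\sum_i a_i b_i \le (\sum_i a_i)(\sum_i b_i)$ valid for nonnegative $a_i,b_i$ (taking $a_i = v_{i,m}(j_i)$ and $b_i=(d_{i,\max}(S)/n_i)^{2k-1}/\sum_{i'} v_{i',m}(j_{i'})$, or more simply bounding $v^2_{i,m}(j_i)\le (\sum_{i'}v_{i',m}(j_{i'}))\,v_{i,m}(j_i)$ and summing). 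Now recall from \eqref{vtv.e.3.1} that $v_{j_1,\ldots,j_d}=\frac{1}{Cd}\sum_{i=1}^d v_{i,m}(j_i)$, so $\sum_{i=1}^d v_{i,m}(j_i) = Cd\, v_{j_1,\ldots,j_d}$, and the definition ${\tilde\gamma}=Cd\sqrt{\sum_{i=1}^d (d_{i,\max}(S)/n_i)^{2k-1}}$ gives exactly ${\tilde v}^2_{j_1,\ldots,j_d}\le (Cd)^2 v^2_{j_1,\ldots,j_d}\sum_{i=1}^d(d_{i,\max}(S)/n_i)^{2k-1} = v^2_{j_1,\ldots,j_d}\,{\tilde\gamma}^2$, which is the claim.

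The only point that needs care — and the main (minor) obstacle — is the passage $\sum_i v^2_{i,m}(j_i) \le (\sum_i v_{i,m}(j_i))^2$: this uses $v_{i,m}(j_i)\in[0,1]$, which is immediate from the definition since $j_i\in R_{i,m}^-$ forces $0\le t_{i,m}-j_i\le d_{i,m}^-$ (and analogously on $R_{i,m}^+$), so each ratio lies in $[0,1]$ and raising to the power $(2k-1)/2>0$ keeps it in $[0,1]$. One should also note that the definition of $v_{j_1,\ldots,j_d}$ is consistent across the overlapping boundaries of the tessellation pieces $R_m$ (where a point may belong to several $R_m$), but since the lemma is stated "for all $m$ and all $(j_1,\ldots,j_d)\in R_m$" the argument above applies verbatim for each such $m$ separately, so no consistency check is needed here. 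Finally one records that $C\ge 1$ is used only to guarantee $v\in[0,1]^{(n_1-k)\times\cdots\times(n_d-k)}$ as required by Definition \ref{vtv.d.9}, since $v_{j_1,\ldots,j_d}\le \frac{1}{Cd}\cdot d\cdot 1 = 1/C\le 1$.
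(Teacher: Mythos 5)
Your proof is correct and follows essentially the same route as the paper: a coordinate-wise comparison $\tilde v_{i,m}(j_i)\le v_{i,m}(j_i)\,(d_{i,\max}(S)/n_i)^{(2k-1)/2}$ followed by an elementary aggregation over coordinates (the paper bounds the $\ell^2$-norm by the $\ell^1$-norm of $(\tilde v_{i,m}(j_i))_i$ and then inflates each factor to the root-sum, whereas you square and use $\sum_i a_ib_i\le(\sum_i a_i)(\sum_i b_i)$ — an equivalent manipulation). Both arguments then conclude by identifying $\sum_i v_{i,m}(j_i)=Cd\,v_{j_1,\ldots,j_d}$ and matching $\tilde\gamma$, so no gap.
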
 

\begin{proof}
See Appendix \ref{vtv.pl.3.4}.
\end{proof}

The constant $C\ge 1$ in Equation \eqref{vtv.e.3.1} can be chosen aribtrarily. Choosing a larger $C$ makes the noise weights smaller. As a result,  the requirements imposed on the interpolating tensor by the noise weights become weaker.

\subsection{Bound on the effective sparsity for trend filtering}\label{vtv.ss.3.4}

We now define an interpolating tensor $w=w(q)$ for any sign configuration $q_S$.

For  $(j_1, \ldots, j_d) \in R_m, m \in [s]$ and the same constant $C=C(k)>0$ as in the definition of the noise weights in Equation \eqref{vtv.e.3.1}, define the tensor
\begin{equation}\label{vtv.e.3.2}
w_{j_1, \ldots, j_d}(q_S):= \frac{1}{d} \sum_{i=1}^d \prod_{l=1}^d w_{l,i,m}(j_l),
\end{equation}
where,
\begin{align*}
w_{l,i,m}(j_l)&= q_{t_m},\  &j_l &\in R_{l,m}^0,& l\not=i,\\
 w_{l,i,m}(j_l)&\in [0,q_{t_m}], &j_l &\in R_{l,m}^- \cup R_{l,m}^+,& l\not=i,\\
w_{i,i,m}(j_i)&=q_{t_m},&  j_i &\in R_{i,m}^0,& l  =i,\\
w_{i,i,m}(j_i)&\le  q_{t_m} (1- {v_{i,m}(j_i)}/{C}),&  j_i &\in R_{l,m}^- \cup R_{l,m}^+,&  l  =i.
\end{align*}
What differentiates the case $l=i$ is that  $w_{i,i,m}$ has to satisfy the requirements imposed by the noise weights. For $l \not= i$ the only constraint imposed is that $\abs{w_{l,i,m}}\le 1$.
The tensor $w$ is a sum of terms with product structure if constrained to the set of indices of a hyperrectangle $R_m$.

We define $w_{l,i,m}^-:=\{w_{l,i,m}(j_l)\}_{j_l \in R^-_{i,m}}$ and $w_{l,i,m}^+:=\{w_{l,i,m}(j_l)\}_{j_l \in R^+_{i,m}}$.

\begin{lemma}[A valid interpolating tensor]\label{vtv.l.3.5}
For any given sign configuration $q_S$, the tensor $w=w(q_S)$ defined in Equation \eqref{vtv.e.3.2} is a valid interpolating tensor.
\end{lemma}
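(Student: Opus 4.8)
Here is my plan for proving Lemma \ref{vtv.l.3.5}, which asserts that the tensor $w=w(q_S)$ from Equation \eqref{vtv.e.3.2} is a valid interpolating tensor in the sense of Definition \ref{vtv.d.2.12}.

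\textbf{Plan.} There are two properties to verify: (i) the \emph{interpolation property}, namely $w_{j_1, \ldots, j_d}(q_S) = (q_S)_{t_m}$ for all $(j_1, \ldots, j_d) \in \ttimes{i\in[d]}[t_{i,m}: t_{i,m}+k-1] = R^{0 \cdots 0}_m$ and all $m \in [s]$; and (ii) the \emph{boundedness/noise-weight property}, namely $|w_{j_1, \ldots, j_d}(q_S)| \le 1 - v_{j_1, \ldots, j_d}$ for all $(j_1, \ldots, j_d) \in (\ttimes{i\in[d]}[k:n_i]) \setminus \tS$. I would dispatch (i) first since it is the easy direction: on $R^{0\cdots 0}_m$ every coordinate $j_l$ lies in $R^0_{l,m}$, so for each $i$ we have $w_{i,i,m}(j_i) = q_{t_m}$ (the $l=i$ case on $R^0$) and $w_{l,i,m}(j_l) = q_{t_m}$ for $l \ne i$ (the $l\ne i$ case on $R^0$); hence $\prod_{l=1}^d w_{l,i,m}(j_l) = q_{t_m}^{\,d}$... but actually one must be slightly careful: since $q_{t_m} \in \{-1,+1\}$, $\prod_{l=1}^d w_{l,i,m}(j_l) = q_{t_m}^d$. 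If $d$ is even this is $1$, not $q_{t_m}$ — so in fact the product structure must be set up so that exactly the $l=i$ factor carries the sign and the others are taken to equal $|q_{t_m}|=1$ there, or the bookkeeping of signs is arranged differently. I would re-read the definition and state cleanly that on $R^0_{l,m}$ one sets $w_{l,i,m}(j_l)=q_{t_m}$ interpreted so that the total product over $l$ equals $q_{t_m}$ (this is the intended reading; with $q_{t_m}^2=1$ the product of the $d$ factors telescopes to a single $q_{t_m}$). Then averaging over $i \in [d]$ gives $w = \frac{1}{d}\sum_{i=1}^d q_{t_m} = q_{t_m}$, which is (i).

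\textbf{The main work is (ii).} Fix $(j_1, \ldots, j_d)$ in some $R_m$ but outside $\tS$; this means $(j_1,\ldots,j_d) \in R^{z_1\cdots z_d}_m$ for some $(z_1,\ldots,z_d) \ne (0,\ldots,0)$, and I'd split on whether each coordinate is in the ``$0$'' block or a ``$\pm$'' block. For a fixed summation index $i$, bound $\prod_{l=1}^d w_{l,i,m}(j_l)$ in absolute value. If $j_i \in R^\pm_{i,m}$, the $l=i$ factor contributes $\le (1 - v_{i,m}(j_i)/C)$ in absolute value and every $l\ne i$ factor contributes $\le 1$; so the product is $\le 1 - v_{i,m}(j_i)/C$. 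If $j_i \in R^0_{i,m}$, then the $l=i$ factor is exactly $q_{t_m}$ (absolute value $1$), and since $(z_1,\ldots,z_d)\ne 0$ there is some $l_0 \ne i$ with $j_{l_0} \in R^\pm_{l_0,m}$ where the factor $w_{l_0,i,m}(j_{l_0})$ can be chosen to contribute a useful shrinkage; here I would invoke the freedom $w_{l,i,m}(j_l)\in[0,q_{t_m}]$ for $l\ne i$ on the $\pm$ blocks and \emph{specify} a concrete choice, e.g. $w_{l,i,m}(j_l) = q_{t_m}(1 - v_{l,m}(j_l)/C)$ for $l\ne i$ as well (which lies in $[0,q_{t_m}]$ since $0 \le v_{l,m}/C \le 1$), so that the product is $\le \prod_{l: j_l \in R^\pm}(1-v_{l,m}(j_l)/C)$. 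Either way, for each $i$ one obtains $|\prod_l w_{l,i,m}(j_l)| \le 1 - v_{i,m}(j_i)/C$ whenever $j_i \in R^\pm_{i,m}$, and $|\prod_l w_{l,i,m}(j_l)| \le 1$ always. Averaging, $|w_{j_1,\ldots,j_d}| \le \frac1d\sum_{i=1}^d\big(1 - \ind\{j_i \in R^\pm_{i,m}\} v_{i,m}(j_i)/C\big) = 1 - \frac{1}{dC}\sum_{i=1}^d v_{i,m}(j_i)$, using $v_{i,m}(j_i)=0$ on $R^0_{i,m}$. The right-hand side is exactly $1 - v_{j_1,\ldots,j_d}$ by Equation \eqref{vtv.e.3.1}, establishing (ii).

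\textbf{Anticipated obstacle.} The delicate point — and the one I'd spend the most care on — is the sign bookkeeping in the product $\prod_{l=1}^d w_{l,i,m}(j_l)$ and making sure the chosen factors genuinely satisfy all the stated membership constraints simultaneously (the $l=i$ factor must respect $w_{i,i,m}(j_i)\le q_{t_m}(1-v_{i,m}(j_i)/C)$ with the correct sign, while the $l\ne i$ factors must sit in $[0,q_{t_m}]$), so that the product's absolute value behaves monotonically as claimed. A second subtlety is continuity/consistency of $w$ across the shared boundaries of adjacent hyperrectangles $R_m$, $R_{m'}$: on a shared face the two local definitions must agree (or at least the resulting tensor must be well-defined), which follows because on such a face the relevant coordinate lies in the $R^\pm$ extreme where $v_{i,m}=v_{i,m'}$ takes its boundary value and the interpolation is pinned down; I would note this but not belabor it, since the lemma only asserts existence of a valid $w$ and the bounds above hold pointwise regardless of which $R_m$ a boundary point is assigned to.
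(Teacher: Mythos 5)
Your proposal is correct and follows essentially the same route as the paper's own proof: for each summand $i$ you bound $\abs{\prod_{l=1}^d w_{l,i,m}(j_l)}$ by $1 - v_{i,m}(j_i)/C$ (using $\abs{w_{l,i,m}}\le 1$ for $l\ne i$ and $v_{i,m}=0$ on $R^0_{i,m}$, which makes your extra $l_0$-detour unnecessary), average over $i\in[d]$, and identify the result with $1-v_{j_1,\ldots,j_d}$ via Equation \eqref{vtv.e.3.1}, exactly as the paper does for $q_{t_m}=1$ with the case $q_{t_m}=-1$ handled by symmetry. Your observation that a literal reading of Equation \eqref{vtv.e.3.2} gives the product $q_{t_m}^d$ on the interpolation blocks is a fair point about the definition's sign bookkeeping that the paper's proof glosses over, and your resolution coincides with the intended construction.
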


\begin{proof}
See Appendix \ref{vtv.pl.3.5}.
\end{proof}

\subsubsection{Matching derivatives}\label{vtv.sss.3.4.1}

We now want to find the explicit form of an appropriate interpolating tensor $w$, to apply in Lemma \ref{vtv.l.2.2}. We first consider continuous versions $\omega(x)$, respectively. $\rw(x)$, of the  vectors $w_{i,i,m}^{-}$ and $w_{i,i,m}^{+}$, respectively. $w_{l,i,m}^{-}$ and $w_{l,i,m}^{+}$ for $ l \not =i$, on a mock interval $x \in [0,1]$. We then set
\begin{align*}
w_{i,i,m}^-(j_i)&:= \omega\left(\frac{t_{i,m}-j_i}{d_{i,m}^-} \right), & j_i \in R^-_{i,m},\\
w_{i,i,m}^+(j_i)&:= \omega\left(\frac{j_i-t_{i,m}-k+1}{d_{i,m}^+} \right), & j_i \in R^+_{i,m},\\
w_{l,i,m}^-(j_l)&:= \rw\left(\frac{t_{l,m}-j_l}{d_{l,m}^-} \right), & j_l \in R^-_{l,m},\\
w_{l,i,m}^+(j_l)&:= \rw\left(\frac{j_l-t_{l,m}-k+1}{d_{l,m}^+} \right), & j_l \in R^+_{l,m},
\end{align*}
for $i \in [d], l \not= i, m \in [s]$.

We aim to find a form of $\omega$ and $\rw$ giving place to continuous functions with $k-1$ continuous derivatives and piecewise constant $k^{\rm th}$ derivative. Moreover, these functions have to be interpolating between the jump location ($x=0$) and the border ($x=1$). We guarantee that they interpolate the signs of the jumps by restricting to polynomials with
\begin{align*}
\omega(0)&=1, & \omega(1)&=0, &  \\
\rw(0)&=1, & \rw(1)&=0, & \rw(x)&= 1- \rw(1-x), x \in [0,1].
\end{align*}
The discretized version of these polynomials will vanish at the boundaries of the hyperrectangles while it will have the value $1$ at the indices belonging to the enlarged active set $\tS$, guaranteeing the interpolation of the signs of the jump locations. Moreover, we will have to choose the constant $C>0$ in Equation \eqref{vtv.e.3.1} such that the noise weights are made small enough for the interpolating polynomial to satisfy the conditions of Lemma \ref{vtv.l.3.5}.

To obtain interpolating polynomials $\omega$ and $\rw$, we split the interval $[0,1]$ into an adequate number of subintervals. We then choose $\omega$ and $\rw$ to be made of polynomial pieces of order at most $k$. The exception is the first subinterval $[0,x_1], x_1\in (0,1]$ for $\omega$, where we choose $\omega(x)=1-a_0x^{\frac{2k-1}{2}}$. We then find the explicit values of the coefficients of the polynomials by derivatives matching, as in \citet{vand19}. More details on derivatives matching are given in the Appendix \ref{vtv.match.derivatives}.
 
To guarantee that $\omega$ and $\rw$ can give place to interpolating tensors, one has to check that derivative matching renders a piecewise polynomial which is monotone. Monotonicity combined with the constraints $\omega(0)=\rw(0)=1$ and $\omega(1)=\rw(1)=0$ ensures that $\abs{\omega(x)}\le 1$ and $\abs{\rw(x)}\le 1$.
 
Monotone interpolating polynomials $\omega$ and $\rw$ and a large enough $C$ in the tuning parameter are sufficient conditions for a valid interpolating tensor. In particular, given that $\omega$ is monotone, we require that
\begin{equation}\label{vtv.e.3.4}
C \ge k^{\frac{2k-1}{2}}/a_0 \text{ as } \min_{i \in [d]} \min_{m \in [s]}  \min \{d_{i,m}^-, d_{i,m}^+ \} \to \infty.
\end{equation}
Note that for the construction of $\rw$, we do not have any constraint given by the antiprojections ${\tilde v}$, the noise weights $v$ and the inverse scaling factor ${\tilde \gamma}$. Therefore, we can take the dependence on $x^k$ instead of $x^{\frac{2k-1}{2}}$. This saves a logarithmic term, not visible in Lemma \ref{vtv.l.3.7}, which only contains the logarithmic terms stemming from $\omega$. Indeed, as Lemma \ref{vtv.l.3.6} in Appendix \ref{vtv.pl.3.6} shows, partial integration of a $k^{\rm th}$-order polynomial does not incur in log terms, while partial integration of $x^{\frac{2k-1}{2}}$ does so. We have to choose the worse dependence on $x^{\frac{2k-1}{2}}$ for $\omega$ though, because  $\omega$ has to respect the constraints posed by the noise weights.

\subsubsection{Show a bound on the effective sparsity}

We now show a bound on the effective sparsity, using a ``candidate'' interpolating tensor generated from the discretizations of $\omega$ and $\rw$ whose construction has been exposed above. 
We call it ``candidate'' interpolating tensor because we have not yet shown that $\omega$ and $\rw$ are monotone.
For the moment we assume that matching derivatives renders monotone $\omega$ and $\rw$. We check the monotonicity for $k=\{1,2,3,4\}$ in the next subsection.

To make the notation and the computation steps lighter, we neglect the constants and use the order notation $\bigo$ instead.

Since the sign configuration $q_S$ is typically unknown, we focus on finding an upper bound on the effective sparsity that does not depend on the sign configuration $q_S$. Thus, the bound also accommodates for the worst-case sign configuration.

\begin{lemma}[Effective sparsity for trend filtering]\label{vtv.l.3.7}
Take the interpolating vector $w$ as defined in Equation \eqref{vtv.e.3.2}. Choose the  vectors $w_{i,i,m}^{-}$ and $w_{i,i,m}^{+}$, respectively $w_{l,i,m}^{-}$ and $w_{l,i,m}^{+}$ for $ l \not =i$, to be discretized versions of  $\omega(x)$ and $\rw(x)$ as in Subsubsection \ref{vtv.sss.3.4.1}.
Assume that $\omega(x)$ and $\rw(x)$ obtained by derivative matching are monotone.

For such an interpolating vector $w$, it holds that
\begin{equation*} 
\Gamma^2_D(S,v_{-S}) = \bigo \left( \left(\sum_{i=1}^d \log( e d_{i, \max}(S))\right) \sum_{m=1}^s  \sum_{z \in \{-,+\}^d} \left(\frac{n}{d_m^z}\right)^{2k-1} \right)
\end{equation*}
\end{lemma}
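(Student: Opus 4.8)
The plan is to combine Lemma~\ref{vtv.l.2.2} with an explicit estimate of $\|(D^k)' w(q_S)\|_2^2$ for the candidate interpolating tensor $w$ built from the discretized polynomials $\omega$ and $\rw$. Since $w$ restricted to each hyperrectangle $R_m$ is a sum over $i\in[d]$ of tensors with product structure, and $D^k=\prod_{i=1}^d D_i^k$ factorizes coordinate-wise, I would first reduce the $d$-dimensional computation to a product of one-dimensional ones. Concretely, for a fixed summand $\prod_{l=1}^d w_{l,i,m}(j_l)$ the $k^{\rm th}$-order total difference acts as $\prod_{l=1}^d D^k_{j_l} w_{l,i,m}$, so $\|(D^k)'w\|_2^2$ over $R_m$ splits (up to the constant $1/d^2$ and cross-terms controlled by Cauchy--Schwarz, absorbed into the $\bigo$) into a sum over $i$ and over the $3^d$ sub-hyperrectangles $R_m^{z_1\cdots z_d}$ of products $\prod_{l=1}^d \|D^k w_{l,i,m}\|_2^2$ taken over the relevant one-dimensional index range $R_{l,m}^{z_l}$.

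Next I would quote the one-dimensional estimates coming from derivative matching. For the factor with $l=i$, $w_{i,i,m}$ is a discretization of $\omega(x)=1-a_0 x^{\frac{2k-1}{2}}$ patched with degree-$k$ pieces, so by the matching-derivatives computation (Lemma~\ref{vtv.l.3.6} in the Appendix, and as in \citet{vand19}) its discrete $k^{\rm th}$ difference over an interval of length $d_{i,m}^{z}$ satisfies $\sum |D^k w_{i,i,m}|^2 = \bigo\big( (n_i/d_{i,m}^{z})^{2k-1}\,\log(e\,d_{i,m}^{z})\big)$; the logarithmic factor is exactly what the non-integer exponent $\frac{2k-1}{2}$ of $\omega$ produces upon repeated partial summation. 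For each factor with $l\neq i$, $w_{l,i,m}$ is a discretization of the degree-$k$ polynomial $\rw$, which by the same lemma contributes $\sum |D^k w_{l,i,m}|^2 = \bigo\big( (n_l/d_{l,m}^{z_l})^{2k-1}\big)$ \emph{without} a log term. On the middle block $R_{l,m}^0$ the relevant factor is constant equal to $q_{t_m}$, so its $k^{\rm th}$ difference vanishes and those sub-hyperrectangles drop out — leaving only the $2^d$ "corner" blocks indexed by $z\in\{-,+\}^d$, with sizes $d_m^z=\prod_l d_{l,m}^{z_l}$.

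Putting the pieces together: for a fixed $m$ and corner $z$, the contribution of summand $i$ is $\bigo\big(\log(e\,d_{i,m}^{z_i})\prod_{l=1}^d (n_l/d_{l,m}^{z_l})^{2k-1}\big) = \bigo\big(\log(e\,d_{i,\max}(S))\,(n/d_m^{z})^{2k-1}\big)$ using $\prod_l n_l = n$; summing over $i\in[d]$ replaces $\log(e\,d_{i,\max})$ by $\sum_{i=1}^d\log(e\,d_{i,\max}(S))$, summing over the $2^d$ corners gives the $\sum_{z\in\{-,+\}^d}$, and summing over $m\in[s]$ gives $\sum_{m=1}^s$. Multiplying by $n$ as in Lemma~\ref{vtv.l.2.2} and by the harmless $1/d^2$ from the definition of $w$ yields
$$\Gamma^2_{D^k}(S,v_{-S}) = \bigo\!\left(\left(\sum_{i=1}^d \log(e\,d_{i,\max}(S))\right)\sum_{m=1}^s\sum_{z\in\{-,+\}^d}\left(\frac{n}{d_m^{z}}\right)^{2k-1}\right),$$
which is the claim. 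The main obstacle is the one-dimensional bookkeeping inside Lemma~\ref{vtv.l.3.6}: tracking that $k-1$ rounds of partial summation against a degree-$k$ polynomial produce no logarithm while the boundary piece $x^{\frac{2k-1}{2}}$ produces exactly one, and checking that the cross-terms between the $d$ summands of $w$ and between overlapping boundary blocks of adjacent hyperrectangles are genuinely lower order (or have the same order) so they can be swallowed by $\bigo$; the monotonicity of $\omega,\rw$ needed to make $w$ a legitimate interpolating tensor is assumed here and verified for $k\in\{1,2,3,4\}$ in the next subsection.
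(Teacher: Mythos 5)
Your proposal follows essentially the same route as the paper's proof: bound $\Gamma^2_{D^k}$ via Lemma \ref{vtv.l.2.2}, exploit the product structure of $w$ on each $R_m$ to reduce to one-dimensional $k^{\rm th}$-order differences, invoke Lemma \ref{vtv.l.3.6} so that only the $\omega$-factor ($l=i$) contributes the logarithm while the $\rw$-factors do not, and sum over the $2^d$ corner blocks, $i\in[d]$ and $m\in[s]$. The only difference is one of explicitness: the junction/boundary contributions you flag as "swallowed by $\bigo$" are treated in the paper by an explicit decomposition into $\norm{D^k w(R_m)}_2^2$ plus boundary terms $\norm{w(B(R_m))}_2^2$ and junction sums, all of which are shown to be of the same order, so your sketch is sound.
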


\begin{proof}
See Appendix \ref{vtv.pl.3.7}.
\end{proof}

From Lemma \ref{vtv.l.3.6} and the matching of discrete derivatives, it follows that, if $\omega$ and $\rw$ are monotone and $C$ is chosen large enough
$$ \Gamma^2_D (S, v_{-S})= \bigo\left(\left(\sum_{i=1}^d \log(e d_{i,\max}(S))\right) \sum_{m=1}^s \sum_{z \in \{-,+\}^d} \left(\frac{n}{d_m^z} \right)^{2k-1} \right).$$
If the active set $S$ defines a regular grid we therefore have a bound on the effective sparsity of order
$$ \Gamma^2_D (S, v_{-S})= \bigo\left(s^{2k} \log(n/s) \right).$$

It only remains to check the monotonicity of $\omega$ and $\rw$. We will do this for $k=\{1, 2, 3, 4\}$. One can  check monotonicity for higher values of $k$ by solving (for instance at the computer) the appropriate system of equations and, say, graphically visualizing the result. We check monotonicity analytically for $k=\{1,2,3\}$ and computationally for $k=4$.

\subsubsection{Interpolating tensor for $k=1$}\label{vtv.sss.3.4.3}

For $k=1$
$$ \omega(x)= 1- \sqrt{x},\  x \in [0,1]. $$
and 
$$ \rw (x)= 1-x,\ x \in [0,1].$$
Both $\omega$ and $\rw$ are monotone.

\subsubsection{Interpolating tensor for $k=2$}\label{vtv.sss.3.4.4}

For $k=2$ 
$$ \omega(x)= \begin{dcases}
1- \frac{8\sqrt{2}}{7} x^{3/2},&  x \in [0,1/2],\\
\frac{12}{7}(1-x)^2,&  x \in [1/2,1],\end{dcases}
$$
and 
$$ \rw(x)= \begin{dcases}
1- \frac{8}{3} x^{2}, & x \in [0,1/4],\\
\frac{4}{3} \left(\frac{1}{2}-x\right)+ \frac{1}{2}, & x \in [1/4,1/2].\end{dcases}
$$
Both $\omega$ and $\rw$ are monotone.

\subsubsection{Interpolating tensor for $k=3$}\label{vtv.sss.3.4.5}

For $k=3$ 
$$ \omega(x)= \begin{dcases}
1- \frac{144\sqrt{3}}{76} x^{5/2},&  x \in [0,1/3],\\
\frac{585}{76}x^3-\frac{45}{4}x^2+\frac{255}{76}x+\frac{145}{228} ,&  x \in [1/3,2/3],\\
\frac{315}{76}(1-x)^3,&  x \in [2/3,1],
\end{dcases}
$$
and 
$$ \rw(x)= \begin{dcases}
1- \frac{16}{3} x^{3}, & x \in [0,1/4],\\
-\frac{16}{3} \left(\frac{1}{2}-x\right)^3+2 \left(\frac{1}{2}-x\right)+ \frac{1}{2}, & x \in [1/4,1/2].\end{dcases}
$$
Both $\omega$ and $\rw$ are monotone.

\subsubsection{Interpolating tensor for $k=4$}\label{vtv.sss.3.4.6}

For $k=4$ 
$$ \omega(x)= \begin{dcases}
1- 7.29 x^{7/2},&  x \in [0,1/4],\\
27.39 x^4 - 35.36 x^3 + 12.26 x^2 -2.01 x + 1.12, & x \in [1/4, 1/2],\\
-29.51 x^4 + 78.43 x^3 - 73.08 x^2 +26.44 x - 2.43, & x \in [1/2, 3/4],\\
10.10(1-x)^4, & x \in [3/4,1],\end{dcases}
$$
and 
$$ \rw(x)= \begin{dcases}
1- 16.2 x^{4}, & x \in [0,1/6],\\
27 x^4 - 28.8 x^3 + 7.2 x^2 -0.8 x + 1.03, & x \in [1/6, 1/3],\\
-7.2 \left(\frac{1}{2}-x\right)^3+2.2 \left(\frac{1}{2}-x\right)+ \frac{1}{2}, & x \in [1/3,1/2].\end{dcases}
$$
Both $\omega$ and $\rw$ are monotone.

\subsection{Proof of Theorem \ref{vtv.t.3.1}}\label{vtv.ss.3.5}

Theorem \ref{vtv.t.3.1} follows by combining Theorem \ref{vtv.t.2.3} with a bound on the effective sparsity.

Lemma \ref{vtv.l.3.7} uses Lemma \ref{vtv.l.2.2} to give us a bound on the effective sparsity holding for all sign configurations. This bound is based on a specific form of the interpolating tensor, obtained by derivative matching as explained in Subsection \ref{vtv.sss.3.4.1}. The interpolating tensor obtained by derivative matching is valid if the monotonicity of $\omega$ and $\rw$ is guaranteed. In Subsections \ref{vtv.sss.3.4.3}-\ref{vtv.sss.3.4.6} we check that the interpolating tensors obtained by derivative matching for $k=\{1,2,3,4\}$ satisfy the monotonicity requirement.

There is also a constraint posed by the noise weights and by the constant $C$ to satisfy.
Lemma \ref{vtv.l.3.3} gives a valid bound on the antiprojections. If we choose 
$$ {\tilde \gamma}= Cd \sqrt{\sum_{i=1}^d \left(\frac{d_{i,\max}(S)}{n_i} \right)^{2k-1}}$$
the noise weights given in Equation \eqref{vtv.e.3.1} are valid noise weights, according to Lemma \ref{vtv.l.3.4}. By Lemma \ref{vtv.l.3.5}, interpolating tensors of the form given in Equation \eqref{vtv.e.3.2} are valid interpolating tensors. The  tensor obtained by the discretization of the result of derivative matching has such a form (as $\min_{m \in [s]} \min_{i \in [d]} \min \{d_{i,m}^-, d_{i,m}^+ \} \to \infty$).

According to Equation \eqref{vtv.e.3.4} in Subsubsection \ref{vtv.sss.3.4.1} one has to choose
$$ C \ge k^{\frac{2k-1}{2}}/a_0 \text{ as }  \min_{i \in [d]} \min_{m \in [s]} \min \{d_{i,m}^-, d_{i,m}^+ \} \to \infty.$$
The values of $a_0$ are given in Subsubsections \ref{vtv.sss.3.4.3}-\ref{vtv.sss.3.4.6}.

Theorem 2.2 by \citet{vand19}, on which Theorem \ref{vtv.t.2.3} is based, uses a bound on the increments of empirical process $\{\epsilon'f, f \in \R^n\}$, where $\epsilon$ has i.i.d. entries. Theorem \ref{vtv.t.2.3}  involves in the background an empirical process, whose increments are given by
$$ \left\{\sum_{1, \ldots, 1}^{n_1, \ldots, n_d} ( \epsilon_{\N^{\perp}_k} \odot f)_{j_1, \ldots, j_d}, f \in \R^{n_1 \times \ldots \times n_d} \right\}$$
Note that the entries of $\epsilon_{\N^{\perp}_k}={\rm P}_{\N^{\perp}_k} \epsilon$ are correlated. However, by the idempotence of orthogonal projections, we can work with uncorrelated errors and instead restrict to tensors  $f_{\N^{\perp}_k} \in \N^{\perp}_k$. Indeed $\sum_{1, \ldots, 1}^{n_1, \ldots, n_d} ( \epsilon_{\N^{\perp}_k} \odot f)_{j_1, \ldots, j_d}=\sum_{1, \ldots, 1}^{n_1, \ldots, n_d} ( \epsilon \odot f_{\N^{\perp}_k})_{j_1, \ldots, j_d}$. This allows us to take over the arguments of Theorem 2.2 by \citet{vand19}.

\hfill $\square$

\begin{remark}[The influence of the dimensionality]
If we choose $\lambda \asymp \tilde{\gamma} \lambda_0(t)$, the rate of the oracle inequality is ${\tilde \gamma}^2 \sum_{m=1}^s \sum_{z \in \{-,+\}^d} ({n}/{d_m^z})^{2k-1}/n$, up to logarithmic factors. For simplicity, let $S$ define a regular grid. Then the (hyper-)volume of one of the $s$ hyperrectangles of the tessellation scales as $d_m^z\asymp n/s$. Hence the scaling $\sum_{m=1}^s \sum_{z \in \{-,+\}^d} ({n}/{d_m^z})^{2k-1}\asymp s^{2k}$. However ${\tilde \gamma}$, the maximal length of an antiprojection, scales as ${\tilde \gamma} \asymp (s^{-\frac{1}{d}})^{\frac{2k-1}{2}}$, where $s^{-\frac{1}{d}}\asymp d_{i, \max}/n_i$ is proportional to the side length of a hyperrectangle of the tessellation, up to the exponent $2k-1$. The influence of the dimensionality in the exponent of $s$ is a consequence of the different scaling of volume and side length of a hyperrectangle in $d$-dimensions. The (hyper-)volume scales as $s^{-1}$ while the side length scales as $s^{-\frac{1}{d}}$. The reason for this discrepancy is that we are not able to find an upper bound for the noise weights proportional the volume of the hyperrectangles, i.e., to the product of side lengths. The bound we obtain involves rather the sum of side lengths.
\end{remark}

\section{Not-so-slow $\ell^1$-rates}\label{vtv.s.4}

Theorem \ref{vtv.t.2.2} about not-so-slow rates for trend filtering is based on Theorem \ref{vtv.t.2.4}, where the choice of the active set $S$ is aribtrary. The criterion guiding choice of $S$ is to get an ``as small as possible'' value of the inverse scaling factor ${\tilde \gamma}$. Recall that the inverse scaling factor ${\tilde \gamma}$ is the maximal length of the antiprojection of a dictionary atom $\tf^k_{j_1, \ldots, j_d}$ onto the set of dictionary atoms indexed by the active set $S$, that ist ${\tilde \gamma}\ge \max_{(j_1, \ldots, j_d)\in [n_1]\times \ldots \times [n_d]} \norm{{\rm A}_{S} \tf^k_{j_1, \ldots, j_d}}_2/\sqrt{n}$.

The  active set $S$ could be chosen as a regular grid parallel to the coordinate axes. However, we will show that we can shorten the maximal length of the antiprojections by choosing an active set defining a so-called ``mesh grid'', whose construction we illustrate hereafter.

\subsection{Mesh grids}

Let $\delta \in \mathbb{N}$. For a coordinate $i \in [d]$, we define the set of indices $Z_i(l)$ such that
$$ Z_i(l)=\{ \delta^{\frac{d}{l}} \text{ equispaced indices in } [n_i]\}, l \in [d]$$
and
$$ Z_i(1) \supseteq Z_i(2) \supseteq \ldots \supseteq Z_i(d).$$
If, for any $l \in [d]$, $n_i$ is not a multiple of $\delta^{\frac{d}{l}}$, we relax the requirement on the indices to be approximately equispaced, i.e., the distance between all the indices has to be asymptotically of the same order.
For $i \in [d]$, we also define
$$ {\tilde Z}_i(l)= \bigcup_{h=0}^{k-1} \{Z_i(l)+h\}, l \in [d].$$
Let now $(l_1, \ldots, l_d)\in [d]^d$ be a tuple of indices. We define the set
$$ \calS:= \{(l_1, \ldots, l_d)\in [d]^d: \abs{\{i \in [d]:l_i\le z\}}\le z, \forall z \in [d] \}.$$

\begin{definition}[Mesh grid]\label{vtv.d.4.1}
A mesh grid $S$ is defined as
$$ S:= \bigcup_{(l_1,\ldots,l_d)\in \calS} \left(\ttimes{i \in [d]} Z_i(l_i)  \right).$$
\end{definition}

Figure \ref{vtv.f.4.1} illustrates  a mesh grid for $d=2$.

We now want to enlarge a mesh grid $S$ to allow us to handle $k^{\rm th}$-order trend filtering for $k>1$. 

\begin{definition}[Enlarged mesh grid]\label{vtv.d.4.2}
An enlarged mesh grid $\tS$ is defined as
$$ \tS:= \bigcup_{(l_1,\ldots,l_d)\in \calS} \left(\ttimes{i \in [d]} {\tilde Z}_i(l_i)  \right).$$
\end{definition}

Figure \ref{vtv.f.4.2} illustrates  an enlarged mesh grid for $d=2$ and $k=2$.

Let $s:= \abs{S}$ and ${\tilde s}:= \abs{\tS}$. It holds that $s \asymp {\tilde s}\asymp \prod_{i=1}^d \delta^{\frac{d}{i}}\asymp \delta^{dH(d)}$, where $H(d)= \sum_{i=1}^d 1/i$ is the $d^{\rm th}$ harmonic number. Therefore $\delta \asymp s^{\frac{1}{dH(d)}}$.

\begin{figure}[h]
\centering
  \begin{minipage}[t]{0.49\textwidth}
    \begin{tikzpicture}

    \foreach \x in {0,.1,...,6.1} {
        \foreach \y in {-.1,0,.1,..., 6.1} {
            \fill[color=gray] (\x,\y) circle (0.01);
        }
    }
\foreach \x in {1,3,...,5} {
        \foreach \y in {0,.5, ...,6} {
            \fill[color=black] (\x,\y) circle (0.03);
        }
    }
\foreach \x in {0, .5,...,6} {
        \foreach \y in {1,3,...,5} {
            \fill[color=black] (\x,\y) circle (0.03);
        }
    }
%
\foreach \x in {1,3,...,5} {
        \foreach \y in {1,3,...,5} {
            \fill[color=black] (\x,\y) circle (0.05);
        }
    }

\end{tikzpicture}

\caption{Mesh grid for $d=2$.}\label{vtv.f.4.1}
  \end{minipage}
\begin{minipage}[t]{0.49\textwidth}
\begin{tikzpicture}

    \foreach \x in {0,.1,...,6.2} {
        \foreach \y in {-.1,0,.1,..., 6.1} {
            \fill[color=gray] (\x,\y) circle (0.01);
        }
    }
\foreach \x in {1,3,...,5} {
        \foreach \y in {0,.5, ...,6} {
            \fill[color=black] (\x,\y) circle (0.03);
        }
    }
    
\foreach \x in {1,3,...,5} {
        \foreach \y in {-.1,0.4,.9, ...,5.9} {
            \fill[color=black] (\x,\y) circle (0.03);
        }
    }
    
\foreach \x in {0, .5,...,6} {
        \foreach \y in {1,3,...,5} {
            \fill[color=black] (\x,\y) circle (0.03);
        }
    }
    
\foreach \x in {.1, .6,...,6.1} {
        \foreach \y in {1,3,...,5} {
            \fill[color=black] (\x,\y) circle (0.03);
        }
    }
%
\foreach \x in {1,3,...,5} {
        \foreach \y in {1,3,...,5} {
            \fill[color=black] (\x,\y) circle (0.05);
        }
    }
    
\foreach \x in {1.1,3.1,...,5.1} {
        \foreach \y in {1,3,...,5} {
            \fill[color=black] (\x,\y) circle (0.05);
        }
    }
    
\foreach \x in {1,3,...,5} {
        \foreach \y in {.9,2.9,...,4.9} {
            \fill[color=black] (\x,\y) circle (0.05);
        }
    }
    
\foreach \x in {1.1,3.1,...,5.1} {
        \foreach \y in {.9,2.9,...,4.9} {
            \fill[color=black] (\x,\y) circle (0.05);
        }
    }
    
\end{tikzpicture}

\caption{Enlarged mesh grid for $d=2$ and $k=2$.}\label{vtv.f.4.2}
  \end{minipage}

\end{figure}

\subsection{The inverse scaling factor when $\tS$ is an enlarged mesh grid}

We will now show that we can find a smaller bound on the inverse scaling factor if we choose $\tS$ to be an enlarged mesh grid rather than an enlarged regular grid.

\begin{lemma}[Inverse scaling factor when $\tS$ is an enlarged mesh grid] \label{vtv.l.4.1}
Let $n_1 \asymp \ldots \asymp n_d$ and  $\tS$ be an enlarged mesh grid. It holds that
$$ {\tilde \gamma}(\tS)= \bigo \left(s^{- \frac{2k-1}{2H(d)}} \right).$$
\end{lemma}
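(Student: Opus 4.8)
The plan is to bound $\tilde\gamma(\tS)=\max_{(j_1,\ldots,j_d)}\norm{{\rm A}_{\tS}\tf^k_{j_1,\ldots,j_d}}_2/\sqrt n$ by exploiting the product structure of the dictionary atoms together with the nested structure of the mesh grid. First I would fix an arbitrary index $(j_1,\ldots,j_d)$ and observe that, because each atom $\tf^k_{j_1,\ldots,j_d}=\tf^k_{j_1}\times\cdots\times\tf^k_{j_d}$ has product structure, the antiprojection onto the span of a product-structured sub-dictionary can be controlled coordinate by coordinate: it suffices to project onto any single one of the coordinate-wise sub-dictionaries appearing in $\tS$, since the span of $\{\tf^k_{j_1,\ldots,j_d}: (j_1,\ldots,j_d)\in\tS\}$ contains the span of each block $\ttimes{i\in[d]}\tilde Z_i(l_i)$ for $(l_1,\ldots,l_d)\in\calS$. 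For a fixed such block, the antiprojection of a product atom factorizes, and by the one-dimensional analysis (Lemma~\ref{vtv.l.2.1} and the one-dimensional antiprojection bounds underlying Lemma~\ref{vtv.l.3.3}) the squared relative antiprojection along coordinate $i$ is $\bigo((\Delta_i(l_i)/n_i)^{2k-1})$, where $\Delta_i(l_i)\asymp n_i/\delta^{d/l_i}$ is the spacing of $Z_i(l_i)$. Taking the product over coordinates and then the minimum over admissible tuples $(l_1,\ldots,l_d)\in\calS$ gives
$$ \norm{{\rm A}_{\tS}\tf^k_{j_1,\ldots,j_d}}^2_2/n = \bigo\left(\min_{(l_1,\ldots,l_d)\in\calS}\ \prod_{i=1}^d\left(\frac{1}{\delta^{d/l_i}}\right)^{2k-1}\right) = \bigo\left(\delta^{-(2k-1)\,d\,\max_{(l_1,\ldots,l_d)\in\calS}\sum_{i=1}^d 1/l_i}\right)^{?}$$
— here one must be careful with the direction of the optimization, so the real work is the combinatorial step below.

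The combinatorial heart of the argument is to evaluate $\min_{(l_1,\ldots,l_d)\in\calS}\prod_{i=1}^d \delta^{-d/l_i}$, equivalently $\max_{(l_1,\ldots,l_d)\in\calS}\sum_{i=1}^d d/l_i$, where $\calS=\{(l_1,\ldots,l_d)\in[d]^d:\abs{\{i:l_i\le z\}}\le z\ \forall z\in[d]\}$. The constraint says the multiset $\{l_1,\ldots,l_d\}$ is ``dominated'' by $\{1,2,\ldots,d\}$, and since $x\mapsto 1/x$ is decreasing, the sum $\sum 1/l_i$ is maximized exactly when $(l_1,\ldots,l_d)$ is a permutation of $(1,2,\ldots,d)$; any other admissible tuple can only have larger (hence worse, for us) $l_i$ values in the sense of majorization. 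Thus the optimal value is $\sum_{i=1}^d 1/i = H(d)$, giving $\min_{(l_1,\ldots,l_d)\in\calS}\prod_{i=1}^d \delta^{-d/l_i}=\delta^{-dH(d)}$. Wait — I need to recheck: we want the antiprojection to be \emph{small}, so we want the block that makes the product of spacings-over-$n_i$ smallest, i.e. we want $\sum d/l_i$ \emph{large}, i.e. the $l_i$ small, and the smallest admissible profile is indeed the permutation of $(1,\ldots,d)$, yielding exponent $dH(d)$. Hence $\tilde\gamma^2(\tS)=\bigo(\delta^{-(2k-1)H(d)})$.

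Finally I would convert back to $s$ using the cardinality count already recorded in the text, $s\asymp\tilde s\asymp\prod_{i=1}^d\delta^{d/i}=\delta^{dH(d)}$, so $\delta\asymp s^{1/(dH(d))}$ and therefore
$$ \tilde\gamma(\tS)=\bigo\left(\delta^{-\frac{(2k-1)H(d)}{2}}\right)=\bigo\left(s^{-\frac{(2k-1)H(d)}{2dH(d)}}\right)=\bigo\left(s^{-\frac{2k-1}{2d}}\right)? $$
This does not match the claimed $s^{-(2k-1)/(2H(d))}$, so the correct bookkeeping must be that the \emph{single} coordinate direction with the finest grid dominates: along the block $\ttimes{i}\tilde Z_i(l_i)$, the relative antiprojection of a product atom is $\bigo(\max_i(\Delta_i(l_i)/n_i)^{2k-1})$ rather than the product (because projecting onto one coordinate-sub-dictionary already annihilates the corresponding factor, and one keeps the \emph{best} coordinate). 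Then minimizing over $(l_1,\ldots,l_d)\in\calS$ the quantity $\max_i \delta^{-d/l_i \cdot(2k-1)}$ and then over blocks gives exponent governed by the coarsest-possible finest spacing, which after the $\calS$-combinatorics yields $\delta^{-d(2k-1)}$ adjusted by the harmonic-number counting — concretely $\tilde\gamma^2=\bigo(\delta^{-d(2k-1)/?})$ reconciling to $s^{-(2k-1)/H(d)}$ via $\delta\asymp s^{1/(dH(d))}$. The main obstacle — and the step I would spend the most care on — is precisely this: correctly combining the product structure of the atoms with the layered structure of $\calS$ to identify which coordinate-block and which coordinate within it controls the maximal antiprojection, and then carrying out the $\calS$-optimization so that the exponent comes out as $\tfrac{2k-1}{2H(d)}$ rather than $\tfrac{2k-1}{2d}$. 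Everything else (the one-dimensional antiprojection estimate, the cardinality count, the final substitution $\delta\asymp s^{1/(dH(d))}$) is routine given the earlier lemmas.
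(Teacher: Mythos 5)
There is a genuine gap here: your argument only ever projects a dictionary atom onto a \emph{single} block $\ttimes{i \in [d]}{\tilde Z}_i(l_i)$ of the mesh grid and then optimizes over the block, and this cannot produce the claimed exponent. Two things go wrong. First, for a tensor-product subspace the relative antiprojection of a product atom does not factor into a product over coordinates: if $\epsilon_i^2$ denotes the relative squared antiprojection of the $i^{\rm th}$ factor, the product atom has relative squared antiprojection $1-\prod_{i=1}^d(1-\epsilon_i^2)\approx \sum_{i=1}^d \epsilon_i^2$, a \emph{sum} -- this is exactly why Lemma \ref{vtv.l.3.3} carries a sum over coordinates. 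So your first display is indeed incorrect, as you suspected. Second, the ``max over coordinates'' repair you sketch at the end still uses one block at a time, and every admissible tuple $(l_1,\ldots,l_d)\in\calS$ has $\max_i l_i=d$ (the constraint at $z=d-1$ forces at least one coordinate onto the coarsest grid $Z_i(d)$, of spacing $\asymp n_i/\delta$). Hence the best any single-block projection can give, uniformly over atoms, is $\norm{{\rm A}_{\tS}\tf^k_{j_1,\ldots,j_d}}_2^2/n=\bigo\bigl(\delta^{-(2k-1)}\bigr)=\bigo\bigl(s^{-\frac{2k-1}{dH(d)}}\bigr)$, i.e.\ ${\tilde \gamma}=\bigo\bigl(s^{-\frac{2k-1}{2dH(d)}}\bigr)$, which for $d\ge 2$ is weaker even than the regular-grid bound $s^{-\frac{2k-1}{2d}}$; no bookkeeping can reconcile it to $s^{-\frac{2k-1}{2H(d)}}$. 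The question mark you leave in your final display is precisely the content of the lemma.

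What is missing is an argument that uses all the blocks of the mesh grid \emph{simultaneously}, and the paper achieves this by decomposing the atom rather than choosing a block. For each coordinate one picks nested grid points $j_i\le z_{i,1}\le\cdots\le z_{i,d}$ with $z_{i,l}\in Z_i(l)$ and $\abs{z_{i,l}-z_{i,l-1}}=\bigo\bigl(n_i/s^{\frac{1}{lH(d)}}\bigr)$, splits the factor $\phi^k_{j_i}=\sum_{l=0}^d u_{i,l}$ into the mutually orthogonal restrictions of the truncated monomial to the index ranges $[z_{i,l}:z_{i,l+1}-1]$, and expands $\phi^k_{j_1,\ldots,j_d}=\sum_{(l_1,\ldots,l_d)\in[0:d]^d}\prod_{i=1}^d u_{i,l_i}$ into mutually orthogonal pieces. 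Via the identity \eqref{vtv.e.3.5} and the monotonicity of the admissibility condition (if $(l_1,\ldots,l_d)\in\calS$ then any componentwise larger tuple is again in $\calS$), every piece whose level profile is admissible lies in ${\rm span}\{\phi^k_t:t\in\tS\}$ -- different pieces are captured by different blocks of the union. The antiprojection is then bounded by the inadmissible pieces alone, whose squared norms legitimately factor over coordinates, $\prod_{i=1}^d\norm{u_{i,l_i}}_2^2/n_i$ with $\norm{u_{i,l}}_2^2/n_i=\bigo\bigl(s^{-\frac{2k-1}{(l+1)H(d)}}\bigr)$, and the dominant inadmissible profile is $(d-1,\ldots,d-1)$, giving ${\tilde \gamma}^2=\bigo\bigl(s^{-\frac{2k-1}{H(d)}}\bigr)$ as claimed. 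Your one-dimensional antiprojection estimates and the cardinality count $\delta\asymp s^{\frac{1}{dH(d)}}$ are fine, but without this decomposition-and-capture step the proof does not go through.
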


\begin{proof}
See Appendix \ref{vtv.pl.4.1}.
\end{proof}


\subsection{Proof of Theorem \ref{vtv.t.2.2}}\label{vtv.pt.2.2}

Theorem \ref{vtv.t.2.2} follows from Theorem \ref{vtv.t.2.4}. Theorem \ref{vtv.t.2.4} is allowed to have correlated errors for the same reasons as Theorem \ref{vtv.t.2.3} is, see the proof of Theorem \ref{vtv.t.3.1} in Subsection \ref{vtv.ss.3.5}.

In Theorem \ref{vtv.t.2.4} we set $x\asymp t \asymp \log n$. We can then choose the free parameters $S$ and $g\in \Rd$ independently of each other. Remember that the normalization of the TV is included in the definition of the analysis operator $D^k$. Therefore it is natural to restrict the choice of $g$ to the class $\{\rf: \1norm{D^k \rf}= \bigo(1)\}$.

We can then choose $S$ to trade off the terms $ {\tilde \gamma} \lambda_0(\log n)\asymp {\tilde \gamma} \log^{1/2}(n)/ n^{1/2}$ and $s/n$. Typically, if we require $S$ to have a regular structure, we obtain ${\tilde \gamma}=\bigo (s^{-h})$ for some $h=h(d,k) \in \R$. The tradeoff is achieved by choosing
$$ s\asymp n^{\frac{1}{2(1+h)}} \log^{\frac{1}{2(1+h)}}n$$
and gives the slow rate
$$ \frac{s}{n} \asymp n^{-1+\frac{1}{2(1+h)}} \log^{\frac{1}{2(1+h)}}n.$$

We choose the active set to be an enlarged mesh grid $\tS$. Then, by Lemma \ref{vtv.l.4.1}, we can choose $ {\tilde \gamma}= \bigo\left(s^{- \frac{2k-1}{2H(d)}} \right)$ and the claim follows.
\hfill $\square$

\begin{remark}[Mesh grids vs. regular grids]
If we choose a regular grid as active set, according to Lemma \ref{vtv.l.3.4} we obtain $ \tilde{\gamma} \asymp s^{-\frac{2k-1}{2d}}$ and a slow rate
$$ n^{- \frac{d+2k-1}{2d+2k-1}} \log^{\frac{d}{2d+2k-1}}(n),$$
which is slower than the rate obtained with an active set defining a mesh grid.
Indeed, for all $d \ge 1$ it holds that $H(d) \le d$.

In both cases, the slow rate for fixed $k$ goes to $n^{-1/2} \log^{1/2}(n)$ as $d \to \infty$. If $d$ is fixed, the slow rates goes to $n^{-1}$ as $k \to \infty$.
\end{remark}

\begin{remark}[Allow $\lambda$ to depend on ${\rm TV}_k$]
In the proof of Theorem \ref{vtv.t.2.2} we can also drop the restriction $g\in \{\rf: \1norm{D^k \rf}= \bigo(1)\}$ and  trade off ${\tilde \gamma} \log^{1/2}(n) {\rm TV}_k(g)/ n^{1/2}$ and $s/n$.
The tradeoff results in the choice
$$\lambda \asymp n^{- \frac{H(d)+2k-1}{2H(d)+2k-1}}\ \log^{\frac{H(d)}{2H(d)+2k-1}}(n)\ {\rm TV}_k(g)^{-\frac{2k-1}{2H(d)+2k-1}} $$
and gives the rate
$$ n^{- \frac{H(d)+2k-1}{2H(d)+2k-1}}\ \log^{\frac{H(d)}{2H(d)+2k-1}}(n)\  {\rm TV}_k(g)^{\frac{2H(d)}{2H(d)+2k-1}} .$$
\end{remark}
\section{Denoising lower-dimensional margins}\label{vtv.s.5}

In the previous sections we have shown how to estimate $f^0_{\N_k^{\perp}}$ by trend filtering and have established fast adaptive $\ell^0$-rates and not-so-slow $\ell^1$-rates. There is still an open question: how to estimate $f^0_{\N_k}$?

If $n_1 \asymp \ldots \asymp n_d$, the dimension of $\N_k$ is of order $n^{1-1/d}$. Estimating $f^0_{\N_k}$ by least squares would result in a rate of order $n^{-1/d}$ and therefore be limiting for $d \ge 2$.

The approach we take is to decompose $\N_k$ into lower dimensional mutually orthogonal linear spaces, the so-called marginal linear spaces, to which we can apply a lower dimensional version of trend filtering.

Let $\P[d]$ denote the power set of $[d]:= \{1, \ldots, d\}$. We consider sets of coordinate indices $M\subseteq [d]$.

The intuition behind the decomposition into margins is to partition the set of tensor indices into $2^d$ subsets as 
$$ [1:k] \cup [k+1:n_1] \times \ldots \times [1:k] \cup [k+1:n_1].$$
For $M \in \P[d]$ define the set of indices
$$ I_M^k= \ttimes{i \in M} [k+1:n_i] \ttimes{i \not \in M} [1:k].$$
We moreover define the linear spaces 
$$\M(M)= {\rm span} \{\tf^k_{k_1, \ldots, k_d}, (k_1, \ldots, k_d) \in I^k_M  \}, \ M \in \P[d].$$

Note that in one dimension, $\{\tf^k_j\}_{j \in [k]}$ and $\{\tf^k_j\}_{j \in [k+1:n]}$ are orthogonal to each other. Moreover,  $M\triangle M'\not= \emptyset$, for $ M\not=M' \in \P[d]$. 
Because of the product structure of the dictionary atoms spanning $\M$ this means that any $\M(M)$ and $\M(M')$ are mutually orthogonal, for $M\not=M'$.

The mutually orthogonal marginal linear subspaces $\{\M(M)\}_{M \in \P[d]}$ partition $\Rd$. The dimension of $\M(M)$ is given by $k^{d-\abs{M}} \prod_{i \in M} (n_i-k)$.
By the multi-binomial theorem it holds that
$$ \prod_{i=1}^d n_i = \sum_{M \in \P[d]} k^{d-\abs{M}} \prod_{i \in M} (n_i-k)$$
for $k \in [0: \min_{l \in [d]}n_l-1]$. This means that 
$$ \sum_{M \in \P[d]} {\rm dim}(\M(M))= n$$
and because $\{\M(M)\}_{M \in \P[d]}$ are mutually orhtogonal it follows that they also partition $\Rd$.

We can further partition any $\M(M)$ into $k^{d- \abs{M}}$ mutually orthogonal subspaces $\M(M,h), h \in [1:k]^{d-\abs{M}}$.

The partition results by defining the set of indices
$$ I^k_{M,h}:= \ttimes{i \in M} [k+1:n_i] \ttimes{i \not\in M} \{h_i\}$$
and the linear subspaces
$$ \M(M,h):= {\rm span} \{\tf^k_{k_1, \ldots, k_d}, (k_1, \ldots, k_d) \in I^k_{M,h}  \}.$$
Again, $\{\M(M,h)\}_{ h \in [k]^{d-\abs{M}}, M\in \P[d]} $ are mutually orthogonal and partition $\Rd$.

\begin{definition}[ANOVA decomposition]\label{vtv.d.5.1}
The decomposition of a tensor $f$ as

$$f= \sum_{M \in \P[d]} \sum_{h \in [1:k]^{d-\abs{M}}} f_{\M(M,h)}$$
is called ANOVA decomposition.
\end{definition}

By orthogonality we have that
$$ \norm{f}^2_2= \sum_{M \in \P[d]} \sum_{h \in [1:k]^{d-\abs{M}}} \norm{f_{\M(M,h)}}^2_2.$$

\subsection{Margins as lower dimensional objects}

Our aim is to apply a lower dimensional version of trend filtering to estimate $f^0_{\M(M,h)}$, for $M \not= \emptyset$. For $M = \emptyset$ it holds that $\abs{I^k_{M= \emptyset}}= k^d= \bigo(1)$. We will therefore estimate  $f^0_{\M(\emptyset,h)}$ by the least squares estimate $Y_{\M(\emptyset,h)}$  at the parametric rate $n^{-1}$.

To apply a lower dimensional version of trend filtering to estimate $f^0_{\M(M,h)}$ we first need to reinterpret $f_{\M(M,h)}$ as a $\abs{M}$-dimensional tensor. We then need to justify why we can apply Theorems \ref{vtv.t.2.3} and \ref{vtv.t.2.4} which require iid errors and are at the base of the adaptive rates by Theorem \ref{vtv.t.3.1} and the not-so-slow rates by Theorem \ref{vtv.t.2.2}.

By writing
$$ f_{\M(M,h)}= \barf_{\M(M,h)} \times  \ttimes{i \not\in M} \tf^k_{h_i},\ \barf_{\M(M,h)}\in  \R^{\ttimes{i \in M} n_i}$$
we can interpret $f_{\M(M,h)}$ as a $M$-dimensional object.

Similarly, we can write 
$$ Y_{\M(M,h)}= \bary_{\M(M,h)}\times  \ttimes{i \not\in M} \tf^k_{h_i},\  \bary_{\M(M,h)}\in  \R^{\ttimes{i \in M} n_i}.$$

Let $n_M:= \prod_{i \in M} n_i$. Because of the (partial) product structure of $f_{\M(M,h)}$ and since $\norm{\tf^k_{h_i}}^2_2= n_i, h_i \in [k]$ (cf. Definition \ref{vtv.d.2.5}), it holds that 

$$\norm{f_{\M(M,h)}}^2_2/n= \norm{\barf_{\M(M,h)}}^2_2/n_M.$$

Thanks to the above equation and to the ANOVA decomposition we can add up the rates of estimation of the margins to estimate the whole tensor.

\subsection{The estimator for the lower-dimensional margins}

For $M \in \P[d]\setminus \emptyset$ define
$$  D_M^k:= n_M^{k-1} \prod_{i \in M} D^k_i.$$

To estimate the whole tensor, we consider the estimator
$$ \hf= \sum_{M \in \P[d]} \sum_{h \in [k]^{d-\abs{M}}} \hf_{\M(M,h)},$$
where 
$$ \hf_{\M(M,h)}=\hat{ \barf}_{\M(M,h)} \ttimes{i \not\in M} \tf^k_{h_i}, \hat{ \barf}_{\M(M,h)}\in \R^{\ttimes{i \in M} n_i}.$$
We define
$$ \hf_{\M(\emptyset,h)}:=  \bary_{\emptyset,h} (\ttimes{i \in [d]}  \tf^k_{h_i}), \forall h \in [k]^d$$
and
$$ \hat{\barf}_{\M(M,h)}:=$$
$$:=\argmin_{\bar{\rf}_{\M(M,h)} \in  \R^{\ttimes{i \in M} n_i}} \left\{\norm{\bary_{\M(M,h)}-\bar{\rm f}_{\M(M,h)}}^2_2/n_M + 2 \lambda_{M,h} \1norm{D^k_M \bar{\rm f}_{\M(M,h)} } \right\},$$
where $\{ \lambda_{M,h}>0, h \in [k]^{d-\abs{M}}, M \in \P[d] \setminus \emptyset \}$ are positive tuning parameters.

We call $\1norm{D^k_M  \barf_{\M(M,h)}}$ the $k^{\rm th}$-order $\abs{M}$-dimensional Vitali total variation and $ \hat{\barf}_{\M(M,h)}$ the $\abs{M}$-dimensional trend filtering estimator.

\begin{remark}[We can apply Theorems \ref{vtv.t.2.3} and \ref{vtv.t.2.4}]
For $\barf \in \R^{\ttimes{i \in M} n_i} $ it holds that
\begin{eqnarray*}
 \bar{\epsilon}_{\M(M,h)} \odot \barf &=& \bare_{\M(M,h)} \odot \barf_{\M(M,h)}\\
 &=& \left(\sum_{{M' \subseteq M, h'_M=h}} (\bare_{\M(M',h')}\ttimes{i \in M\setminus M'} \tf^k_{h_i}) \right) \odot \barf_{\M(M,h)}.
\end{eqnarray*}
The $n_M$ entries of the tensor $\sum_{{M' \subseteq M, h'_M=h}} (\bare_{\M(M',h')}\ttimes{i \in M\setminus M'} \tf^k_{h_i})$ are the coefficients of the projection of $\epsilon$ onto the linear space $\ttimes{i \not\in M} \tf^k_{h_i} \ttimes{i \in M} \R^{n_i}$ and as such have i.i.d. $\N(0,\sigma^2n_M/n)$-distributed entries. We can therefore apply Theorems \ref{vtv.t.2.3} and \ref{vtv.t.2.4} with noise variance $\sigma^2 n_M/n$.
\end{remark}

\begin{remark}[Synthesis form for the estimator of lower dimensional margins]
The synthesis form of the estimator for the margins can be obtained in a similar way as for the $d$-dimensional margin (cf. Subsection \ref{vtv.ss.2.3}).
\end{remark}
\section{Denoising the whole tensor}\label{vtv.s.6}

We now put together the results from Sections \ref{vtv.s.3} and \ref{vtv.s.4} with the ANOVA decomposition given in Section \ref{vtv.s.5} to show adaptivity and not-so-slow rates for the estimation of the whole tensor.

\subsection{Adaptivity of trend filtering}

We fix $k \in \{1,2,3,4\}$.
By $S_{M,h}$ we denote a subset of $I^k_{M,h}$ satisfying the conditions for a hyperrectangular tessellation suitable for derivative matching. By $d_m^z(S_{M,h})$ we denote an analogon of the quantity $d^z_m$ appearing in Theorem \ref{vtv.t.3.1}, but defined on a hyperrectangular tessellation of $I^k_{M,h}$ generated by the enlarged version $\tS_{M,h}$ of the active set $S_{M,h}$.

The following theorem for denoising the whole tensor $Y$ by means of trend filtering holds.

\begin{theorem}[Adaptivity of tensor denoising with trend filtering]\label{vtv.t.6.1}
Choose $x,t>0$. Let $g \in \Rd$ be arbitrary.
For $M \not=\emptyset$ and a large enough constant $C>0$ only depending on $k$, choose
$$\lambda_{M,h} \ge C \ \abs{M} \ \sqrt{\sum_{i\in M} \left(\frac{d_{i,\max}(S_{M,h})}{n_i} \right)^{2k-1}} \lambda_0(t+d\log (k+1)) .$$
Then, with probability at least $1-e^{-x}-e^{-t}$, it holds that
\begin{eqnarray*}
&&\norm{\hf-f^0}^2_2/n \le \norm{g-f^0}^2_2/n + 4 \sum_{ \substack{M \in \P[d]\setminus \emptyset\\ h \in [k]^{d-\abs{M}}}} \lambda_{M,h} \norm{(D^k_M g)_{-S_{M,h}}}_1\\
&+& \frac{2\sigma^2}{n} \left(\sqrt{x+ d \log (k+1)}+ \sqrt{k^d} \right)^2\\
&+& \sum_{ \substack{M \in \P[d]\setminus \emptyset\\ h \in [k]^{d-\abs{M}}}} \frac{2\sigma^2}{n} \left(\sqrt{x+d\log (k+1)}+ \sqrt{ks_{M,h}} \right)^2\\
&+& \bigo\left( \sum_{ \substack{M \in \P[d]\setminus \emptyset\\ h \in [k]^{d-\abs{M}}}} \lambda_{M,h}^2 \ \left(\sum_{i\in M} \log(e d_{i,\max})\right) \sum_{m=1}^{s_{M,h}} \sum_{z \in \{-,+\}^{\abs{M}}} \left(\frac{n_M}{d_m^z} \right)^{2k-1} \right).
\end{eqnarray*}
In particular the constraint on $C$ is
$$ C \ge \frac{k^{\frac{2k-1}{2}}}{a_0} \text{ with } a_0= \begin{cases} 1, & k=1, \\ 8\sqrt{2}/7\approx 1.62, & k=2, \\144\sqrt{3}/76\approx 3.28 , & k=3, \\ 10.10, & k=4. \end{cases}$$
as
$$\min_{i \in M} \min_{m \in [s_{M,h}]}   \min_{h \in [k]^{d-\abs{M}}} \min \{d_{i,m}^-(S_{M,h}), d_{i,m}^+(S_{M,h}) \} \to \infty.$$
\end{theorem}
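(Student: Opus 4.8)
The plan is to reduce Theorem~\ref{vtv.t.6.1} to a union over the pieces of the ANOVA decomposition of Section~\ref{vtv.s.5}, applying Theorem~\ref{vtv.t.3.1} to each positive-dimensional margin and a crude $\chi^2$ bound to the zero-dimensional ones. First I would use the mutual orthogonality of $\{\M(M,h)\}_{M\in\P[d],\,h\in[k]^{d-\abs{M}}}$ to split both sides: Pythagoras gives $\norm{\hf-f^0}^2_2/n=\sum_{M,h}\norm{(\hf-f^0)_{\M(M,h)}}^2_2/n$ and $\norm{g-f^0}^2_2/n=\sum_{M,h}\norm{(g-f^0)_{\M(M,h)}}^2_2/n$. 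Passing to the lower-dimensional picture through the identity $\norm{f_{\M(M,h)}}^2_2/n=\norm{\barf_{\M(M,h)}}^2_2/n_M$ from Section~\ref{vtv.s.5}, it then suffices to control $\norm{(\hat{\barf}-\bar f^0)_{\M(M,h)}}^2_2/n_M$ for each $(M,h)$ separately and to add the bounds.

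For $M=\emptyset$ the subspace $\M(\emptyset,h)$ is one-dimensional, spanned by $\ttimes{i\in[d]}\tf^k_{h_i}$, which has squared Frobenius norm $n$; hence the least-squares error $\bary_{\emptyset,h}-\bar f^0_{\emptyset,h}$ is $\N(0,\sigma^2/n)$ and $\sum_{h\in[k]^d}\norm{(\hf-f^0)_{\M(\emptyset,h)}}^2_2/n$ equals $\sigma^2/n$ times a $\chi^2_{k^d}$ variable. The tail bound $\chi^2_{k^d}\le(\sqrt{k^d}+\sqrt{2u})^2$ with probability at least $1-e^{-u}$, taken at $u=x+d\log(k+1)$, yields the term $\tfrac{2\sigma^2}{n}(\sqrt{x+d\log(k+1)}+\sqrt{k^d})^2$.

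For $M\neq\emptyset$ I would apply Theorem~\ref{vtv.t.3.1} — legitimately in its correlated-error form (Theorem~\ref{vtv.t.2.3}), as justified by the Remark in Section~\ref{vtv.s.5} — to the $\abs{M}$-dimensional tensor $\bar f^0_{\M(M,h)}$ with sizes $\{n_i\}_{i\in M}$, difference operator $D^k_M$, active set $S_{M,h}$ and the associated quantities $d_{i,\max}(S_{M,h})$, $d_m^z(S_{M,h})$. Two checks are needed: (a) the coefficient tensor of $\epsilon$ along $\ttimes{i\notin M}\tf^k_{h_i}$ has i.i.d.\ $\N(0,\sigma^2 n_M/n)$ entries, so the effective noise level is $\sigma^2 n_M/n$, and at that level the universal choice equals $\sigma\sqrt{(2\log(2n_M)+2u)/n}\le\lambda_0(u)$, so the stated choice of $\lambda_{M,h}$ with $u=t+d\log(k+1)$ meets the hypothesis of Theorem~\ref{vtv.t.3.1}; (b) there are $\sum_{M\in\P[d]\setminus\emptyset}k^{d-\abs{M}}=(k+1)^d-k^d$ pairs $(M,h)$, so running each application at level $x'=x+d\log(k+1)$, $t'=t+d\log(k+1)$ and intersecting with the $\chi^2$ event keeps the total failure probability below $((k+1)^d-k^d+1)e^{-x'}+((k+1)^d-k^d)e^{-t'}\le e^{-x}+e^{-t}$, which is precisely why the offset $d\log(k+1)$ appears throughout the statement.

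On the intersection of these good events I would sum the oracle inequality of Theorem~\ref{vtv.t.3.1} over all $(M,h)$ with $M\neq\emptyset$ and add the $M=\emptyset$ bound. The approximation terms reassemble into $\norm{g-f^0}^2_2/n$, the linear-in-$\lambda$ terms into $4\sum_{M\in\P[d]\setminus\emptyset,\,h}\lambda_{M,h}\norm{(D^k_M g)_{-S_{M,h}}}_1$, the remaining noise terms into $\sum_{M\in\P[d]\setminus\emptyset,\,h}\tfrac{2\sigma^2}{n}(\sqrt{x+d\log(k+1)}+\sqrt{ks_{M,h}})^2$, and the effective-sparsity remainders into the displayed $\bigo$ sum; the constraint on $C$, with the listed values of $a_0$ for $k\in\{1,2,3,4\}$, is inherited from Theorem~\ref{vtv.t.3.1}, now required uniformly over $M$ and $h$. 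I expect the main obstacle to be exactly this bookkeeping: tracking that each $\abs{M}$-dimensional subproblem carries noise variance $\sigma^2 n_M/n$ rather than $\sigma^2$, verifying that the rescaled universal choice is still dominated by $\lambda_0(t+d\log(k+1))$, and confirming the count of subproblems so that the probability inflation matches the offsets in the statement. The geometric ingredients — hyperrectangular tessellations, interpolating tensors, monotonicity of $\omega$ and $\rw$ — are all supplied by Theorem~\ref{vtv.t.3.1} and require no new work.
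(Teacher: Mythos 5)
Your proposal is correct and follows essentially the same route as the paper, whose proof is exactly this argument in condensed form: apply the ANOVA decomposition, invoke Theorem \ref{vtv.t.3.1} on each of the $(k+1)^d$ margins with $x+d\log(k+1)$ and $t+d\log(k+1)$ in place of $x$ and $t$, and union-bound. The extra details you supply (Pythagoras splitting, the $\chi^2_{k^d}$ bound for the $M=\emptyset$ margins, the noise variance $\sigma^2 n_M/n$ and the domination of the rescaled universal choice by $\lambda_0$) are exactly what the paper leaves implicit via the Remark in Section \ref{vtv.s.5}.
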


\begin{proof}
The result follows by the ANOVA decomposition. In total there are $(k+1)^d$ margins. As a consequence of the union bound, the result for the estimation of the whole tensor is attained with probability at least $1-e^{-t}-e^{-x}$ if in the application of Theorem \ref{vtv.t.3.1} one chooses $x+d\log(k+1)$ and $t+d\log(k+1)$ instead of $x$ and $t$ for some $x,t>0$.
\end{proof}

If $S_{M,h}$ are chosen to be regular grids and the tuning parameters are choosen as
$$ \lambda_{M,h} \asymp  \sqrt{\frac{\log n}{s^{\frac{2k-1}{\abs{M}}}n}}$$
 then the rate of Theorem \ref{vtv.t.6.1} is
 $$ \bigo\left( \frac{\log n}{n} \sum_{ \substack{M \in \P[d]\setminus \emptyset\\ h \in [k]^{d-\abs{M}}}} s_{M,h}^{\frac{2k(\abs{M}-1)+1}{\abs{M}}} \log(n_M/s_{M,h}) \right).$$
 
\subsection{Not-so-slow rates for trend filtering}
 
 We now present a not-so-slow rate of estimation for the whole tensor $f^0$ by trend filtering. We restrict again to tensors with $n_1 \asymp \ldots \asymp n_d \asymp n^{1/d}$.
 
 We let $c_{M,h}>0$ be constants of order $\bigo(1)$. The following theorem holds.
 
\begin{theorem}[Not-so-slow $\ell^1$-rate for trend filtering]\label{vtv.t.6.2}
Choose
$$\lambda \asymp n^{- \frac{H(d)+2k-1}{2H(d)+2k-1}} \log^{\frac{H(d)}{2H(d)+2k-1}}(n).$$
Then, with probability at least $1-\Theta(1/n)$, it holds that
\begin{eqnarray*}
\norm{\hf-f^0}^2_2/n &\le& \sum_{ \substack{M \in \P[d]\setminus \emptyset\\ h \in [k]^{d-\abs{M}}}} \min_{ \substack{\bar{\rm f}_{\M(M,h)}:\\ \1norm{D_M^k\bar{\rm f}_{\M(M,h)}}\le c_{M,h}}} \norm{\barf^0_{\M(M,h)}- \bar{\rm f}_{\M(M,h)}}^2_2/n_M\\
&& + \bigo\left( n^{- \frac{H(d)+2k-1}{2H(d)+2k-1}} \log^{\frac{H(d)}{2H(d)+2k-1}}n \right).
\end{eqnarray*}
\end{theorem}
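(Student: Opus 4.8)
The plan is to reduce the statement, via the ANOVA decomposition of Section \ref{vtv.s.5}, to the single-margin not-so-slow rate (Theorem \ref{vtv.t.2.2}, resp.\ its general form Theorem \ref{vtv.t.2.4}) applied separately to each marginal estimator $\hf_{\M(M,h)}$, and then to recombine the resulting oracle inequalities by orthogonality. First I would use Definition \ref{vtv.d.5.1}, the mutual orthogonality of the spaces $\{\M(M,h)\}$, and the identity $\norm{f_{\M(M,h)}}_2^2/n=\norm{\barf_{\M(M,h)}}_2^2/n_M$ of Section \ref{vtv.s.5} to write
$$\norm{\hf-f^0}_2^2/n\;=\;\sum_{M\in\P[d]}\ \sum_{h\in[k]^{d-\abs{M}}}\norm{\hat{\barf}_{\M(M,h)}-\barf^0_{\M(M,h)}}_2^2/n_M ,$$
so that it suffices to bound each summand on an event of probability $1-\Theta(1/n)$. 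Throughout we keep the hypothesis $n_1\asymp\ldots\asymp n_d\asymp n^{1/d}$ of the theorem, so $n_M\asymp n^{\abs{M}/d}$ and $\log n_M\asymp\log n$.

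For $M=\emptyset$ the space $\M(\emptyset,h)$ is one-dimensional and $\hf_{\M(\emptyset,h)}=\bary_{\emptyset,h}$ is a least-squares plug-in, whose error equals $\abs{\bare_{\emptyset,h}}^2$ with $\bare_{\emptyset,h}\sim\N(0,\sigma^2/n)$ by the Remark on the applicability of Theorems \ref{vtv.t.2.3} and \ref{vtv.t.2.4}; a Gaussian tail bound gives $\abs{\bare_{\emptyset,h}}^2=\bigo(\sigma^2\log n/n)$ on the relevant event, and the $k^d=\bigo(1)$ such terms sum to $\bigo(\log n/n)$, which is of strictly smaller order than the claimed rate. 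For $M\neq\emptyset$ I would reinterpret $\hat{\barf}_{\M(M,h)}$ as an $\abs{M}$-dimensional trend filtering estimator with i.i.d.\ $\N(0,\sigma^2 n_M/n)$ errors (again by that Remark) and apply Theorem \ref{vtv.t.2.2} with $d$ replaced by $\abs{M}$ and $n$ by $n_M$, taking the active set to be an enlarged mesh grid and the tuning parameter $\lambda_{M,h}$ of the order prescribed there (so that the largest such tuning parameter, the one for $M=[d]$, is the $\lambda$ displayed in the statement). Because $n_M\asymp n^{\abs{M}/d}$ and $\log n_M\asymp\log n$, the rescaled noise level $\sigma\sqrt{n_M/n}$ cancels the $\sqrt{n_M}$ hidden in $\lambda_0$ and in the $\sigma^2 s/n_M$ remainder term, so the tradeoff between ${\tilde\gamma}\lambda_0\asymp s^{-(2k-1)/(2H(\abs{M}))}\sqrt{\log n/n}$ (using Lemma \ref{vtv.l.4.1}) and $s/n$ yields, for every $\bar{\rm f}_{\M(M,h)}$ with $\1norm{D^k_M\bar{\rm f}_{\M(M,h)}}\le c_{M,h}=\bigo(1)$, with probability $1-\Theta(1/n)$ (choosing the deviation parameters $x\asymp t\asymp\log n$),
$$\norm{\hat{\barf}_{\M(M,h)}-\barf^0_{\M(M,h)}}_2^2/n_M\;\le\;\norm{\bar{\rm f}_{\M(M,h)}-\barf^0_{\M(M,h)}}_2^2/n_M+\bigo\!\left(n^{-\frac{H(\abs{M})+2k-1}{2H(\abs{M})+2k-1}}\log^{\frac{H(\abs{M})}{2H(\abs{M})+2k-1}}n\right).$$

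It then remains to minimize over $\bar{\rm f}_{\M(M,h)}$, to take a union bound over the $(k+1)^d=\bigo(1)$ margins, and to observe that $x\mapsto(x+2k-1)/(2x+2k-1)$ is decreasing, so that $\frac{H(l)+2k-1}{2H(l)+2k-1}\ge\frac{H(d)+2k-1}{2H(d)+2k-1}$ for every $l=\abs{M}\le d$; hence every remainder with $M\neq[d]$ is of strictly smaller order than $n^{-\frac{H(d)+2k-1}{2H(d)+2k-1}}$, and the finitely many remainders (including the $M=\emptyset$ contribution) add up to $\bigo\!\left(n^{-\frac{H(d)+2k-1}{2H(d)+2k-1}}\log^{\frac{H(d)}{2H(d)+2k-1}}n\right)$, which together with the surviving approximation terms $\sum_{M\in\P[d]\setminus\emptyset}\sum_{h\in[k]^{d-\abs{M}}}\min_{\bar{\rm f}_{\M(M,h)}}\norm{\barf^0_{\M(M,h)}-\bar{\rm f}_{\M(M,h)}}_2^2/n_M$ is exactly the asserted bound. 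The main obstacle is the second step: one must check carefully that Theorem \ref{vtv.t.2.2} is genuinely applicable to $\hat{\barf}_{\M(M,h)}$ with the rescaled variance $\sigma^2 n_M/n$ — which is the role of the Remark in Section \ref{vtv.s.5} — and that after this rescaling the rate comes out as a power of $n$ and not of $n_M$; once this is settled, the comparison of exponents identifying the $d$-dimensional margin as the bottleneck is the only genuinely new computation.
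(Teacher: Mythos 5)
Your proposal is correct and follows essentially the same route as the paper: decompose the error by the ANOVA/orthogonality identity, apply the slow-rate oracle inequality (Theorem \ref{vtv.t.2.4}, i.e.\ the engine behind Theorem \ref{vtv.t.2.2}) to each margin with the rescaled noise variance $\sigma^2 n_M/n$ and an enlarged mesh grid via Lemma \ref{vtv.l.4.1}, take a union bound over the $\bigo(1)$ margins with $x\asymp t\asymp\log n$, and observe that the $|M|=d$ margin gives the limiting rate. Your explicit tradeoff computation showing the rate comes out in powers of $n$ rather than $n_M$, and your separate treatment of $M=\emptyset$, just spell out steps the paper leaves implicit.
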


\begin{proof}
We apply Theorem \ref{vtv.t.2.4} to $\hat{\barf}_{\M(M,h)}$ with $x+d\log(k+1)$ and $t+d\log(k+1)$ as  in the proof of Theorem \ref{vtv.t.6.1}. We choose $x\asymp t \asymp \log n$.

Let $\tS_{M,h}$ be an enlarged mesh grid.  We have to trade off with respect to $\tilde{s}_{M,h}\asymp s_{M,h}$ the terms 
$$ \frac{n_M \sigma^2}{n} \frac{s_{M,h}}{n_M} \asymp \underbrace{\frac{1}{s^{\frac{2k-1}{H(\abs{M})}}}}_{\asymp \tilde \gamma} \underbrace{\sigma \sqrt{\frac{n_M }{n} } \sqrt{\frac{\log n}{n_M}}}_{\asymp \lambda_0(\log n)}.$$
We therefore obtain the rate
$$ \bigo\left( n^{- \frac{H(\abs{M})+2k-1}{2H({\abs{M}})+2k-1}} \log^{\frac{H(\abs{M})}{2H(\abs{M})+2k-1}}n \right).$$
Since $\frac{H(\abs{M})}{2H(\abs{M})+2k-1}$ is decreasing in $\abs{M}$, the rate of estimation of the $d$-dimensional margin is limiting and we obain the claim.
\end{proof}
\section{Conclusion}\label{TV1S8}

We have shown that imposing structure to denoise $d$-dimensional tensors leads to an adaptive reconstruction. The structure is imposed via penalties on the $l$-dimensional $k^{\rm th}$-order Vitali TV of the $l$-dimensional margins of the tensor, for $l \in [d]$. If the tensor is a product of polynomials on a constant number of  hyperrectangles of any dimension $l \le d$, then the MSE is bounded as $ \norm{\hf - f^0}^2_2/n= \bigo(\log^2 n/n)$, with high probability. The true tensor $f^0$ can therefore be reconstructed at an almost parametric rate. The key aspects of our results are: the reformulation of the analysis estimator in synthesis form, the interpolating tensor to bound the effective sparsity and the ANOVA decomposition of a $d$-dimensional tensor. In the background of all our results there are the projection argmuents by \citet{dala17} to bound the random part of the problem, which are fundamental to prove the adaptativity of $\hf$ to the underlying unobserved $f^0$.

Note that we prove reconstruction for trend filtering of order $k= \{1,2,3,4\}$.  We are not able to prove that the approach we use to find an interpolating tensor for $k \in \{1,2,3,4\}$ gives a suitable interpolating tensor for general $k$.
Thus, although for each given finite $k$ we can check by computer whether our construction gives an interpolating vector, the problem remains open for general $k$.


\section*{Acknowledgements}
We would like to acknowledge support for this project
from the the Swiss National Science Foundation (SNF grant 200020\_169011).


%
%




\vskip 0.2in

\bibliography{library,/Users/fortelli/PhD/newlibrary/library}

\newpage
\appendix
\section{Oracle inequalities with fast and slow rates}
In this section we report an oracle inequality with fast rates and one with slow rates. These oracle inequalities correspond to the adaptive and to the non-adaptive bound of Theorem 2.2 in \cite{vand19}, see also  Theorems 2.1 and 2.2 in \citet{orte19-2} and Theorems 16 and 17 in \citet{orte19-4} adapted to have an enlarged active set.

\begin{theorem}[Oracle inequality with fast rates]\label{vtv.t.2.3}
Let $g \in \R^{n_1 \times \ldots \times n_d}$ and $S \subseteq \ttimes{i \in [d]} [k+2:n_i-k]$ be arbitrary.
For $x,t>0$, choose  $\lambda \ge {\tilde \gamma} \lambda_0(t)$.
Then, with probability at least $1-e^{-x}-e^{-t}$, it holds that 
\begin{eqnarray*}
\norm{(\hat{ f}-{ f^0})_{\N^{\perp}_k}}^2_2 /n &\le&  \norm{g-{ f^0}_{\N^{\perp}_k}}^2_2 /n + 4 \lambda \norm{(D^k g)_{-S}}_1\\
&&+ \left( \sigma\sqrt{\frac{2x}{n}} +\sigma \sqrt{\frac{k s}{n}} + \lambda \Gamma_{D^k}(S, v_{-S},q_S) \right)^2,
\end{eqnarray*}
where $q_S= \text{sign}((D^k g)_{S})$.
\end{theorem}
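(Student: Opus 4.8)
The plan is to transcribe the proof of the adaptive part of Theorem~2.2 in \citet{vand19} into the tensor language of Section~\ref{vtv.ss.2.3}: the one-dimensional truncated-power dictionary is replaced by the product dictionary $\tF^k$ of Definition~\ref{vtv.d.2.6}, and the one-dimensional enlarged support by $\tS$. Throughout, write $\langle a,b\rangle=\sum_{1,\ldots,1}^{n_1,\ldots,n_d}(a\odot b)_{j_1,\ldots,j_d}$ for the tensor inner product. The starting point is the \emph{basic inequality}: since $\hf:=\hf_{\N^{\perp}_k}$ minimises the criterion of Definition~\ref{vtv.d.2.3}, comparing its value with the value at $g$, inserting $Y=f^0+\epsilon$, expanding the squared Frobenius norms and cancelling $\norm{\epsilon_{\N^{\perp}_k}}_2^2$ gives, with $h:=\hf-g_{\N^{\perp}_k}\in\N^{\perp}_k$,
$$\norm{(\hf-f^0)_{\N^{\perp}_k}}_2^2/n \le \norm{g-f^0_{\N^{\perp}_k}}_2^2/n + \tfrac2n\langle\epsilon_{\N^{\perp}_k},h\rangle + 2\lambda\bigl(\TV_k(g)-\TV_k(\hf)\bigr),$$
where I use $\norm{(g-f^0)_{\N^{\perp}_k}}_2\le\norm{g-f^0_{\N^{\perp}_k}}_2$ and $\hf\in\N^{\perp}_k$. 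Since $h\in\N^{\perp}_k$, idempotence of ${\rm P}_{\N^{\perp}_k}$ lets me replace $\epsilon_{\N^{\perp}_k}$ by $\epsilon$ in the stochastic term, exactly as in the proof of Theorem~\ref{vtv.t.3.1} in Subsection~\ref{vtv.ss.3.5}, so it remains to control $\langle\epsilon,h\rangle$ for $h\in\N^{\perp}_k$.

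The heart of the argument is the projection device of \citet{dala17}. Expanding $h=\sum\beta_{j_1,\ldots,j_d}\tf^k_{j_1,\ldots,j_d}$ in the dictionary, so that $\beta=D^kh=D^k\hf-D^kg$, I split $h={\rm P}_{\tS}h+{\rm A}_{\tS}h$. For the \emph{projected} part, Cauchy--Schwarz together with a $\chi^2$-type concentration bound for $\norm{{\rm P}_{\tS}\epsilon}_2$ (a Gaussian vector in a space whose dimension is controlled by $\abs{\tS}$) give, on an event of probability at least $1-e^{-x}$, a contribution at most $2\bigl(\sigma\sqrt{2x/n}+\sigma\sqrt{ks/n}\bigr)\norm{h}_2/\sqrt n$. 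For the \emph{antiprojection} part, ${\rm A}_{\tS}\tf^k_{j_1,\ldots,j_d}=0$ on $\tS$, while for each remaining atom $\langle\epsilon,{\rm A}_{\tS}\tf^k_{j_1,\ldots,j_d}\rangle$ is centred Gaussian with standard deviation $\sigma\norm{{\rm A}_{\tS}\tf^k_{j_1,\ldots,j_d}}_2\le\sigma\sqrt n\,\tv_{j_1,\ldots,j_d}$ by Lemma~\ref{vtv.l.3.3} and Definition~\ref{vtv.d.2.7}; a union bound over the at most $n$ atoms, absorbed by the $\log(2n)$ built into $\lambda_0(t)$, shows that on an event of probability at least $1-e^{-t}$ this part is at most $2\lambda_0(t)\norm{\tv_{-\tS}\odot\beta_{-\tS}}_1\le 2\lambda_0(t)\tilde\gamma\norm{v_{-S}\odot\beta_{-S}}_1\le 2\lambda\norm{v_{-S}\odot\beta_{-S}}_1$, using $\tv\le\tilde\gamma v$ (Definition~\ref{vtv.d.9}), the vanishing of $v$ on $\tS\setminus S$, and the hypothesis $\lambda\ge\tilde\gamma\lambda_0(t)$.

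It then remains to do the deterministic bookkeeping. Splitting $\TV_k(\hf)=\norm{(D^k\hf)_S}_1+\norm{(D^k\hf)_{-S}}_1$, applying the subgradient inequality for $\norm{\cdot}_1$ on $S$ with $q_S=\sgn((D^kg)_S)$ and the triangle inequality on $-S$, moving $2\lambda\norm{(D^k\hf)_{-S}}_1$ to the left and combining it with the $v$-weighted term above, one is led to
\begin{multline*}
\norm{(\hf-f^0)_{\N^{\perp}_k}}_2^2/n + 2\lambda\Bigl(\sum_{m=1}^s(q_S)_{t_m}\beta_{t_m}+\norm{(1-v)_{-S}\odot\beta_{-S}}_1\Bigr)\\
\le \norm{g-f^0_{\N^{\perp}_k}}_2^2/n + 4\lambda\norm{(D^kg)_{-S}}_1 + 2\bigl(\sigma\sqrt{2x/n}+\sigma\sqrt{ks/n}\bigr)\norm{h}_2/\sqrt n .
\end{multline*}
By the symmetry $f\mapsto-f$ in Definition~\ref{vtv.d.2.11}, applied to $f=-\sqrt n\,h/\norm{h}_2$, the parenthesised quantity on the left is at least $-\Gamma_{D^k}(S,v_{-S},q_S)\norm{h}_2/\sqrt n$. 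Substituting this, using $\norm{h}_2/\sqrt n\le\norm{(\hf-f^0)_{\N^{\perp}_k}}_2/\sqrt n+\norm{g-f^0_{\N^{\perp}_k}}_2/\sqrt n$, and solving the resulting quadratic inequality in $\norm{(\hf-f^0)_{\N^{\perp}_k}}_2/\sqrt n$ — a routine convexity manipulation as in \citet{vand19} — gives the claimed bound; intersecting the two events yields the probability $1-e^{-x}-e^{-t}$.

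The step I expect to be the main obstacle is the stochastic one: pinning down the right space to project onto — the span of $\{\tf^k_{j_1,\ldots,j_d}\}_{(j_1,\ldots,j_d)\in\tS}$ is the natural candidate, because derivative matching makes a single $k^{\rm th}$-order jump activate a whole block of coefficients — and verifying that the antiprojected lengths $\norm{{\rm A}_{\tS}\tf^k_{j_1,\ldots,j_d}}_2/\sqrt n$ are indeed dominated by the bound $\tv$ of Lemma~\ref{vtv.l.3.3}, so that $\tilde\gamma$ correctly captures the dictionary correlation and permits the sub-universal tuning $\lambda\ge\tilde\gamma\lambda_0(t)$. Once the noise term is in the shape ``dimension term times $\norm{h}_2/\sqrt n$ plus $2\lambda$ times a $v$-weighted $\ell^1$ norm of $\beta$'', the remaining manipulations are identical in form to the one-dimensional case.
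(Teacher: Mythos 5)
Your route is the intended one: the paper gives no self-contained proof of Theorem \ref{vtv.t.2.3} but imports it from Theorem 2.2 of \citet{vand19} (see also \citet{orte19-2,orte19-4}) ``adapted to have an enlarged active set'', and that is exactly what you transcribe -- basic inequality, idempotence to pass from $\epsilon_{\N^{\perp}_k}$ to $\epsilon$, the \citet{dala17}-type split of the noise into the part projected onto ${\rm span}\{\tf^k_{j_1,\ldots,j_d}:(j_1,\ldots,j_d)\in\tS\}$ and the antiprojections controlled via $\tv$, $\tilde\gamma$ and $\lambda\ge\tilde\gamma\lambda_0(t)$, then the subgradient/effective-sparsity bookkeeping and the quadratic inequality.

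One quantitative point deserves attention. You justify the middle term by a $\chi^2$ bound in a space ``whose dimension is controlled by $\abs{\tS}$'', but here $\abs{\tS}\le k^d s$ (each jump is enlarged to a $k^d$-block), so that argument yields $\sigma\sqrt{k^d s/n}$, i.e.\ $\sigma\sqrt{\tilde s/n}$ as in Theorem \ref{vtv.t.2.4}, rather than the $\sigma\sqrt{ks/n}$ you (and the statement) write; the two coincide only for $d=1$, which is the count inherited from \citet{vand19}. This is a discrepancy between the stated constant and the projection space actually used (it does not affect the structure of the proof or the rates, since $k^d=\bigo(1)$), but as written your chi-square step does not deliver the displayed $\sqrt{ks/n}$ for $d\ge2$, $k\ge2$ -- either replace that term by $\sigma\sqrt{\tilde s/n}$ or exhibit a projection space of dimension $ks$ with antiprojections still dominated by the bound of Lemma \ref{vtv.l.3.3}.
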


\begin{theorem}[Oracle inequality with slow rates]\label{vtv.t.2.4}
Let $g \in \Rd$ and $S \subseteq \ttimes{i \in [d]} [k+2:n_i-k]$ be arbitrary.
For $x,t>0$, choose $\lambda \ge {\tilde \gamma} \lambda_0(t)$.
Then, with probability at least $1-e^{-x}-e^{-t}$, it holds that
$$\norm{(\hf-f^0)_{\N^{\perp}_k}}^2_2 /n \le \norm{g- f^0_{\N^{\perp}_k}}^2_2 /n + 4 \lambda \norm{D^k g}_1 + \left( \sigma \sqrt{\frac{2x}{n}} + \sigma \sqrt{\frac{\tilde s}{n}}  \right)^2.$$
\end{theorem}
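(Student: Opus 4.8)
The plan is to derive Theorem~\ref{vtv.t.2.4} as the tensor incarnation of the slow-rate (``non-adaptive'') half of Theorem~2.2 in \citet{vand19}. Two reductions, both already used in this paper, make the transfer legitimate. First, although $\epsilon_{\N^\perp_k}={\rm P}_{\N^\perp_k}\epsilon$ has correlated entries, the idempotence and self-adjointness of the projection give $\sum(\epsilon_{\N^\perp_k}\odot f)=\sum(\epsilon\odot f_{\N^\perp_k})$ for every tensor $f$, so one may work with the genuinely i.i.d.\ $\N(0,\sigma^2)$ noise $\epsilon$ as long as all test tensors are restricted to $\N^\perp_k$ -- exactly the manoeuvre in the proof of Theorem~\ref{vtv.t.3.1}. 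Second, by the synthesis form of Subsection~\ref{vtv.ss.2.3}, $\hf_{\N^\perp_k}=\sum_j\hat\beta_j\tf^k_j$ with $j$ ranging over $\ttimes{i\in[d]}[k+1:n_i]$ and $\hat\beta$ a Lasso whose design is the dictionary $\{\tf^k_j\}$; this places us in the standard analysis-estimator framework, whose design correlation is exactly what the bound on the antiprojections $\tilde v$ (Definition~\ref{vtv.d.2.7}) and the inverse scaling factor $\tilde\gamma=\norm{\tilde v_{-\tS}}_\infty$ (Definition~\ref{vtv.d.2.8}) control.

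With these reductions in hand I would run the usual argument. The defining optimality of $\hf_{\N^\perp_k}$ (its basic inequality), after substituting $Y=f^0+\epsilon$ and rearranging, gives
$$\norm{(\hf-f^0)_{\N^\perp_k}}_2^2/n\le\norm{(g-f^0)_{\N^\perp_k}}_2^2/n+\tfrac2n\langle\epsilon,(\hf-g)_{\N^\perp_k}\rangle+2\lambda\TV_k(g)-2\lambda\TV_k(\hf),$$
where $\langle\cdot,\cdot\rangle$ is the entrywise inner product. Writing $(\hf-g)_{\N^\perp_k}=\sum_j(D^k(\hf-g))_j\tf^k_j$ in synthesis form, let $V_{\tS}:={\rm span}\{\tf^k_j:j\in\tS\}$ and split the noise functional along $\tS$: let $u={\rm P}_{\tS}(\hf-g)_{\N^\perp_k}$, a vector in the at most $\tilde s$-dimensional space $V_{\tS}$, and $w=({\rm I}-{\rm P}_{\tS})(\hf-g)_{\N^\perp_k}=\sum_{j\notin\tS}(D^k(\hf-g))_j\,({\rm I}-{\rm P}_{\tS})\tf^k_j$. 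For $w$, Hölder's inequality, the $\ell^1$-triangle inequality $\norm{D^k(\hf-g)}_1\le\TV_k(\hf)+\TV_k(g)$, and $\norm{({\rm I}-{\rm P}_{\tS})\tf^k_j}_2/\sqrt n\le\tilde v_j\le\tilde\gamma$ for $j\notin\tS$ give, after a union bound over the at most $n$ centred Gaussians $\langle\epsilon,({\rm I}-{\rm P}_{\tS})\tf^k_j\rangle$ (so that $\max_{j\notin\tS}\abs{\langle\epsilon,({\rm I}-{\rm P}_{\tS})\tf^k_j\rangle}\le n\,\tilde\gamma\lambda_0(t)$ with probability $\ge1-e^{-t}$),
$$\tfrac2n\abs{\langle\epsilon,w\rangle}\le2\,\tilde\gamma\lambda_0(t)\,(\TV_k(\hf)+\TV_k(g))\le2\lambda(\TV_k(\hf)+\TV_k(g))$$
by $\lambda\ge\tilde\gamma\lambda_0(t)$. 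For $u$, Cauchy--Schwarz together with the Gaussian concentration $\norm{{\rm P}_{\tS}\epsilon}_2\le\sigma(\sqrt{\tilde s}+\sqrt{2x})$ -- which holds with probability $\ge1-e^{-x}$ since $\zeta\mapsto\norm{{\rm P}_{\tS}\zeta}_2$ is $1$-Lipschitz with mean at most $\sigma\sqrt{\tilde s}$ -- bounds $\tfrac2n\abs{\langle\epsilon,u\rangle}$ by $2\bigl(\sigma\sqrt{2x/n}+\sigma\sqrt{\tilde s/n}\bigr)\norm{(\hf-g)_{\N^\perp_k}}_2/\sqrt n$.

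Assembling, the $\TV_k(\hf)$ contributions cancel against $-2\lambda\TV_k(\hf)$, leaving $4\lambda\norm{D^kg}_1$ and a term linear in $\norm{(\hf-f^0)_{\N^\perp_k}}_2$; the triangle inequality $\norm{(\hf-g)_{\N^\perp_k}}_2\le\norm{(\hf-f^0)_{\N^\perp_k}}_2+\norm{(g-f^0)_{\N^\perp_k}}_2$ followed by completing the square against the left-hand side turns the latter into the additive summand $\bigl(\sigma\sqrt{2x/n}+\sigma\sqrt{\tilde s/n}\bigr)^2$, and a union bound over the two concentration events gives probability $1-e^{-x}-e^{-t}$. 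The step I expect to need the most care is exactly this last bookkeeping: retaining coefficient $1$ on $\norm{g-f^0_{\N^\perp_k}}_2^2/n$ and on the noise square, and coefficient $4$ on $\lambda\norm{D^kg}_1$, requires the precise quadratic rearrangement of \citet{vand19} rather than a lossy $2ab\le a^2+b^2$; one must also verify that the i.i.d.-reduction is watertight and that $\dim V_{\tS}\le\tilde s=\abs{\tS}$. The Gaussian tail estimates and the identity $D^k(\hf-g)_{\N^\perp_k}=D^k(\hf-g)$ that lines up the penalties are routine.
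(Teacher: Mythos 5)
Your architecture is the intended one (and is what the paper implicitly invokes when it refers to the non-adaptive bound of Theorem 2.2 in \citet{vand19}): reduce to i.i.d.\ noise by restricting test tensors to $\N^{\perp}_k$, pass to the synthesis form, split the noise functional along ${\rm P}_{\tS}$ and its complement, control the antiprojected part through a union bound using $\norm{({\rm I}-{\rm P}_{\tS})\tf^k_j}_2/\sqrt n\le\tilde\gamma$ and $\lambda\ge\tilde\gamma\lambda_0(t)$, and control the projected part by concentration of $\norm{{\rm P}_{\tS}\epsilon}_2$ around $\sigma\sqrt{\tilde s}$. All of that is sound. The genuine gap is in the last step. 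Starting from the plain basic inequality you display, you arrive at $a^2\le b^2+4\lambda\norm{D^kg}_1+2R(a+b)$ with $a=\norm{(\hf-f^0)_{\N^\perp_k}}_2/\sqrt n$, $b=\norm{g-f^0_{\N^\perp_k}}_2/\sqrt n$, $R=\sigma\sqrt{2x/n}+\sigma\sqrt{\tilde s/n}$, and no ``completing the square'' turns this into the claimed $a^2\le b^2+4\lambda\norm{D^kg}_1+R^2$: completing the square gives $(a-R)^2\le(b+R)^2+4\lambda\norm{D^kg}_1$, which is strictly weaker (e.g.\ with $\norm{D^kg}_1=0$, $b=1$, $R=0.1$ the first display allows $a^2=1.44$ while the claimed bound caps it at $1.01$). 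So the implication you rely on is false, not merely delicate bookkeeping.

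The missing ingredient is the convexity of the criterion. Since $\hf_{\N^\perp_k}$ minimizes a least-squares functional plus a convex penalty, the KKT conditions (equivalently, the polarization/projection argument used throughout \citet{vand19}) yield the \emph{two-point inequality}
\begin{equation*}
\norm{(Y-\hf)_{\N^\perp_k}}_2^2/n+\norm{(\hf-g)_{\N^\perp_k}}_2^2/n\le\norm{(Y-g)_{\N^\perp_k}}_2^2/n+2\lambda\TV_k(g)-2\lambda\TV_k(\hf),
\end{equation*}
i.e.\ your basic inequality strengthened by the extra term $\norm{(\hf-g)_{\N^\perp_k}}_2^2/n$ on the left. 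With this term available, the projected noise contribution $2R\,\norm{(\hf-g)_{\N^\perp_k}}_2/\sqrt n$ is absorbed via $2Rv\le R^2+v^2$ with $v=\norm{(\hf-g)_{\N^\perp_k}}_2/\sqrt n$, the $v^2$ cancels against the new left-hand term, the $\TV_k(\hf)$ contributions cancel as you computed, and the sharp bound with leading constant $1$, the term $4\lambda\norm{D^kg}_1$ and the additive summand $\bigl(\sigma\sqrt{2x/n}+\sigma\sqrt{\tilde s/n}\bigr)^2$ follows on the intersection of your two concentration events. With that replacement your proof is complete; the remaining ingredients (union bound over at most $n$ Gaussians with variance at most $\sigma^2 n\tilde\gamma^2$, $\dim\,{\rm span}\{\tf^k_j:j\in\tS\}\le\tilde s$, and the identity $D^k(\hf-g)_{\N^\perp_k}=D^k(\hf-g)$) are all correct as you state them.
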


\section{Proofs of Section \ref{vtv.ss.2.3}}

\subsection{Proof of Lemma \ref{vtv.l.2.1}}\label{vtv.pl.2.1}
We prove Lemma  \ref{vtv.l.2.1} by induction.\\
{\bf Anchor: $k=1$}\\
Note that $\phi^1_1= \tf^1_1$ and $\phi^1_j- \tf^1_j= \alpha \phi^1_1$ for some $\alpha \in \R$. Therefore $D^1 \phi^1_1= D^1 \tf^1_1=0$ and $D^1(\phi^1_j- \tf^1_j)=0$. It follows that
$$ D^1 \tf^1_j= D^1 \phi^1_j= 1_{\{j'\ge j\}}- 1_{\{j'\ge j-1\}}= 1_{\{j\}}, j \in [2:n].$$\\
{\bf Step: $k-1$ implies $k$}\\
For $j \in [k-1]$ it holds that $D^k \phi^k_j=D^k \tf^k_j= D^k \phi^{k-1}_j= D^k \tf^{k-1}_j= 0$, since by assumption $D^{k-1}\phi^{k-1}_j= D^{k-1}\tf^{k-1}_j =0$ for $j \in [k-1]$.
Moreover
\begin{eqnarray*}
D^k \phi^k_j&=& D^k (\sum_{l \ge j} \phi^{k-1}_l)/n= D^1 (\sum_{l \ge j}D^{k-1} \phi^{k-1}_l)= D^1 \{1_{\{j' \ge j\}}\}_{j' \in [k:n]}\\
&=& \begin{cases} 0, & j =k,\\ 1_{\{j\}}, & j \in [k+1:n].\end{cases}
\end{eqnarray*}
It also holds that $ \phi^k_j- \tf^k_j= \sum_{l \in [k]} \alpha_l \phi_l^l, j \in [k:n]$ for some $\{\alpha_l \in \R\}_{l \in [k]}$ and therefore $D^k \phi^k_j= D^k \tf^k_j, j \in [k:n]$.
\hfill $\square$

\section{Proofs of Section \ref{vtv.s.3}}

\subsection{Proof of Lemma \ref{vtv.l.3.3}}\label{vtv.pl.3.3}

To bound the antiprojections we can use the dictionary $ \Phi^k$ instead of  $\tF^k$. Indeed, by Lemma 28 in \citet{orte19-4}, it holds that
$$\norm{{\rm A}_{\{\tf^k_{t}, t \in \tS\}} \tf^k_j}^2_2 \le \norm{{\rm A}_{\{\phi^k_{t}, t \in \tS\}} \phi^k_j}^2_2, \ j \in [k+1:n].$$

\subsubsection*{Bound on the antiprojections for $d=1$}

We first prove that, for $m=1, \ldots, s$,
$$ \norm{{\rm A}_{\tS} \tf^k_j}^2_2/n\le  \begin{dcases} \left(\frac{t_{m}-j}{n}  \right)^{2k-1}, & j \in R_{m}^-=[t_m^-:t_{m}],\\
 0, & j \in R_{m}^0=[t_{m}:t_{m}+k-1],\\
\left(\frac{j-t_{m}-k+1}{n}  \right)^{2k-1}, & j \in R_{m}^+=[t_{m}+k-1:t_{m}^+].\\
\end{dcases}$$
We then extend the reasoning to general dimension $d$.

For any $m \in [s]$, we  fix $j \in R^-_m$ and approximate $\phi^k_j$ by $\phi^k_{t_m}, \ldots, \phi^k_{t_m+k-1}$. By the definition of $\Phi^k$ we have that
$$ \phi^k_j(j')= n^{-k+1}(j'-j+1)^{k-1} 1_{\{j' \ge j\}}, \ j' \in [n].$$
Moreover note that for $k' \in \{0,1,\ldots, k-1\}$
\begin{equation}\label{vtv.e.3.5}
 \sum_{l=0}^{k'} (-1)^l \binom{k'}{l}  \phi^{k}_{t_m+l}= n^{-k'} \phi^{k-k'}_{t_m}= n^{-k+1} \{(j'-t_m+1)^{k-k'-1} 1_{\{j' \ge t_m\}}\}_{j'\in [n]}.\end{equation}
We now express $\phi^k_j$ as the sum of a linear combination of $\phi^k_{t_m}, \ldots,  \phi^k_{t_m+k-1}$ and a remainder. The linear combination will approximate the projection of $\phi^k_j$ onto $\{\phi^k_j, j \in \tS\}$, while the remainder will be an upper bound for the antiprojections.

For all $j'\in [n]$ it holds that
$$ \phi^k_j(j')= n^{-k+1}(j'-j+1)^{k-1}(1_{\{j\le j' \le t_m-1\}} +1_{\{j' \ge t_m\}}).$$
By the binomial theorem
\begin{eqnarray*}
&&(j'-t_m+1+t_m-j)^{k-1}1_{\{j' \ge t_m\}}\\
&&= \sum_{l=0}^{k-1} \binom{k-1}{l} (t_m-j)^{k-l-1} (j'-t_m+1)^l1_{\{j'\ge t_m\}}\\
&&= \sum_{l=0}^{k-1} \binom{k-1}{l} (t_m-j)^{k-l-1} n^l \phi^{l+1}_{t_m}.
\end{eqnarray*}
By Equation \eqref{vtv.e.3.5} we know that $\{\phi^{l+1}_{t_m}\}_{l\in [0:k-1]} \in {\rm span}(\{\phi^k_{t_m+l}\}_{l \in [0:k-1]})$.

Therefore, for $j \in R_m^-$,
\begin{eqnarray*}
 \norm{{\rm A}_{\tS} \tf^k_j}^2_2 &\le& n^{-2k+2} \sum_{j'=j}^{t_m-1} (j'-j+1)^{2k-2}\le n^{-2k+2} \int_{0}^{t_m-j} (j')^{2k-2} \ dj'\\
 &\le&  \frac{(t_m-j)^{2k-1}}{(2k-1)n^{2k-2}} \le n \left(\frac{t_m-j}{n}\right)^{2k-1}.
\end{eqnarray*}

Note that the construction of the partially orthonormalized dictionary $\tF^k$ can of course also be made starting from the collection of functions $\{1_{\{j \le j'\}} \}_{j \in [n]}, j' \in [n]$ instead of  $\{1_{\{j \ge j'\}} \}_{j \in [n]}, j' \in [n]$, cf. Definition  \ref{vtv.d.2.5}. The resulting dictionaries $\tF^k$ coincide, up to permutation of the column indices. As a consequence, the calculation we  showed to approximate $\norm{{\rm A}_{\tS} \tf^k_j}^2_2$ for $j \in R^-_m$ can be carried out with the dictionary $\tF^k$ based on  $\{1_{\{j \le j'\}} \}_{j \in [n]}, j' \in [n]$ to obtain the approximation
$$\norm{{\rm A}_{\tS} \tf^k_j}^2_2\le n \left(\frac{j-t_m-k+1}{n}\right)^{2k-1} ,  \ j \in R^+_m.$$ This consideration also applies in higher-dimensional situations. 

\subsubsection*{Bound on the antiprojections for general dimension $d$}

By the same reasons as above, we consider without loss of generality $(k_1, \ldots, k_d) \in R^{-, \dots, -}_m$.
We decompose $\phi^k_{k_1, \ldots, k_d}$ as follows
$$ \phi^k_{k_1, \ldots, k_d}(j_1, \ldots, j_d)= n^{-k+1} \prod_{i=1}^d (a_i(j_i)+ b_i(j_i)),$$
for $j_i \in [n_i], i \in [d]$, where
\begin{align*}
a_i=a_i(j_i)&= (j_i-k_i+1)^{k-1} 1_{\{k_i \le j_i \le t_{i,m}-1\}},\\
b_i=b_i(j_i)&= (j_i-k_i+1)^{k-1} 1_{\{ j_i \ge t_{i,m}\}},\\
c_i=c_i(j_i)&= (j_i-k+1)^{k-1} 1_{\{ j_i \ge k\}} \ge a_i+ b_i.
\end{align*}
Note that $a_i,b_i$ depend on $t_{i,m}$, while $c_i$ does not. Moreover, for all  $(l_1, \ldots, l_d) \in [0,k-1]^d$ it holds that $ \ttimes{i \in [d]} \{t_{i,m+l_i}\} \in \tS$.
Thus, we approximate
$$ \norm{{\rm A}_{\tS} \tf^k_{k_1, \ldots, k_d}}^2_2 \le n^{-2k+2} \sum_{1, \ldots, 1}^{n_1, \ldots, n_d} \left( \prod_{i=1}^d (a_i+b_i)- \prod_{i =1}^d b_i \right)^2,$$
since by Equation \eqref{vtv.e.3.5} the contributions of $\prod_{i =1}^d b_i$ are spanned by $\phi^k_S$.
Note that $\prod_{i=1}^d (a_i+b_i)- \prod_{i =1}^d b_i$ is nonzero on
$$ \ttimes{i \in [d]} [k_i:n_i] \setminus \ttimes{i \in [d]} [t_{i,m}: n_i] \subseteq \cup_{i \in [d]} \left( [k_i:t_{i,m}-1] \times \ttimes{l \not=i} [1:n_l]   \right).$$
Moreover, on $[k_i:t_{i,m}-1] \times \ttimes{l \not=i} [1:n_l] $, it holds that $\prod_{i=1}^d (a_i+b_i)- \prod_{i =1}^d b_i \le a_i \prod_{l \not=i} c_l$.
Therefore
$$ \norm{{\rm A}_{\tS} \tf^k_{k_1, \ldots, k_d}}^2_2 \le n^{-2k+2} \sum_{i=1}^d \sum_{1, \ldots, 1}^{n_1, \ldots, n_d} \left( a_i^2(j_1) \prod_{l \not=i} c_l^2(j_l) \right).$$
As in the one-dimensional case, $ n_i^{-2k+2}\sum_{j_i=1}^{n_i} a_i^2(j_i)\le n_i \left(\frac{t_i-k_i}{n_i} \right)^{2k-1}$ and $ n_i^{-2k+2}\sum_{j_i=1}^{n_i} c_i^2(j_i)\le n_i $. It follows that
$$ \norm{{\rm A}_{\tS} \tf^k_{k_1, \ldots, k_d}}^2_2 \le n \sum_{i=1}^d \left(\frac{t_i-k_i}{n_i} \right)^{2k-1}.$$

Note that as soon as $j_i \in R_{i,m}^0$ for some coordinate $i \in [d]$, then $a_i(j_i)=0$ and the $i^{\rm th}$ coordinate does not contribute to the antiprojections. The bounds for all other hyperrectangles $R^z_m, z \in \{-,0,+\}^d$ follow by analogous calculations. \hfill $\square$

\subsection{Proof of Lemma \ref{vtv.l.3.4}}\label{vtv.pl.3.4}
For any $m \in [s]$ and for any $(j_1, \ldots, j_d) \in R_m$ it holds that
\begin{eqnarray*}
\sqrt { \sum_{l=1}^d {\tilde v}^2_{i,m}(j_i)}&\le& \sum_{l=1}^d  {\tilde v}_{i,m}(j_i) \le 
\sum_{l=1}^d  { v}_{i,m}(j_i)  \left( { \max\{d_{i,m}^-, d_{i,m}^+ \} \over n_i}\right)^{\frac{2k-1}{2}}\\
&\le& \sum_{l=1}^d  { v}_{i,m}(j_i)  \sqrt{ \sum_{i=1}^{d} \left( {\max\{d_{i,m}^-, d_{i,m}^+ \} \over n_i}\right)^{{2k-1}}}\le v_{j_1, \ldots, j_d} {\tilde \gamma} .
\end{eqnarray*} \hfill $\square$

\subsection{Proof of Lemma \ref{vtv.l.3.5}}\label{vtv.pl.3.5}
Fix $i \in [d]$ and $m \in [s]$. Say $q_{t_m}=1$.  Since $w_{i,l,m} \in [0,1], l\not=i$, for any $j_i \in R_{i,m}^- \cup R_{i,m}^0 \cup R_{i,m}^+  $  it holds that
$$  \prod_{l =1}^d w_{i,l,m}(j_l) \le   \left( 1- \frac{\sqrt{v_{i,m}(j_i)}}{C} \right) \prod_{l \not= i} w_{i,l,m}(j_l) \le \left( 1- \frac{\sqrt{v_{i,m}(j_i)}}{C} \right).$$
Moreover, for any $(j_1, \ldots, j_d) \in R_{m}$  it holds that
\begin{eqnarray*}
w_{j_1, \ldots, j_d}&=&\frac{1}{d}\sum_{i=1}^d \prod_{l =1}^d w_{i,l,m}(j_l)\\
&\le&   \frac{1}{d} \sum_{i=1}^d \left( 1- \frac{\sqrt{v_{i,m}(j_i)}}{C} \right)= 1- \sum_{i=1}^d \frac{\sqrt{v_{i,m}(j_i)}}{d C}= 1-v_{j_1, \ldots, j_d}.
\end{eqnarray*}
Analogous expressions hold if $q_{t_m}=-1$.
The claim follows by  noting that the conditions of the definition of interpolating tensor (Definition \ref{vtv.d.2.12}) are satisfied for $w$. \hfill $\square$

\subsection{Matching derivatives}\label{vtv.match.derivatives}

To obtain continuous vectors with $k-1$ continuous derivatives and piecewise constant $k^{\rm th}$ derivative, we split $[0,1]$ into $N_{\omega}$, resp. $N_{\rw}$, intervals of equal length, where $N_{\omega}=k$, $N_{\rw}=k+1$ if $k$ is odd and $N_{\rw}=k+2$ if $k$ is even. We denote these intervals by $\{[x_{l-1}, x_l]\}_{l=1}^{N_{\{\omega, \rw\}}}$ with $x_0=0$ and $x_{N_{\{\omega, \rw\}}}=1$. We choose
$$
\omega(x)=\begin{dcases}
1-a_0x^{\frac{2k-1}{2}}, & x \in [x_0,x_1],\\
b_{l,k}x^k+ b_{l,k-1}x^{k-1}+ \ldots + b_{l,1}x+b_{l,0}, & x \in [x_{l-1},x_l],\\
& l \in [2:k-1],\\
c_0(1-x)^k, & x \in [x_{k-1}, x_k].
\end{dcases}$$
We moreover choose 
$$
\rw(x)=\begin{dcases}
1-{\rm a}_0x^{k}, & x \in [x_0,x_1],\\
{\rm b}_{l,k}x^k+ {\rm b}_{l,k-1}x^{k-1}+ \ldots + {\rm b}_{l,1}x+{\rm b}_{l,0}, & x \in [x_{l-1},x_l],\\
&l \in [2:N_{\rw}/2-1],\\
{\rm a}_L (1/2-x)^L+ \ldots + {\rm a}_1(1/2-x)+1/2, & x \in [x_{N_{\rw}/2-1}, x_{N_{\rw}/2+1}],
\end{dcases}$$
where $L=k-1$ if $k$ is even and $L=k$ if $k$ is odd.

We choose both the coefficients ($a_0, a_L, \ldots, a_1, \{b_{l,k}, \ldots, b_{l,0}\}_{l}, c_0$) and (${\rm a}_0, {\rm a}_L, \ldots, {\rm a}_1, \{{\rm b}_{l,k}, \ldots, {\rm b}_{l,0}\}_{l}$) by derivative matching. We require the $k-1$ derivatives of the different pieces of the interpolating polynomials to match at the junctions between the intervals. This gives place to piecewise constant $k^{\rm th}$ derivatives with the exception of the interval $[x_0,x_1]$ where $ \omega^{(k)}(x)\asymp - 1/\sqrt{x}$.

 Matching derivatives for $\omega$ means solving a system of $k(k-1)$ equations and $k(k-1)$ unknowns. Matching derivatives for $\rw$ means solving a system of $k(k/2)$ equations and $k(k/2)$ unknowns when $k$ is even and $k(k-1)/2$ equations and $k(k-1)/2$ unknowns when $k$ is odd.
 We therefore do not need to do any derivative matching for $k=1$, where we just take $\omega(x)=1-\sqrt{x}$ and $\rw(x)=1-x$. 
  
As an alternative to discretizing a continuous version of the interpolating polynomials, one can also proceed by matching discrete differences. The two approaches are equivalent when $\min_{i \in [d]} \min_{m \in [s]}  \min \{d_{i,m}^-, d_{i,m}^+ \} \to \infty$ as $n \to \infty$. Discrete derivative matching requires that the counterpart of each interval $[x_{l-1}:x_l]$ contains at least $k$ points. We therefore require that
 $$ \min\{d_{i,m}^-, d_{i,m}^+ \} \ge (k+2)k, \ \forall i \in [d], \ \forall m \in [s].$$
 We refer to \citet{vand19} for details on discrete derivative matching.

\subsection{Partial integration}\label{vtv.pl.3.6}

Some consequences of the fact that both the resulting $\omega$ and $\rw$ have piecewise constant $k^{\rm th}$ derivatives with the exception of the interval $[0,x_1]$ where $ \omega^{(k)}(x)\asymp - 1/\sqrt{x}$ are shown in the next lemma, which is be useful to compute the bound on the effective sparsity in Lemma \ref{vtv.l.3.7}.

\begin{lemma}[Discrete differences of some polynomials]\label{vtv.l.3.6}
Let for some $d \in {\mathbb N}$, $d \ge 2k$, 
$${\rm q}_j:= (j/d)^{2k-1 \over 2} , \ j= 0 , \ldots , d . $$
Then
$$ n^{-2k+2} \| D^k {\rm q} \|_2^2 = \bigo(  \log (ed)/d^{2k-1}) . $$
Let for some $d \in {\mathbb N}$, $d \ge 2k$, 
$${\rm p}_j:= (j/d)^{k} , \ j= 0 , \ldots , d . $$
Then 
$$ n^{-2k+2}\| D^k {\rm p} \|_2^2 =\bigo(1/d^{2k-1}) . $$
\end{lemma}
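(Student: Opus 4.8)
The plan is to strip the normalization constant carried by $D^k$ (which is exactly cancelled by the prefactor $n^{-2k+2}$) and thereby reduce both assertions to bounds on plain sums of squares of $k^{\rm th}$-order differences. Write $(\Delta^k a)_j:=\sum_{l=0}^k(-1)^l\binom{k}{l}a_{j-l}$, $j\in[k:d]$, for the unnormalized $k^{\rm th}$-order difference of a vector $a=(a_0,\dots,a_d)$. Then $n^{-2k+2}\|D^ka\|_2^2=\sum_{j=k}^d(\Delta^ka)_j^2$, so it suffices to show $\sum_{j=k}^d(\Delta^k{\rm q})_j^2=\bigo(\log(ed)/d^{2k-1})$ and $\sum_{j=k}^d(\Delta^k{\rm p})_j^2=\bigo(1/d^{2k-1})$.

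The case of ${\rm p}$ is immediate. Since ${\rm p}_j=d^{-k}j^k$ is a polynomial of degree exactly $k$ in $j$ with leading coefficient $d^{-k}$, its $k^{\rm th}$ finite difference is the constant $k!\,d^{-k}$, whence $\sum_{j=k}^d(\Delta^k{\rm p})_j^2=(d-k+1)(k!)^2d^{-2k}\le (k!)^2/d^{2k-1}=\bigo(1/d^{2k-1})$.

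For ${\rm q}$ I would use the integral representation of finite differences. Set $g(x):=d^{-(2k-1)/2}x^{(2k-1)/2}$ so that ${\rm q}_j=g(j)$; on $(0,\infty)$ the function $g$ is smooth with $g^{(k)}(x)=c_k\,d^{-(2k-1)/2}x^{-1/2}$, where $c_k=(2k-1)!!/2^k$, using that $(2k-1)/2-k=-1/2$. Iterating the fundamental theorem of calculus, for a function that is $C^k$ on an interval containing $[m-k,m]$ one has $(\Delta^kg)(m)=\int_{[0,1]^k}g^{(k)}(m-t_1-\cdots-t_k)\,dt_1\cdots dt_k$. When $j\ge k+1$, all arguments lie in $[j-k,j]\subseteq[1,d]$, away from the singularity of $g^{(k)}$, and since $x\mapsto x^{-1/2}$ is decreasing this gives $|(\Delta^k{\rm q})_j|\le c_k\,d^{-(2k-1)/2}(j-k)^{-1/2}$. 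The single remaining term $j=k$ I would dispatch by hand, e.g. $|(\Delta^k{\rm q})_k|\le 2^k(k/d)^{(2k-1)/2}$. Squaring and summing,
$$\sum_{j=k}^d(\Delta^k{\rm q})_j^2\le 4^k\Big(\frac kd\Big)^{2k-1}+c_k^2\,d^{-(2k-1)}\sum_{i=1}^{d-k}\frac1i=\bigo\!\left(\frac{\log(ed)}{d^{2k-1}}\right),$$
because $\sum_{i=1}^{d-k}1/i\le\log(ed)$ and $d\ge 2k$; this is the first assertion.

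The only subtle point, and the one I expect to be the main obstacle, is the endpoint $j=k$: there the grid reaches $x=0$, where $x\mapsto x^{(2k-1)/2}$ fails to be $C^k$, so the clean integral bound is unavailable and this one term must be treated separately (it contributes only $\bigo(d^{-(2k-1)})$, so it is harmless). It is precisely the divergent harmonic sum $\sum_{i\le d-k}1/i$, coming from the $x^{-1/2}$ blow-up of $g^{(k)}$ near the jump, that produces the extra $\log(ed)$ and distinguishes ${\rm q}$ from ${\rm p}$ — the latter having a genuine, bounded (indeed constant) $k^{\rm th}$ derivative.
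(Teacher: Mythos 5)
Your proposal is correct, and it follows the same overall skeleton as the paper: cancel the $n^{k-1}$ normalization against the prefactor, prove the pointwise decay $\abs{(\Delta^k{\rm q})_j}=\bigo\bigl(d^{-\frac{2k-1}{2}}j^{-1/2}\bigr)$, and let the resulting harmonic sum produce the $\log(ed)$. The technical device for the pointwise bound is different, though. The paper factors out $(j/d)^{\frac{2k-1}{2}}$, Taylor-expands $(1-l/j)^{\frac{2k-1}{2}}$ to order $k-1$, uses that $k^{\rm th}$-order differences annihilate degree-$(k-1)$ polynomials in $l$, and bounds the Taylor remainder by $\bigo((l/j)^k)$; you instead invoke the exact iterated-integral representation $(\Delta^k g)(j)=\int_{[0,1]^k}g^{(k)}(j-t_1-\cdots-t_k)\,dt$ together with the monotonicity of $g^{(k)}(x)\asymp d^{-\frac{2k-1}{2}}x^{-1/2}$, which gives the same estimate without any expansion or annihilation step. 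Two further points where your write-up is arguably tighter: you handle ${\rm p}$ exactly (its $k^{\rm th}$ difference is the constant $k!\,d^{-k}$, so the bound is immediate), whereas the paper only remarks that ``the same arguments go through''; and you isolate the boundary term $j=k$, where the integral representation (and likewise the paper's remainder bound, which formally needs $l/j\le 1/2$) is not directly applicable, and show it contributes only $\bigo(d^{-(2k-1)})$. Both routes are valid; yours is more self-contained, the paper's extends verbatim to the discrete-derivative-matching setting it has already set up.
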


\begin{proof}
We have for $j \ge k$
\begin{eqnarray*}
 n^{-2k+2}(D^k \rmq)_j& =& \sum_{l=0}^k { k \choose l} (-1)^l \left(\frac{j-l}{d}\right)^{2k-1 \over 2}\\
 &=& \left(\frac{j}{d}\right)^{2k-1 \over 2} \biggl [ \sum_{l=0}^k {k \choose l} (-1)^l \biggl (1- { l \over j} \biggr )^{2k-1\over 2} \biggr ] . 
\end{eqnarray*} 
We do a $(k-1)$-term Taylor expansion of $x \mapsto (1- x)^{2k-1 \over 2} $ around $x=0$: 
$$ (1- x)^{2k-1 \over 2} =\sum_{i=0}^{k-1} a_i x^i + {\rm rem} (x) ,$$
where $a_0=1$, $a_1= -{2k-1 \over 2} , \ldots, a_{k-1}$ are the coefficients of the
Taylor expansion and where the remainder ${\rm rem} (x)$ satisfies 
$$ \sup_{0 \le x \le 1/2 } | {\rm rem} (x) | = \bigo\left( | x |^k\right) . $$
Thus
 \begin{equation*}
 \sum_{l=0}^k { k \choose l} (-1)^l\biggl (1- { l \over j} \biggr )^{2k-1\over 2}  = \sum_{l=0}^k { k \choose l} (-1)^l \left( \sum_{i=0}^{k-1} a_i \biggl( { l\over j } \biggr)^i +  {\rm rem} \biggl( {l \over j} \biggr) \right),
\end{equation*} 
where
$$ \sum_{l=0}^k { k \choose l} (-1)^l \sum_{i=0}^{k-1} a_i \biggl( { l\over j } \biggr)^i=0$$
since
$$  \left\{ \sum_{i=0}^{k-1} { a_i  } \biggl ( { l\over j } \biggr )^i  \right\}_{l=0}^{k}$$
is a polynomial of degree $k-1$ and hence its $k^{\rm th}$-order differences are zero.
 It follows that for $j \ge k$, 
$$ \biggl | \sum_{l=0}^k { k\choose l} (-1)^l\biggl (1- { l \over j} \biggr )^{2k-1\over 2}\biggr | \le 
 \sum_{l=0}^k  { k \choose l} \biggl | {\rm rem} \biggl ( {l \over j} \biggr ) \biggr |    = \bigo \left( {1 \over j^k }\right) .$$
Then for $j \ge k$, $ n^{-2k+2}(D^k \rmq)_j  = \bigo \left( 1/({j^{\frac{1}{2}} d^{\frac{2k-1}{2}}})\right) $.
 So $n^{-2k+2}\norm{D^k\rmq}^2_2 = \bigo \left( {\log(ed)}/{ d^{{2k-1}}}\right) $.
 
For $\rp$ the same arguments go through. We obtain that $(D^k\rp)_j=\bigo(1/d^{k})$ and so $n^{-2k+2}\norm{D^k\rp}^2_2 = \bigo \left( 1/{ d^{{2k-1}}}\right)$.

 \end{proof}

 \subsection{Proof of Lemma \ref{vtv.l.3.7}}\label{vtv.pl.3.7}

We prove a bound on the effective sparsity holding for every sign configuration. We eliminate the dependence on the sign configuration by decoupling partial integration on the whole interpolating tensor ($\norm{(D^k)'w}^2_2$) into taking $k^{\rm th}$-order differences on the hyperrectangles  $\{R_m\}_{m =1}^s$ ($\norm{D^k w(R_m)}^2_2$, where $w(R_m)=\{w_{j_1, \ldots, j_d}\}_{(j_1, \ldots, j_d)\in R_m}$ denotes the restriction of the interpolating tensor $w$ to the set of indices $R_m$). 

To do this, we define the boundaries $B(R_m)$ of a rectangle $R_m$ as
$$ B(R_m):= R_m \setminus \ttimes{i \in [d]} [t_{i,m}^-+k: t_{i,m}^+-k] .$$

It holds that
$$ n^{-2k+1} \norm{(D^k)'w}^2_2= \bigo \left( \sum_{m=1}^s \left( n^{-2k+1}\norm{D^k w(R_m)}^2_2 + \norm{w(B(R_m))}^2_2 \right) \right).$$
By the definition of the interpolating tensor $w$ it holds that
\begin{eqnarray*}
&&n^{-2k+1}\norm{D^k w(R_m)}^2_2\\
&&= \bigo\left(n^{-2k+1} \sum_{i=1}^d \norm{D^k \ttimes{l \in [d]} w_{l,i,m} }^2_2 \right)\\
&&= \bigo\left(n^{-2k+1} \sum_{i=1}^d \prod_{l=1}^d \norm{D^k  w_{l,i,m} }^2_2 \right)\\
&&= \bigo\left( \sum_{i=1}^d \prod_{l=1}^d \left( n_l^{-2k+1}\norm{D^k  w_{l,i,m}^- }^2_2 + \sum_{j_l=t_{l,m}-k}^{t_{l,m}-1} (1-w_{l,i,m}^-(j_l))^2\right. \right. \\
&& \left. \left. + n_l^{-2k+1}\norm{D^k  w_{l,i,m}^+ }^2_2 + \sum_{j_l=t_{l,m}+k}^{t_{l,m}+2k-1} (1-w_{l,i,m}^+(j_l))^2 \right)\right),\\
\end{eqnarray*}
where the sums stem from the differences involving the constant part of $w$ on $R^0_{l,m}$.
Because of the form chosen for $\omega$ and $\rw$, it holds that
$$ \sum_{j_l=t_{l,m}-k}^{t_{l,m}-1} (1-w_{l,i,m}^-(j_l))^2 
= \begin{cases} \bigo(\omega^2(1/d_{i,m}^-)) \\ \bigo(\rw^2(1/d_{l,m}^-)) \end{cases} 
= \begin{cases} \bigo(1/(d_{i,m}^-)^{2k-1}), & l = i, \\ \bigo(1/(d_{l,m}^-)^{2k}), & l \not= i. \end{cases}$$
A similar bound holds for $\sum_{j_l=t_{l,m}+k}^{t_{l,m}+2k-1} (1-w_{l,i,m}^+(j_l))^2$.
By Lemma \ref{vtv.l.3.6} it holds that
$$ n_l^{-2k+1}\norm{D^k  w_{l,i,m}^- }^2_2 = \begin{cases} \bigo(\log (e d_{i,m}^-)/(d_{i,m}^-)^{2k-1}), & l = i, \\ \bigo(1/(d_{l,m}^-)^{2k-1}), & l \not= i. \end{cases}$$
A similar bound holds for $n_l^{-2k+1}\norm{D^k  w_{l,i,m}^+ }^2_2$.

We now just have to upper bound the contributions of the boundaries $B(R_m)$. For $k=1$, $w(B(R_m))=0$, for all $m \in [s]$ and the boundaries do not contribute to the effective sparsity. For $k\ge 2$ it holds that
$$ \sum_{B(R_m)} w^2_{j_1, \ldots, j_d}= \bigo \left( \sum_{i=1}^d \sum_{ B(R_m)} \prod_{l=1}^d w^2_{l,i,m}(j_l) \right)= \bigo \left( \sum_{i=1}^d \sum_{z \in \{-,+\}^d}  \frac{1}{(d_m^{z})^{2k-1}}\right)$$
since all the contributions on the boundaries have the same dependence on $k$ and we can approximate the volume of the boundaries by the sum of the volume of the 
$2^d$ fractions $\{R_m^{z}\}_{ z \in \{-,+\}^d}$ of the hyperrectangle.

It therefore holds that
\begin{equation*}
n^{-2k+1} \norm{(D^k)'w}^2_2=\bigo \left(  \left(\sum_{i=1}^d \log( e d_{i, \max}(S))\right) \sum_{m=1}^s  \sum_{z \in \{-,+\}^d} \frac{1}{(d_m^z)^{2k-1}} \right)
\end{equation*}
and the claim follows.

\hfill $ \square$

\section{Proofs of Section \ref{vtv.s.4}}

\subsection{Proof of Lemma \ref{vtv.l.4.1}}\label{vtv.pl.4.1}

\subsubsection*{Setting}
To calculate the inverse scaling factor when the active set is an enlarged mesh grid $\tS$, we decompose a dictionary atom -- which is a product of sums -- into a sum of products. Some of the components will be spanned by the dictionary atoms indexed by the mesh grid. The remaining components will contribute to the antiprojection. 

By Lemma 28 in \citet{orte19-4} we can look at the dictionary atoms ${\phi^k_{j_1, \ldots, j_d}}$ instead of ${\tf^k_{j_1, \ldots, j_d}}$, see also the proof of Lemma \ref{vtv.l.3.3} in Appendix \ref{vtv.pl.3.3}.

We therefore consider
$$ \phi^k_{j_1, \ldots, j_d}= \phi^k_{j_1} \times \ldots \times \phi^k_{j_d},$$
where, for $i \in [d]$
$$ \phi^k_{j_i}= n_i^{-k+1} (j-j_i+1)^{k+1} 1_{\{j \ge j_i\}}.$$

\subsubsection*{Projection of the mesh grid on single coordinates}
Now choose $z_{i,l} \in Z_i(l)$ such that $j_i \le z_{i,1} \le \ldots \le z_{i,d-1} \le z_{i,d}$.
By the definition of the mesh grid we can choose $z_{i,l} \in Z_i(l)$ such that
\begin{itemize}
\item $\abs{j_i-z_{i,1}} = \bigo( n_i/s^{\frac{1}{H(d)}})$;
\item $\abs{z_{i,l}-z_{i,l-1}} = \bigo( n_i/s^{\frac{1}{lH(d)}})$, $l \in [2:d] $;
\item $\abs{z_{i,d}}\le n_i $.
\end{itemize}

\subsubsection*{The decomposition}
We now decompose the factors into sums:
$$ \phi^k_{j_i}= \sum_{l=0}^{d} u_{i,l},$$
where, for $j \in [n_i]$,
\begin{align*}
u_{i,0}&:= 1_{\{  j \in[j_i: z_{i,1}-1]\}} n_i^{-k+1} (j-j_i+1)^{k-1},\\
u_{i,l}&:= 1_{\{ j \in [z_{i,l}:z_{i,l+1}-1] \}} n_i^{-k+1} (j-j_i+1)^{k-1}, & l \in [1:d-1],\\
u_{i,d}&:= 1_{\{ j \in [z_{i,d}:n_i] \}} n_i^{-k+1} (j-j_i+1)^{k-1},\\
\end{align*}
Note that $\{u_{i,l}\}_{l=0}^d$ are mutually orthogonal.

Thanks to the decomposition of the factors, the following decomposition of the dictionary atom $\phi^k_{j_1, \ldots, j_d}$ holds:
$$ \phi^k_{j_1, \ldots, j_d}= \sum_{(l_1, \ldots, l_d)\in [0:d]^d} \prod_{i=1}^d u_{i,l_i},$$
where $\{\prod_{i=1}^d u_{i,l_i}\}_{(l_1, \ldots, l_d)\in [0:d]^d}$ are mutually orthogonal. We therefore obtain a decomposition of a product of sums into a sum of products.

\subsubsection*{Partitioning the decomposition}
We now partition $\{(l_1, \ldots, l_d)\in [0:d]^d\}$ into two subsets: $\Sigma$ and $\Sigma^c$.
Define
$$ \Sigma:= \{(l_1, \ldots, l_d) \in [0:d]^d: \abs{\{i \in [d]: l_i\le z \}} \le z, \forall z \in [0:d]\}.$$
This means that $\Sigma$ contains tuples $(l_1, \ldots, l_d)$ having at most $d$ entries with value at most $d$ {\it and} at most $d-1$ entries with value at most $d-1$ {\it and} ... {\it and} at most $1$ entry with value at most $1$ {\it and} no entry with value $0$.

\subsubsection*{Connecting the decomposition with the enlarged mesh grid}
We now want to show that, for any $(l_1, \ldots, l_d) \in \Sigma$, $\prod_{i=1}^d u_{i,l_i}$ can be obtained as a linear combination of $\{\phi^k_{j_1, \ldots, j_d}\}_{ (j_1, \ldots, j_d)\in \tS }$. These components will approximate the projection of any $\phi^k_{j_1, \ldots, j_d}$ onto the linear span of $\{\phi^k_{j_1, \ldots, j_d}\}_{(j_1, \ldots, j_d)\in \tS }$.

For $l_i\in [1:d-1]$ it holds that
$$ u_{i,l_i}(j)= 1_{\{z_{i,l_i}\le j \}} n_i^{-k+1} (j-j_i+1)^{k-1} - 1_{\{z_{i,l_i+1}\le j \}} n_i^{-k+1} (j-j_i+1)^{k-1}.$$
In analogy to the proof of Lemma \ref{vtv.l.3.3} (use the binomial theorem and Equation \eqref{vtv.e.3.5}) it holds that $u_{i,l_i} \in {\rm span} (\{\phi^k_{z_{i,l_i}+h}\}_{h=0}^{k-1} \cup \{\phi^k_{z_{i,l_i+1}+h}\}_{h=0}^{k-1} )$.

For $l_i \in [d]$ it holds that $u_{i,d} \in {\rm span} (\{\phi^k_{z_{i,l_i}+h}\}_{h=0}^{k-1})$

\subsubsection*{We need a claim}
We now show that
$$ (l_1, \ldots, l_d) \in \Sigma \Longrightarrow (l_1',\ldots, l_d') \in \Sigma,$$
where $l_i' \ge l_i, \forall i \in [d]$ by proving that
$$ (l_1, \ldots, l_d) \in \Sigma \Longrightarrow (l_1,\ldots,l_{d-1}, l_d+1) \in \Sigma,$$
where without loss of generality we choose the index $l_d$ and assume that $l_d\le d-1$.

As a consequence it will follow that, for any $(l_1, \ldots, l_d) \in \Sigma$, $\prod_{i=1}^d u_{i,l_i}$ can be obtained as a linear combination of $\{\phi^k_{j_1, \ldots, j_d}\}_{ (j_1, \ldots, j_d)\in \tS }$.

We now prove the claim: assume that $(l_1, \ldots, l_d)\in \Sigma$, i.e., $\abs{\{i \in [d]: l_i\le z \}}\le z, \forall z \in [0:d]$. Take $(l_1', \ldots, l_d')$ as $l_i'=l_i, i \in [d-1]$ and $l_d'=l_d+1$. Then
\begin{eqnarray*}
\abs{\{i \in [d]: l'_i\le z \}}&=& \abs{\{i \in [d-1]: l_i\le z \}}+ 1_{\{z \ge l_d+1 \}}\\
&\le & z - 1_{\{z \ge l_d \}} + 1_{\{z \ge l_d+1 \}}\le z.
\end{eqnarray*}
Therefore $(l_1', \ldots, l_d')\in \Sigma$ and the claim is proved.

\subsubsection*{Approximating the antiprojections}
Thanks to the above claim and to the mutual orthogonality of the elements of $\{\prod_{i=1}^d u_{i,l_i}\}_{(l_1, \ldots, l_d)\in [0:d]^d}$, we can approximate as follows:
$$ \norm{{\rm A}_{\tS} \phi^k_{j_1, \ldots, j_d}}^2_2/n \le \sum_{(l_1, \ldots, l_d)\not\in \Sigma} \norm{\prod_{i=1}^d u_{i,l_i}}^2_2/n = \sum_{(l_1, \ldots, l_d)\not\in \Sigma} \prod_{i=1}^d \norm{ u_{i,l_i}}^2_2/n_i.$$
Now we use the following property:
$$ \norm{ u_{i,l_i}}^2_2/n_i= \bigo(s^{-\frac{2k-1}{(l_i+1)H(d)}} ).$$
The larger $l_i$, the larger the contribution of $ \norm{u_{i,l_i}}^2_n$.

It therefore only remains to find the order of the largest contribution(s) indexed by $\Sigma^c$.
A tuple of indices in $\Sigma^c$ giving the contribution highest in order is
$$ (d-1, \ldots, d-1).$$
It holds that
$$ \norm{{\rm A}_S\psi_{j_1, \ldots, j_d}}^2_2/n = \bigo\left(\prod_{i=1}^d \norm{u_{i,l_i}}^2_n \right)= \bigo( s^{-\frac{2k-1}{H(d)}}).$$
Since the upper bound does not depend on $(j_1, \ldots, j_d)$ we read directly that
$$ {\tilde \gamma} = \bigo \left(s^{- \frac{2k-1}{2H(d)}} \right).$$
 \hfill $\square$

\end{document}